\documentclass{amsart}

\usepackage{amsfonts,amsthm,amsmath,amssymb}
\usepackage{mathtools,verbatim,mathrsfs,bbm}

\usepackage[dvipsnames]{xcolor}
\usepackage{soul}
\usepackage{hyperref}
\usepackage[capitalise]{cleveref} 
\usepackage{enumitem,tabularx,array,booktabs}
\usepackage{ulem}
\usepackage{bibentry}
\normalem

\makeatletter
\@namedef{subjclassname@2020}{
  \textup{2020} Mathematics Subject Classification}
\makeatother

\newtheorem{theorem}{Theorem}[section]

\newtheorem{lemma}[theorem]{Lemma}
\newtheorem{claim}{Claim}
\newtheorem{proposition}[theorem]{Proposition}

\newtheorem{corollary}[theorem]{Corollary}
\newtheorem{question}{Question}

\newtheorem*{theorem*}{Theorem}
\newtheorem*{maintheorem*}{Main Theorem}
\newtheorem*{lemma*}{Lemma}
\newtheorem*{techlemma*}{Technical Lemma}
\newtheorem*{claim*}{Claim}
\newtheorem*{conjecture*}{Conjecture}
\newtheorem*{question*}{Question}
\newtheorem*{corollary*}{Corollary}
\newtheorem*{maincorollary*}{Main Corollary}
\newtheorem*{idea*}{Idea}
\newtheorem*{fact*}{Fact}

\theoremstyle{plain} 
\newcommand{\thistheoremname}{}
\newtheorem*{genericthm}{\thistheoremname}
\newenvironment{namedthm}[1]
  {\renewcommand{\thistheoremname}{#1}%
   \begin{genericthm}}
  {\end{genericthm}}

\theoremstyle{plain}
\newcounter{MainTheoremCounter}

\newtheorem{Maintheorem}[MainTheoremCounter]{Theorem}

\theoremstyle{definition}
\newtheorem{definition}[theorem]{Definition}
\newtheorem{remark}[theorem]{Remark}
\newtheorem{example}[theorem]{Example}

\newcommand{\N}{\mathbb{N}}
\newcommand{\Z}{\mathbb{Z}}
\newcommand{\R}{\mathbb{R}}
\newcommand{\C}{\mathbb{C}}
\newcommand{\Q}{\mathbb{Q}}

\newcommand{\T}{\mathbb{T}}

\newcommand{\folner}{F{\o}lner}

\newcommand{\eps}{\epsilon}

\newcommand{\bz}{\text{Bohr$_0$}}
\newcommand{\bzs}{\text{Bohr$_0^*$}}

\newcommand{\id}{\text{Id}}

\newcommand{\defeq}{\vcentcolon=}

\newcommand{\diam}{\text{diam}}

\newcommand{\dyret}{\mathfrak{R}}

\newcommand{\rotpi}{\rotatebox[origin=c]{180}{$\pi$}}
\newcommand{\secquest}{Related}

\newcommand\restr[2]{{ \left.\kern-\nulldelimiterspace #1 \right|_{#2}}}

\definecolor{orange}{rgb}{1,0.5,0}

\title{On Katznelson's Question for skew product systems}

\author[D.\ Glasscock]{Daniel Glasscock}
\address[Daniel Glasscock]{Mathematical Sciences Department \\ University of Massachusetts Lowell\\
Lowell, MA, USA}
\email{daniel\textunderscore glasscock@uml.edu}

\author[A.\ Koutsogiannis]{Andreas Koutsogiannis}
\address[Andreas Koutsogiannis]{Department of Mathematics, Aristotle University of Thessaloniki, Thessaloniki, 54124, Greece}
\email{akoutsogiannis@math.auth.gr}

\author[F.\ K.\ Richter]{Florian~K.~Richter}
\address[Florian Karl Richter]{Institute of Mathematics\\ {\'E}cole Polytechnique F{\'e}d{\'e}rale de Lausanne (EPFL)\\
Lausanne, Vaud, Switzerland}
\email{f.richter@epfl.ch}

\begin{document}

\begin{abstract}
    Katznelson's Question is a long-standing open question concerning recurrence in topological dynamics with strong historical and mathematical ties to open problems in combinatorics and harmonic analysis.
    In this article, we give a positive answer to Katznelson's Question for certain towers of skew product extensions of equicontinuous systems, including systems of the form $(x,t) \mapsto (x + \alpha, t + h(x))$. 
    We describe which frequencies must be controlled for in order to ensure recurrence in such systems, and we derive combinatorial corollaries concerning the difference sets of
    syndetic subsets of the natural numbers.
\end{abstract}

\subjclass[2020]{Primary: 37B05; Secondary: 37B20, 05B10}

\keywords{Bohr sets, Bohr$_0$ sets, syndetic sets, Bohr recurrence, topological recurrence, skew products, equicontinuous systems, isometric extensions, Kronecker factor.}

\maketitle
\tableofcontents

\clearpage

\section{Introduction}

Recurrence is a central topic in the theory of dynamical systems that concerns the fundamental question of how and when a point or set recurs to its initial position.
This paper addresses Katznelson's Question, a long-standing open problem concerning recurrence in topological dynamics with strong historical and mathematical ties to open problems in combinatorics and harmonic analysis.

\subsection{Main results}
\label{sec_overview}

A \emph{topological dynamical system} (henceforth, a \emph{system}) is a pair $(X,T)$, where $(X,d_X)$ is a compact metric space and $T: X \to X$ is a continuous map. A set $R \subseteq \N$ of positive integers is a \emph{set of recurrence for the system $(X,T)$} if there exists a point $x \in X$ that returns arbitrarily closely to its initial position at times in $R$, that is, $\inf_{n \in R} d_X(x,T^nx) = 0$. The set $R$ is a \emph{set of topological recurrence} if it is a set of recurrence for all systems.
Because the phase space $X$ of any system $(X,T)$ is compact, it is easy to see, for example, that $\N$ is a set of recurrence.
More involved examples include the set of positive differences $\{b-a \ | \ a,b\in E,~b>a\}$  of any infinite set $E \subseteq \N$ and the set of squares $\{n^2 \ | \ n\in\N\}$.\footnote{Both sets are known more generally to be \emph{sets of measurable recurrence}; see \cite[Theorems 3.16 and 3.18]{furstenberg_book_1981}.}
Sets that are not sets of recurrence include, for example, all sets that are not \emph{divisible} and all sets that are \emph{lacunary}.\footnote{A set $S\subseteq\N$ is \emph{divisible} if for all $q\in\N$, there exists $s\in S$ such that $s\equiv 0\pmod q$. Writing $S=\{s_1 < s_2<\ldots\}$, the set $S$ is \emph{lacunary} if it has ``exponential growth'' in the sense that $\liminf_{k\to\infty}s_{k+1}/s_k>1$.}

The simplest examples of non-trivial systems are rotations of the $d$-dimensional torus $\T^d = \R^d / \Z^d$, given by $T: x \mapsto x + \alpha$, where $\alpha \in \T^d$.  A \emph{set of Bohr recurrence} is a set of recurrence for all finite-dimensional toral rotations.  
By definition, any set of topological recurrence is also a set of Bohr recurrence. Since rotations on finite-dimensional tori comprise a narrow subclass of topological dynamical systems, one is led to expect that sets of topological recurrence comprise a narrow subclass of the sets of Bohr recurrence. The extent to which this is true remains an important unsolved problem, one that was popularized in the dynamics community by Katznelson \cite{KatznelsonChromaticNumber2001}. This question -- and its various equivalent formulations to which we turn in a moment -- is the main subject of our study.

\begin{namedthm}{Katznelson's Question}
    Is every set of Bohr recurrence a set of topological recurrence?
\end{namedthm}

Not only is Katznelson's Question open, there seems to be no consensus among experts as to the expected answer.
There are very few concrete examples of sets which could provide a negative answer: sets which are known to be sets of Bohr recurrence but whose other dynamical recurrence properties are unknown (see Grivaux and Roginskaya \cite{Grivaux_Roginskaya_2013} and Frantzikinakis and McCutcheon \cite[Future Directions]{frantzikinakis_mccutcheon_recurrence_2012}).
The situation does not look more promising in the opposite direction: a positive answer to Katznelson's Question is known only in a few special cases. For example, it was shown recently in \cite{HostKraMaass2016} that sets of Bohr recurrence are sets of recurrence for nilsystems, a class of systems of algebraic origin that generalize rotations on tori.

A natural next step towards a resolution of Katznelson's Question is to consider skew product extensions of equicontinuous systems.
In the structure theory of topological dynamical systems initiated by Furstenberg and Veech, such systems represent a single step up in complexity from toral rotations (see \cite{glasner_structure_theory_2000}). 
The 2-torus transformation $(x,y) \mapsto (x+\alpha, y+h(x))$, where $h\colon\T\to\T$ is continuous, is a simple example of a skew product extension of the 1-torus rotation $x \mapsto x+ \alpha$ for which Katznelson's Question has thus far been unresolved. Our main contribution in this paper is a positive answer to Katznelson's Question for a class of towers of skew product extensions over equicontinuous systems that includes this example and others.

\begin{Maintheorem}
\label{maintheorem_katznelson_for_skew_towers}
Every set of Bohr recurrence is a set of recurrence for skew product systems of the form $(X \times \T^{d}, T_{\vec h})$, where
\[T_{\vec h}(x, t_1, \ldots, t_d) \defeq \big(Tx, t_1 + h_1(x), t_2 + h_2(t_1), \ldots, t_d + h_d(t_{d-1}) \big),\]
the system $(X,T)$ is equicontinuous, and $h_1: X \to \T$, $h_2, \ldots, h_d: \T \to \T$ are continuous maps.
\end{Maintheorem}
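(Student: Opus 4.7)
The plan is to reduce the recurrence question to explicit Bohr-type conditions on $n$ via Fourier analysis, and then to handle the resulting polynomial-in-$n$ conditions by invoking the Host--Kra--Maass theorem on Bohr recurrence for nilsystems. First, I would invoke the structure theory of equicontinuous systems to reduce, without loss of generality, to the case that $(X,T)$ is a rotation on a compact metric abelian group $G$ by an element $g_0$. It then suffices to find, for each $\epsilon > 0$, some $n \in R$ and some starting point $(x_0, 0, \ldots, 0) \in X \times \T^d$ such that $d_X(T^n x_0, x_0) < \epsilon$ and each coordinate of $T_{\vec h}^n(x_0, 0, \ldots, 0) - (x_0, 0, \ldots, 0)$ lies within $\epsilon$ of $0$ modulo $1$.

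\textbf{Explicit iterates via Fourier.} Writing out $T_{\vec h}^n(x_0, 0, \ldots, 0) = (T^n x_0, t_1^{(n)}, \ldots, t_d^{(n)})$, the fiber coordinate $t_k^{(n)}$ is an iterated Birkhoff sum up the tower. I would decompose each $h_k$ (for $k \ge 2$) as $h_k(t) = w_k t + c_k + \tilde h_k(t)$, with winding number $w_k \in \Z$, mean $c_k \in \R$, and continuous mean-zero zero-winding residual $\tilde h_k \colon \T \to \R$; and similarly decompose $h_1$ using the character theory of $G$. Each $t_k^{(n)}$ then splits into a polynomial-in-$n$ main term (involving binomial coefficients $\binom{n}{j}$ characteristic of a $d$-step nilsystem iteration) plus Fourier-residual Birkhoff sums. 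Approximating each $\tilde h_k$ uniformly by a trigonometric polynomial $\tilde h_k^{(M)}$, the truncated system's return-time conditions reduce to a finite list of Bohr-type expressions $\|P_j(n) \theta_j\|$, for polynomials $P_j$ and frequencies $\theta_j$ depending on $M$.

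\textbf{Applying Host--Kra--Maass and closing the argument.} By the Host--Kra--Maass theorem, every set of Bohr recurrence is a set of recurrence for nilsystems, and so the truncated tower's finitely many polynomial Bohr conditions can be simultaneously satisfied by some $n \in R$. The Fourier-truncation error is uniformly small and mean-zero, so by unique ergodicity of the base rotation its Birkhoff sums are $o(n)$ uniformly in the starting point. I would then quantify this $o(n)$ decay in terms of the Bohr conditions already imposed, choose $M$ sufficiently large depending on $\epsilon$, and find a single $n \in R$ delivering both $d_X(T^n x_0, x_0) < \epsilon$ and negligible drift in every fiber coordinate.

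\textbf{Main obstacle.} The central difficulty is the coupling between the Fourier truncation order $M$ and the magnitude of $n$: the $L^\infty$ approximation error $\|\tilde h_k - \tilde h_k^{(M)}\|_\infty$ produces a Birkhoff-sum error up to $n\,\|\tilde h_k - \tilde h_k^{(M)}\|_\infty$, which need not be small merely because $M$ is large, since Bohr recurrence furnishes no a~priori bound on $n$. Bootstrapping through the $d$ levels of the tower compounds the issue, because perturbations at level $k-1$ shift the orbit along which one sums at level $k$. I expect the fix to require either a stronger inductive hypothesis that provides additional frequency-adapted Bohr control on $n$, or an argument exploiting that mean-zero zero-winding continuous residuals over a minimal abelian rotation are approximable by continuous coboundaries, so that their Birkhoff sums telescope to something small precisely when $T^n x_0 \approx x_0$. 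Executing this delicate balancing act, level by level, is where the bulk of the technical work will lie.
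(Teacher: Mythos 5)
Your proposal correctly identifies the shape of the problem (decomposing each $h_k$ into winding, mean, and mean-zero residual pieces, and tracking polynomial-in-$n$ main terms up the tower), but the gap you flag in your final paragraph is real and, as formulated, fatal to the Fourier-truncation route. Approximating a residual $\tilde h_k$ uniformly to error $\delta$ produces a Birkhoff-sum error of size up to $n\delta$, and Bohr recurrence furnishes no bound whatsoever on the $n \in R$ that will work; so there is no way to choose $M$ (hence $\delta$) in terms of $\epsilon$ alone. Your proposed coboundary rescue does not close the gap either. While mean-zero trigonometric polynomials are exact coboundaries and are sup-norm dense among mean-zero continuous functions, the leftover error $r = \tilde h_k - (g\circ T - g)$ still contributes a Birkhoff sum of order $n\|r\|_\infty$: the telescoping helps only with the coboundary part, and the Gottschalk--Hedlund theorem (cf.\ Remark \ref{rmk_gottschalk_hedlund_connection} in the paper) shows that for the systems under consideration a general mean-zero continuous residual is \emph{not} itself a continuous coboundary, so some nonzero error term persists at every truncation level. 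The unique-ergodicity $o(n)$ bound you invoke is asymptotic in $n$ with constants that depend on the truncation error, so it does not defeat the coupling between $M$ and $n$.

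The paper avoids this entirely by not attempting any uniform control on Birkhoff sums. The key lemma (Proposition~\ref{prop_ap_sequences_near_mean_along_bz_set}) observes that if $m$ is an $(\epsilon/2)$-almost period of a real-valued almost periodic sequence $f$ with mean $\beta$, then the function $n \mapsto \sum_{i=0}^{m-1} f(n+i)$ takes $\epsilon$-steps and has mean $m\beta$, so by a discrete intermediate-value argument there exists \emph{some} starting index $n$ at which the $m$-step Birkhoff sum lies within $\epsilon$ of $m\beta$. This is purely existential in $n$, which is exactly what you need since a single good starting point $x = T^{n}x_0$ suffices to witness recurrence. The Bohr-zero constraint then lands on $m$ (it must be a simultaneous almost period of $f$, an $\epsilon$-return for the base, and satisfy $\|m\beta\| < \epsilon/4$), not on any truncation parameter. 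The inductive step (Theorem~\ref{theorem_dynamical_inductive_step}) then uses the winding number of $h_d$ directly: if it is nonzero, the IVT produces an exact zero of the cocycle on the fiber; if it is zero, the mean-value theorem for integrals produces a fiber point where the lifted Birkhoff sum equals exactly $m\gamma$, with $\gamma$ the mean of the lift, so one need only add $\|m\gamma\| < \epsilon$ to the Bohr constraint. Because the entire $\T$-fiber exhibits recurrence simultaneously (the skew product commutes with rotation in the fiber), the argument iterates cleanly through the tower. No appeal to Host--Kra--Maass, to Fourier truncation, or to any quantitative equidistribution is needed; the only inputs are Lemma~\ref{lemma_equicontinuous_return_times}, the almost periodicity of observables in equicontinuous systems, and elementary real analysis.
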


Katznelson's Question asks whether or not recurrence along a set $R \subseteq \N$ is guaranteed by ensuring recurrence along $R$ in \emph{all} finite dimensional toral rotations.
Thus, a positive answer to Katznelson's Question for the system $(X,T)$ begs the finer question: \emph{which} rotations suffice to ensure recurrence along $R$ in the system $(X,T)$?
In the course of our investigation, we identify the frequencies that participate in the recurrence behaviour of towers of skew product extensions of the form described in \cref{maintheorem_katznelson_for_skew_towers}. 
Surprisingly, we find that in such systems, it is not enough to control for the frequencies inherent to the base equicontinuous system. This finding is in contrast with the behaviour previously observed in other types of systems for which Katznelson's Question has been answered in the affirmative, such as nilsystems \cite{HostKraMaass2016}. 

More precisely, the following theorem demonstrates that in addition to the frequencies inherent to the base equicontinuous system, it is necessary to control for new frequencies introduced by the extensions to ensure recurrence. In particular, new frequencies can be introduced even when the extensions do not increase the size of the largest equicontinuous factor of the system.
For skew-product extensions of equicontinuous systems, the new frequencies introduced are the means of the skewing functions, as described in more detail in \cref{remark_details_on_hidden_freq}.

\begin{Maintheorem}
\label{maintheorem_hidden_frequencies}
There exists an irrational toral rotation $(\T, T)$ and a continuous map $h: \T \to \T$ for which the skew product system $(\T^2,T_h)$,
\[T_h(x,t) \defeq \big(Tx, t + h(x) \big),\]
satisfies the following:
\begin{enumerate}
    \item \label{item_minimal_and_correct_kronecker} $(\T^2,T_h)$ is minimal and its largest equicontinuous factor is $(\T,T)$;
    \item there exists a set of recurrence for $(\T,T)$ that is not a set of recurrence for $(\T^2,T_h)$.
\end{enumerate}
\end{Maintheorem}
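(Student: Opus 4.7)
The plan is to exhibit $\alpha \in \T$, a continuous $h \colon \T \to \T$, and a set $R \subseteq \N$ with the required properties. The starting point is the cocycle identity $T_h^n(x,t) = (x+n\alpha,\, t + S_n(x))$, where $S_n(x) := \sum_{k=0}^{n-1} h(x+k\alpha)$, which reduces $T_h$-recurrence at $(x_0,t_0)$ along $R$ to the joint conditions $n\alpha \to 0$ and $S_n(x_0) \to 0$ in $\T$ along a subsequence of $R$. Writing $h = \beta + \phi$ with $\beta := \int h$ and $\int \phi = 0$, unique ergodicity of $T_\alpha$ gives
\[
  S_n(x) = n\beta + \Sigma_n(x), \qquad \Sigma_n(x) := \sum_{k=0}^{n-1}\phi(x+k\alpha) = o(n) \text{ uniformly in } x.
\]
This identifies $\beta$ as a ``hidden'' leading-order frequency governing $T_h$-recurrence, which is the key to the construction.

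First I would take $\alpha$ irrational with continued-fraction convergents $p_k/q_k$ whose denominators $q_k$ grow so fast that the cohomological equation over $T_\alpha$ admits no continuous solutions for a judicious small-variation function, and then pick $\phi \in C(\T;\R)$ of bounded variation with $\int\phi=0$, $\mathrm{Var}(\phi) < 1/4$, and \emph{not} a continuous coboundary for $T_\alpha$. The Denjoy--Koksma inequality then supplies the crucial uniform bound
\[
  |\Sigma_{q_k}(x)| \le \mathrm{Var}(\phi) < 1/4 \qquad \text{for every } x \in \T \text{ and every } k \in \N.
\]
Next, by a standard Cantor-type construction, I would select $\beta \in \T$ with $\{1,\alpha,\beta\}$ $\Q$-linearly independent and such that $q_{k_j}\beta \to 1/2$ in $\T$ along some subsequence $k_j$, and then set $h := \beta + \phi \pmod 1$ and $R := \{q_{k_j}\}$.

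With these choices the recurrence properties of $R$ follow by direct computation: $q_{k_j}\alpha \to 0$ in $\T$ shows that $R$ is a set of recurrence for $(\T, T_\alpha)$; and conversely, for any $(x_0,t_0)\in\T^2$ and any subsequence $m_j$ of $R$, the identity $S_{m_j}(x_0) = m_j\beta + \Sigma_{m_j}(x_0)$ together with $m_j\beta \to 1/2$ and $|\Sigma_{m_j}(x_0)| < 1/4$ forces $S_{m_j}(x_0) \bmod 1 \in (1/4,3/4)$ for all large $j$, bounded away from $0$ in $\T$. Hence $T_h^{m_j}(x_0,t_0) \not\to (x_0,t_0)$, so $R$ fails to be a set of recurrence for $T_h$.

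The main obstacle is the structural claim (1) of the theorem: that $T_h$ is minimal with largest equicontinuous factor $(\T, T_\alpha)$. By Furstenberg's classical theory of Anzai skew products, this reduces to the non-solvability in continuous functions of the cohomological equations $k h(x) - c = g(x+\alpha) - g(x)$ for every nonzero $k \in \Z$ and every $c$ of the form $m + \ell\alpha$ with $m,\ell \in \Z$. Arranging this generic condition on $\phi$ jointly with suitable Liouville growth of $\alpha$ and with the small-variation constraint $\mathrm{Var}(\phi) < 1/4$ is the delicate part of the argument; the freedom to tune the continued-fraction expansion of $\alpha$ after $\phi$ has been fixed should be enough to secure all the conditions simultaneously.
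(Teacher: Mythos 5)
Your strategy for part (2) is essentially the same as the paper's — decompose $h = \beta + \phi$ with $\beta = \int h$ and $\int\phi = 0$, take $R$ among denominators $q_k$ of the convergents of $\alpha$, bound the fluctuating part of the Birkhoff sums uniformly, and choose $\beta$ so that $q_{k_j}\beta$ stays near $1/2$ — but your use of the Denjoy--Koksma inequality is a genuinely cleaner packaging of the estimate. The paper instead fixes a specific polynomial $\tilde H(x) = x^4 - 2x^3 + x^2 - 1/30$, proves a bespoke Riemann-sum bound (\cref{lemma_basic_facts_about_H}), and compares $\sum_i H(x + i\alpha)$ with $\sum_i H(x + i/m)$ by a coprimality/permutation argument. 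Your one-line Denjoy--Koksma bound $|\Sigma_{q_k}(x)| \le \mathrm{Var}(\phi) < 1/4$ subsumes this at the cost of slightly fewer explicit constants, and the Fatou-style selection of $\beta$ (which the paper also uses, in \cref{lemma_selection_of_beta_from_fatou}) is routine. So far so good.

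The genuine gap is that you have not proved part (1), and the place you wave your hands is precisely where the real work is. Identifying the reduction to ``$e^{2\pi i k h}$ is not cohomologous to a constant via a continuous transfer function, for every $k \ne 0$'' is correct and does control both minimality and the Kronecker factor. But your closing sentence — that ``the freedom to tune the continued-fraction expansion of $\alpha$ after $\phi$ has been fixed should be enough'' — points in the wrong direction. Making $\alpha$ \emph{more} Liouville does not make it easier to avoid the coboundary condition; on the contrary, for a smooth mean-zero $\phi$ and a sufficiently Liouville $\alpha$ the small-divisor series converges and $\phi$ \emph{is} a continuous coboundary, so the system would be conjugate to a rotation and its Kronecker factor would be all of $\T^2$. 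What makes the paper's construction work is a specific incompatibility between the smoothness class of $\phi$ (lifted to $[0,1]$, it is $C^\infty$ but its third derivative does not match at the endpoints) and the Liouville growth of $\alpha$; this is exactly the hypothesis of the Hellekalek--Larcher criterion (\cref{theorem_Hellekalek_Larcher_sufficient_condition_for_minimality}), which the paper invokes as a black box, and then the Kronecker-factor half of part (1) is carried out separately via the regional-proximality argument in \cref{theorem_condition_for_beta_skew_to_be_minimal} under the hypothesis that $m \mapsto \|H_m - m\beta\|_\infty$ is unbounded. To repair your proposal you need to replace ``generic small-variation $\phi$'' by a $\phi$ with some built-in cohomological obstruction compatible with the chosen $\alpha$ — for example, adopt the paper's polynomial (its total variation is $1/8 < 1/4$, so it is consistent with your Denjoy--Koksma bound) and cite Hellekalek--Larcher for non-coboundary, or supply an alternative proof of the non-coboundary condition for a concrete $(\phi, \alpha)$ pair. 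As written, part (1) is a statement of intent, not a proof.
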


There are several next steps suggested by Theorems~\ref{maintheorem_katznelson_for_skew_towers} and~\ref{maintheorem_hidden_frequencies}; we record many of them as open problems in \cref{sec_open_questions}. Questions \ref{question_basic_skew_over_2_step_nil} and \ref{question_more_general_isometric_ext} in \cref{sec_next_steps} feature some simple examples of extensions of equicontinuous systems for which an answer to Katznelson's Question is still not known. An understanding of general isometric extensions -- ones which generalize skew product extensions -- from the point of view of recurrence would represent a major step toward resolving Katznelson's Question for general distal systems. It would also direct attention toward weak mixing systems at the other end of the dynamical spectrum as the next class to analyze from this perspective.

Katznelson's Question and its relatives were considered in equivalent, combinatorial forms long before they were popularized in dynamical terms. A subset of $\N$, respectively $\Z$,  is \emph{syndetic} (more traditionally, \emph{relatively dense}) if finitely many of its translates cover $\N$, respectively $\Z$.  A \emph{Bohr neighborhood of zero} is a set of the form
\begin{align}
\label{eqn_bohr_neighborhood_of_zero}
    \big\{ n \in \Z \ \big| \ \| n \alpha \| < \delta \big\}, \quad \alpha \in \T^d, \ \delta > 0,
\end{align}
where $\| \cdot \|$ denotes the Euclidean distance to zero on $\T^d$.  Bohr neighborhoods of zero and their translates are syndetic sets that generate the \emph{Bohr topology} on $\Z$, the coarsest topology on the integers with respect to which all trigonometric polynomials are continuous. Katznelson's Question is equivalent to the following one, in the sense that a positive answer to one yields a positive answer to the other.

\begin{namedthm}{Katznelson's Question (combinatorial form)}
    If $A \subseteq \N$ is syndetic, does the set of pairwise differences
    \[A-A \defeq \big\{ a_1-a_2 \ \big| \ a_1, a_2\in A
    \big\}
    \]
contain a Bohr neighborhood of zero?
\end{namedthm}

As with the dynamical formulation, there are only a handful of special cases in which a positive answer is known.  We show in \cref{theorem_two_coloring_Katznelson}, for example, that if two translates of $A$ cover $\N$, then $A-A$ contains 
$d \Z=\{dn:n\in\Z\}$, a (periodic) Bohr neighborhood of zero.

Katznelson's Question also finds a useful formulation in terms of $1$-torus-valued sequences. We demonstrate the equivalence between Katznelson's Question and the following one in \cref{sec_comb_forms_of_katznelson}.

\begin{namedthm}{Katznelson's Question (sequential form)}
    Is it true that for all $f: \Z \to \T$ and all $\eps > 0$, the set
    \begin{align}
    \label{eqn_set_in_sequences_question}
        \big\{ m \in \Z \ | \ \inf_{n \in \Z} \| f(n+m) - f(n) \| < \eps \big\}
    \end{align}
    contains a Bohr neighborhood of zero?
\end{namedthm}

A sequence $f: \Z \to \T$ is \emph{Bohr almost periodic on $\Z$} if for all $\eps > 0$, the set
    \[\big\{ m \in \Z \ | \ \sup_{n \in \Z} \| f(n+m) - f(n) \| < \eps \big\}\]
contains a Bohr neighborhood of zero. By definition, the sequential form of Katznelson's Question has a positive answer for almost periodic sequences on $\Z$; \cref{maintheorem_combinatorial_form} shows that the question has a positive answer for those sequences $f$ whose discrete derivative $\Delta_1 f(n) = f(n+1) - f(n)$ is almost periodic. In fact, the result applies more generally to any sequence that becomes almost periodic after finitely many discrete derivatives.  It also provides a class of syndetic subsets of $\N$ whose pairwise differences contain a Bohr neighborhood of zero; see \cref{example_application_of_thm_c}.

\begin{Maintheorem}
\label{maintheorem_combinatorial_form}
Let $f: \Z \to \T$ and $k \in \N \cup \{0\}$. If the $k^{\text{th}}$ discrete derivative of $f$, $\Delta_1^k f$, is Bohr almost periodic, then for all $\eps > 0$, the set
\[A \defeq \big\{ n \in \N \ \big| \ \| f(n) - f(0) \| < \eps \big\}\]
is syndetic and its set of pairwise differences, $A-A$, contains a Bohr neighborhood of zero.  In particular, for any such $f$ and any $\eps>0$, the set in \eqref{eqn_set_in_sequences_question} contains a Bohr neighborhood of zero.
\end{Maintheorem}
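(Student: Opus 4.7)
My approach will be to encode $f$ as a coordinate of an orbit in a tower of skew products of the form required by \cref{maintheorem_katznelson_for_skew_towers}, and then deduce both parts of the statement from that theorem together with distality. (The case $k=0$, where $f$ itself is Bohr almost periodic, follows directly from the definition, so I henceforth assume $k \ge 1$.) Setting $g_i \defeq \Delta_1^i f$ for $0 \le i \le k$, so that $g_0 = f$ and $g_k$ is Bohr almost periodic, the standard realization of Bohr almost periodic sequences as continuous coordinate functions on their shift orbit closures in $\T^\Z$ supplies a minimal equicontinuous system $(X,T)$, a point $x_0 \in X$, and a continuous map $H \colon X \to \T$ with $g_k(n) = H(T^n x_0)$. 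I would then form the tower $(Y,S) = (X \times \T^k, T_{\vec h})$ from \cref{maintheorem_katznelson_for_skew_towers} with $h_1 = H$ and $h_2 = \cdots = h_k = \mathrm{id}_\T$, and set $y_0 \defeq (x_0, g_{k-1}(0), g_{k-2}(0), \ldots, g_0(0))$. The telescoping identities $g_{i-1}(n+1) - g_{i-1}(n) = g_i(n)$ give by induction that $S^n y_0 = (T^n x_0, g_{k-1}(n), \ldots, g_0(n))$ for all $n \in \Z$, so that $f(n) = g_0(n)$ appears as the final coordinate of $S^n y_0$.

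Part~(a) then reduces to distality: since $(Y,S)$ is an iterated isometric extension of the equicontinuous base $(X,T)$, it is distal, so every point, including $y_0$, is uniformly recurrent. Applied to the open neighborhood of $y_0$ given by requiring the last $\T$-coordinate to lie within $\eps$ of $f(0)$, this yields a syndetic-in-$\Z$ subset of $\{n \in \Z : \|f(n) - f(0)\| < \eps\}$, whose intersection with $\N$ is exactly $A$; syndeticity follows. For part~(b) I would argue by contradiction: suppose $A - A$ contains no Bohr neighborhood of zero. Since $A-A$ and every Bohr neighborhood of zero are symmetric about $0$, each Bohr neighborhood of zero must meet $R \defeq \N \setminus (A-A)$, so $R$ is a set of Bohr recurrence. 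By \cref{maintheorem_katznelson_for_skew_towers}, $R$ is then a set of recurrence for $(Y,S)$; provided this recurrence can be anchored at the specific point $y_0$, there exists $n \in R$ with $d_Y(S^n y_0, y_0) < \eps$, hence $\|f(n) - f(0)\| < \eps$ and $n \in A$. Since $0 \in A$ trivially, we obtain $n = n - 0 \in A - A$, contradicting $n \in R$. The ``in particular'' clause follows immediately: any $m = a_1 - a_2 \in A - A$ satisfies $\|f(a_2 + m) - f(a_2)\| = \|f(a_1) - f(a_2)\| < 2\eps$.

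The main technical obstacle is the parenthetical step in part~(b) of anchoring the recurrence at $y_0$. \cref{maintheorem_katznelson_for_skew_towers} as stated guarantees recurrence at \emph{some} point of $Y$, but in distal non-equicontinuous systems pointwise limits $S^{n_j} \to g$ in the Ellis semigroup can be discontinuous, so a priori recurrence at one point does not transfer to another. I would expect to handle this either by inspecting the proof of \cref{maintheorem_katznelson_for_skew_towers} to verify that it delivers recurrence at every minimal point of the tower — plausible since $(Y,S)$ is distal and $y_0$ is therefore a minimal point — or, failing that, by restricting to the minimal subsystem $M \defeq \overline{\{S^n y_0 : n \in \Z\}}$, which is itself a minimal distal (and hence uniquely ergodic) system, and transferring recurrence from a generic point of $M$ to $y_0$ using unique ergodicity together with the abelian Ellis-group structure one expects for compact-group skew products.
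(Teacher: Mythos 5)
Your encoding of $f$ as the last coordinate of an orbit in a tower $(X\times\T^k, T_{h,\id,\ldots,\id})$ matches the paper's (their \cref{claim_bohr_ap_form_for_iterated_difference}), and your syndeticity argument for $A$ is sound. You have also correctly identified the crux: \cref{maintheorem_katznelson_for_skew_towers} gives recurrence at \emph{some} point, not at $y_0$, and bridging this is where the real work lies. However, neither of your two proposed bridges is the one the paper uses, and both have problems. Inspecting the proof of \cref{theorem_skew_extensions_of_equicontinuous_have_kv_property} shows that the returning point $x$ there depends on $m$ (it is chosen as $T^n x_0$ for an $m$-dependent $n$), so the proof does \emph{not} deliver recurrence at a fixed point, minimal or otherwise. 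Restricting to $M=\overline{\{S^n y_0\}}$ runs into a different obstruction: $M$ is not obviously of the form $X' \times \T^d$ with $(X',T')$ equicontinuous, so \cref{maintheorem_katznelson_for_skew_towers} cannot be applied directly to $(M,S)$, and the Ellis-group/unique-ergodicity transfer you gesture at is both harder than needed and not clearly correct here (the enveloping group of a distal compact-group skew product need not be abelian).

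The paper sidesteps the anchoring problem entirely and the missing ingredients are threefold. First, one reduces without loss of generality to the case where $k$ is minimal, i.e.\ $\Delta_1^{k-1}f$ is \emph{not} almost periodic; this is a free move because the conclusion of the theorem does not mention $k$. Second, the dichotomy \cref{theorem_not_minimal_implies_equicontinuous} (not used in your argument) then shows $(X\times\T,T_h)$ is minimal: if it were not, it would be equicontinuous, forcing $\Delta_1^{k-1}f$ to be almost periodic. \cref{lemma_minimality_of_higher_skew_given_base_minimality} lifts this to minimality of the full tower $(X\times\T^k, T_{h,\id,\ldots,\id})$; in particular the closure $M$ is the whole space and the issue of $M$ having the wrong shape disappears. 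Third, one avoids transferring recurrence to $y_0$ altogether: for $U$ the open $\eps$-ball about $y_0$ and $B = \{n : S^n y_0 \in U\}$, minimality gives $\{n : U\cap S^{-n}U\neq\emptyset\} = B-B$, and \cref{lemma_containment_amongst_recurrence_types} yields a $\delta>0$ with $\dyret_\delta(X\times\T^k, T_{h,\id,\ldots,\id}) \subseteq B-B$. Applying \cref{maintheorem_katznelson_for_skew_towers} to the set $\dyret_\delta$ (not to a specific point) shows $B-B$, hence $A-A \supseteq B-B$, is a \bz{} set. This is a genuinely different mechanism from ``anchor recurrence at $y_0$'': recurrence of an arbitrary point into $U$ together with minimality is enough, and no pointwise transfer or contradiction argument is needed.
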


We move next to recount the history behind Katznelson's Question and its relatives.

\subsection{History and context}
\label{sec_history}

A storied theorem of Steinhaus \cite{Steinhaus1920} gives that the set of differences $X-X$ of a set $X \subseteq \R$ of positive Lebesgue measure contains an open neighborhood of zero.  Weil \cite{weil1940} extended the result to locally compact groups with respect to the Haar measure.  It is natural to ponder the extent to which analogues of Steinhaus' result may hold in other settings.  This becomes particularly interesting in the context of the integers, where a natural topology, the Bohr topology, is generated by all sets of the form given in \eqref{eqn_bohr_neighborhood_of_zero}. Thus, the combinatorial form of Katznelson's Question can be understood as an analogue to Steinhaus' result concerning the Bohr topology on $\Z$.

A more historically motivated impetus for Katznelson's Question begins with the work of Bogolyubov \cite{bogolyubov1939}, who was one of the first to explore the relationship between difference sets and Bohr almost periodic functions. 
In the process of giving a new proof of Bohr's characterization of almost periodic functions\footnote{A function $f: \R \to \C$ is \emph{Bohr almost periodic} if for all $\eps > 0$ there exists $L > 0$ such that the set $\{m \in \R \ | \ \sup_{x \in \R} | f(x+m) - f(x) | < \eps \}$ has non-empty intersection with every interval in $\R$ of length at least $L$.} on $\R$ (as those uniformly approximable by trigonometric polynomials), he proved that if $A \subseteq \Z$ has \emph{positive upper asymptotic density}, i.e.,
\[\limsup_{N \to \infty} \frac{\big| A \cap [-N, N] \big|}{2N+1} > 0,\] 
then the set $(A-A)-(A-A)$ contains a Bohr neighborhood of zero. Bogolyubov's work seeded a vast array of generalizations to other settings, including non-abelian, non-amenable, and non-discrete ones; \cref{table_discrete_results} organizes many of the main results in discrete settings.  To focus the narrative in this section, we will concentrate primarily on those results which have advanced our understanding in the integers.

Bogolyubov's consideration of density ties the history of Katznelson's Question inextricably to the history of the following, related question.

\begin{namedthm}{\secquest{} Question}
    If $A \subseteq \Z$ has positive upper asymptotic density, does its set of pairwise differences, $A-A$, contain a Bohr neighborhood of zero?
\end{namedthm}

\noindent K\v{r}\'{i}\v{z} \cite{kriz1987} gave a negative answer to the \secquest{} Question; we recount some of that history in more detail below.  The historical bond between Katznelson's Question and the \secquest{} Question is so tight that it is not possible to recount the history of one without an equal treatment of the other.

\folner{} \cite{folnerbogoliouboff1954, folnernoteonbogoliouboff1954} proved that the set $(A-A)-(A-A)$ contains a Bohr neighborhood of zero for any set $A$ of positive upper Banach density, ie.,
\begin{align}
    \label{eqn_upper_banach_density}
    \limsup_{N \to \infty} \max_{z \in \Z} \frac{\big| A \cap [z+1,z+N] \big|}{N} = \sup_{\lambda} \lambda(A) > 0,
\end{align} 
where the supremum is over the set $\lambda$ of left-translation invariant means (positive linear functionals of norm 1) on the bounded, real-valued functions on $\Z$.\footnote{\label{note_equiv_of_densities} It is shown in \cite[Theorem 2.2a]{peres_application_of_banach_means_1988} that the two quantities in \eqref{eqn_upper_banach_density} are equal for subsets of $\N$. In $\Z$, \folner{}'s result applies to sets of positive ``upper Weyl mean measure,'' which is shown in \cite[Section 3]{bergelson_glasscock_interplay_2020} to be the same as the upper Banach density, even in more general groups and semigroups. Despite the fact that asymptotic density and Banach density are different, it can be shown that when considering the set of differences $A-A$, there is no difference between assuming that $A$ has positive upper asymptotic density and assuming that $A$ has positive upper Banach density; see \cite[Theorem 3.20]{furstenberg_book_1981}.}
\folner{}  also proved that when $A$ has positive upper Banach density, the set $A-A$ contains a Bohr neighborhood of zero up to a set of exceptions of zero Banach density.  Veech \cite[Theorem 4.1]{veech1968}, following \folner{}'s argument, arrived at the same conclusion when $A$ is syndetic; Veech's argument works verbatim for sets of positive upper Banach density. The only apparent difference between \folner{}'s theorem and Veech's is in their definitions of density : \folner{} uses the upper Weyl mean measure, while Veech considers a supremum of the values assigned by translation invariant means. We know today that these those notions of density are exactly the same (cf. footnote~\ref{note_equiv_of_densities}).

Though the combinatorial form of Katznelson's Question and the \secquest{} Question would have been natural to anyone interested in this thread of results, it seems that neither appeared explicitly in print for some time. As far as we know, the combinatorial form of Katznelson's Question appears first in the literature as part of a more general program in Landstad \cite[Page 214]{landstad1971}:
\begin{quote}
    \emph{``For an amenable topological group, let $n$ be the minimal number such that $V^n$ is a Bohr neighbourhood whenever $V$ is a symmetric, relatively dense neighbourhood of $e$. We have seen that in general $n \leq 7$, $n \leq 5$ for abelian groups and $n \leq 4$ for discrete groups. A natural question is whether this number can be reduced for some special groups.''}
\end{quote} The first explicit mention of the \secquest{} Question appears to be due to Ruzsa \cite[Page 18.08]{ruzsa1982}, who attributes the question to personal communication with Flor.

It was Bochner who implicitly, if not explicitly, forged the connection between Bohr almost periodic functions and topological dynamics; see \cite[Chapter 4]{petersen_ergodic_theory} and \cite[Chapter 2]{weiss_single_orbit_book} for modern accounts.
In his characterization of the equicontinuous structure relation, Veech \cite{veech1968} drew a connection between the combinatorial form of Katznelson's Question and recurrence. Ellis and Keynes \cite{elliskeynes1972} and McMahon \cite{mcmahon_weak_disjointness1978} strengthened and generalized that connection by using tools from topological dynamical structure theory to show that $A-A+A-a$ contains a Bohr neighborhood of zero for ``many'' $a \in A$ when $A$ is syndetic. Ellis and Keynes seem to be the first to prove asymmetrical results along these line, showing in particular that the set $A-B+C$ contains a Bohr neighborhood of zero when $A$, $B$, and $C$ are members of the same minimal idempotent ultrafilter. 
More recently, Bergelson and Ruzsa \cite{bergelson_ruzsa_2009} showed that the triple sumset $r \cdot A+s \cdot A+t \cdot A$ contains a Bohr neighborhood of zero when $r$, $s$, and $t$ are integers with $r+s+t=0$ and $A$ is a set of positive upper asymptotic density; stronger results were achieved in \cite{le_le_2021} under the assumption that $A$ is syndetic. Uniformity in the dimension and diameter of Bohr sets contained in triple sumsets was recently demonstrated in broad generality by Bj\"{o}rklund and Griesmer \cite{bjorklund_griesmer_2019}.

Ruzsa \cite{ruzsa1982,ruzsa_difference_sets_unpublished_1985} formulated both the combinatorial form of Katznelson's Question and the \secquest{} Question and improved on Ellis and Keynes' result by showing that $A-A+A-a$ contains a Bohr neighborhood of zero for many $a \in A$ when $A$ is a set of positive upper asymptotic density.  (While \cite{ruzsa_difference_sets_unpublished_1985} was never published, several of the results appear in \cite{hegyvariruzsa2016}.) 
Ruzsa also expounded on a theorem of K\v{r}\'{i}\v{z} \cite{kriz1987}  that answers the \secquest{} Question in the negative: there exists a set of positive upper asymptotic density $A$ whose set of differences $A-A$ does not contain a Bohr neighborhood of zero. 
This result was recently strengthened by Griesmer \cite{griesmer_separating_bohr_denseness_2020}, answering a question in \cite[Page 196]{geroldinger_ruzsa_book_2009}: there exists a set of positive upper asymptotic density $A$ whose set of differences $A-A$ does not contain a translate of any Bohr neighborhood of zero.

The first more recent mention of the combinatorial form of Katznelson's Question in print is found in Glasner \cite{glasner1998}, who connected the problem to fixed points of actions of minimally almost periodic groups.  He shows that for a negative answer to Katznelson's Question, it suffices to construct a minimally almost periodic Polish monothetic topological group that acts with no fixed points by homeomorphisms on a compact space.  For a collection of related problems, see Pestov \cite{pestov_forty_questions2007}.

Katznelson \cite{KatznelsonChromaticNumber2001} was perhaps the first to explicitly formulate the eponymous question as one about recurrence in topological dynamics and is credited for popularizing this question in the dynamics community. Bergelson, Furstenberg, and Weiss \cite{BFW2006} employed tools and techniques from ergodic theory to prove, among other results, an asymmetric result reminiscent of \folner{}'s:  if $A, B \subseteq \Z$ have positive upper Banach density, then $A+B$ contains the intersection of a translate of a Bohr neighborhood of zero with a set containing arbitrarily long intervals; Griesmer \cite{griesmer_sumsets_of_dense_and_sparse_2012} improved on this result by weakening the positive Banach density assumption on one of the sets. Boshernitzan and Glasner \cite{boshernitzanglasner2009} summarized what is known about the Katznelson's Question and other related questions in the framework of dynamics and recurrence, and Huang, Shao, and Ye \cite{huang_shao_ye_nilbohr2016} formulated higher-order analogues of the Katznelson's Question in the framework of nilsystems and nil-Bohr sets.

Some of the most recent progress on the Katznelson's Question was made by Host, Kra, and Maass, who gave a positive answer for nilsystems and their proximal extensions (see \cite[Theorem 4.1 and Proposition 3.8]{HostKraMaass2016}).  Nilsystems are translations of compact homogeneous spaces of nilpotent Lie groups; Host, Kra, and Maass showed that in a minimal nilsystem, any set of recurrence for the largest equicontinuous factor is a set of recurrence for the nilsystem.  They also showed that if $(W,T) \to (X,T)$ is a proximal extension\footnote{\label{footnote_proximal_definition} An extension $\pi: (X,T) \to (Y,T)$ is \emph{proximal} if for all $x, y \in X$ with $\pi x = \pi y$ and all $\eps  >0$, there exists $n \in \N$ such that $d_X(T^n x, T^n y) < \eps$.} of minimal systems, then every set of recurrence for $(X,T)$ is a set of recurrence for $(W,T)$.

Host, Kra, and Maass's results combine with ours to give a list of systems in which a positive answer to Katznelson's Question is known: nilsystems, skew product extensions of equicontinuous systems by 1-tori, systems which support a measure with respect to which the transformation exhibits mixing on the $L^2$-orthocomplement of the Kronecker factor, and inverse limits, proximal extensions, and factors of such systems.  Beyond a few other sporadic examples, to our knowledge, this is a complete list.

\begin{table}[h]
\setlength\tabcolsep{11pt}
\setlength{\extrarowheight}{4pt}
\begin{tabular}{l|l|l}
\multicolumn{1}{c|}{Expression} & \multicolumn{1}{c|}{Syndeticity} & \multicolumn{1}{c}{Density} \\ \hline \hline

\begin{tabular}[c]{@{}l@{}} $A-A$ \end{tabular} & \begin{tabular}[c]{@{}l@{}}\cite{KatznelsonChromaticNumber2001}, \\ \cite{HostKraMaass2016} (nilsystems)\end{tabular} & \begin{tabular}[c]{@{}l@{}}\cite{ruzsa_difference_sets_unpublished_1985}, \cite{kriz1987}, \\ \cite{griesmer_separating_bohr_denseness_2020} \\[3pt] \end{tabular} \\ \hline

\begin{tabular}[c]{@{}l@{}} $A+B$ \\[3pt] \end{tabular} &  & \begin{tabular}[c]{@{}l@{}} \cite{BFW2006}, \cite{griesmer_sumsets_of_dense_and_sparse_2012} \\[3pt] \end{tabular} \\ \hline

\begin{tabular}[c]{@{}l@{}} $A+B+C$ \\[3pt] \end{tabular} & \begin{tabular}[c]{@{}l@{}}\cite{elliskeynes1972} (abelian), \\ \cite{BFW2006} \\[3pt] \end{tabular}  & \\ \hline

\begin{tabular}[c]{@{}l@{}}  $r \cdot A + s \cdot A + t \cdot A$ \\[3pt] \end{tabular} & \cite{le_le_2021} (abelian) & \cite{bergelson_ruzsa_2009} \\ \hline

\begin{tabular}[c]{@{}l@{}} $(A-A)-(A-a)$ \\[3pt] \end{tabular}  & \begin{tabular}[c]{@{}l@{}}\cite{elliskeynes1972} (abelian), \\ \cite{mcmahon_weak_disjointness1978} (abelian) \\[3pt] \end{tabular} & \begin{tabular}[c]{@{}l@{}}\cite{ruzsa1982}, \cite{ruzsa_difference_sets_unpublished_1985},\\ \cite{hegyvariruzsa2016}  \\[3pt] \end{tabular} \\ \hline

\begin{tabular}[c]{@{}l@{}} $(A-A)-(B-b)$ \\[3pt] \end{tabular}  & & \begin{tabular}[c]{@{}l@{}}  \cite{bjorklund_griesmer_2019} (amenable) \\[3pt] \end{tabular} \\ \hline

\begin{tabular}[c]{@{}l@{}} $(A-A)-(A-A)$ \\[3pt] \end{tabular}  & \begin{tabular}[c]{@{}l@{}}\cite{folnerbogoliouboff1954} (abelian), \\ \cite{veech1968} (abelian), \\ \cite{mcmahon_weak_disjointness1978} (amenable) \\[3pt]\end{tabular} & \begin{tabular}[c]{@{}l@{}}\cite{bogolyubov1939}, \\ \cite{folnernoteonbogoliouboff1954} (abelian), \\ \cite{landstad1971} (amenable)\end{tabular}  \\ \hline

\begin{tabular}[c]{@{}l@{}} $(A-A)-(B-B)$ \\[3pt] \end{tabular}  & \begin{tabular}[c]{@{}l@{}} \cite{elliskeynes1972} (non-abelian) \\[3pt]\end{tabular} & \\ \hline

\begin{tabular}[c]{@{}l@{}}$\big((A-A)-(A-A) \big)$\\ $ \ \ \ - \big((A-A)-(A-A) \big)$\end{tabular} & \begin{tabular}[c]{@{}l@{}}\cite{folner_ap_functions_on_abelian_groups} (abelian), \\ \cite{folner_proof_of_main_ap_theorem_1949} (abelian) \\[3pt]\end{tabular}  & \\
\end{tabular}

~\\~\\[-5pt]

\caption{A survey of the literature containing results pertaining to the combinatorial form of Katznelson's Question.  The columns separate the works based on the primary assumptions on the sets $A$, $B$, and $C$, while the parentheses indicate the setting: the sets $A$, $B$, and $C$ are subsets of the integers, abelian groups, amenable groups, or non-abelian groups (if no setting is indicated, the integers are the primary setting). The phrase ``nilsystems'' means that the primary results concern sets $A$ of the form $\{ n \in \N \ | \ T^n x \in U \}$, where $(X,T)$ is a nilsystem.}
\label{table_discrete_results}
\end{table}

\subsection{Outline of the article}
The article is organized as follows.  In \cref{sec_dynamical_prerecs}, we lay out the notation, terminology, and results from topological dynamics required for our main theorems.  \cref{sec_recurrence_in_skew_towers} covers basic results about skew products; Theorems \ref{maintheorem_katznelson_for_skew_towers} and \ref{maintheorem_hidden_frequencies} are proved in Sections \ref{sec_returns_in_towers} and \ref{sec_hidden_frequencies}, respectively.  In \cref{sec_combinatorial_setting}, we elaborate on Katznelson's Question and its consequences in a combinatorial setting, including a proof of \cref{maintheorem_combinatorial_form} in \cref{sec_proof_of_comb_form}.  We end the paper with \cref{sec_open_questions} by discussing a number of open questions and directions.

\subsection{Acknowledgements}
The authors are indebted to several people who generously provided guidance on the history of Katznelson's Question in its various forms, historical references, and the current standing of open problems: Vitaly Bergelson, Eli Glasner, John Griesmer, Karl Petersen, and Imre Ruzsa. The authors also thank the anonymous referee, whose shrewd feedback led to numerous clarifications, \cref{rmk_special_case_of_seq_form}, and a strengthening of Theorem C. Finally, thanks goes to Felipe Hern\'{a}ndez, who pointed out a mistake in an earlier version of \cref{def_winding_number}. The third author is supported by the National Science Foundation under grant number DMS~1901453.

\section{Notation, terminology, and prerequisites}
\label{sec_dynamical_prerecs}

We denote the set of integers and positive integers by $\Z$ and $\N$, respectively.  The (additive) 1-torus $\R / \Z$ is denoted by $\T$ and equipped with the metric induced by the function $\| \cdot \|: \R \to [0,1/2]$ that measures the Euclidean distance to the nearest integer.  Throughout, for convenience, Cartesian products of metric spaces are equipped with the $L^1$ (taxicab) metric.

\subsection{Combinatorics and topological dynamics}
For $A, B \subseteq \Z$ and $n \in \Z$, define
\begin{align*}
    A-n &= \big\{ m \in \Z \ \big| \ n + m  \in A \big\}, & nA &= \big\{n m \in \Z \ \big| \ m \in A \big\},\\
    A-B &= \big\{ a-b \ \big| \ a \in A, \ b \in B \big\}, &  A/n &= \big\{m \in \Z \ \big| \ nm \in A \big\}.
\end{align*}

As defined in the introduction, a (topological dynamical) \emph{system} $(X,T)$ is a pair consisting of a compact metric space $(X,d_X)$ and a continuous map $T: X \to X$.  A system $(X,T)$ is \emph{minimal} if for all $x \in X$, the set $\{T^n x \ | \ n \in \N\}$ is dense in $X$.

In this paper, we will focus on the recurrence of points in systems.  The following definition helps to make this precise. (The interested reader can consult \cite[Theorem 2.3]{HostKraMaass2016} and \cite[Theorems~5.3 and 5.6]{boshernitzanglasner2009} for a number of other equivalent characterizations of sets of topological recurrence, including the one mentioned in the first paragraph of \cref{sec_overview}.)

\begin{definition}\leavevmode
\label{def_eps_returns_and_recurrence}
The \emph{set of $\eps$-returns} of a system $(X,T)$ is
\[\dyret_\eps(X,T) = \big\{ m \in \N \ \big| \ \inf_{x \in X} d_X(x, T^m x) < \eps \big\}.\]
A set $R \subseteq \N$ is a \emph{set of topological recurrence} if for all systems $(X,T)$ and all $\eps > 0$, 
\[R \cap \dyret_\eps (X, T) \neq \emptyset.\]
\end{definition}

\begin{lemma}
\label{corollary_sufficient_to_prove_BR_property_for_power_of_system}
Let $(X,T)$ be a system.  For all $k \in \N$ and $\eps > 0$,
\[\dyret_\eps (X, T^k) = \dyret_\eps(X,T)/k.\]
\end{lemma}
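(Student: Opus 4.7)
The plan is to unfold the definitions on both sides of the claimed equality and observe that they coincide verbatim. By \cref{def_eps_returns_and_recurrence} applied to the system $(X, T^k)$,
\[
m \in \dyret_\eps(X, T^k) \iff m \in \N \text{ and } \inf_{x \in X} d_X\bigl(x, (T^k)^m x\bigr) < \eps.
\]
Since $(T^k)^m = T^{km}$, the right-hand condition is $\inf_{x \in X} d_X(x, T^{km} x) < \eps$, which is by definition the statement $km \in \dyret_\eps(X,T)$.

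Next, I would invoke the notation $A/k = \{m \in \Z \mid km \in A\}$ introduced at the start of \cref{sec_dynamical_prerecs}. Since $\dyret_\eps(X,T) \subseteq \N$ and $k \in \N$, membership $km \in \dyret_\eps(X,T)$ forces $m \in \N$ as well. Hence
\[
m \in \dyret_\eps(X,T)/k \iff m \in \N \text{ and } km \in \dyret_\eps(X,T),
\]
which matches exactly the condition derived in the previous paragraph.

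There is no serious obstacle here; the lemma is a definitional identity recording the compatibility between the iteration $T^k$ and the dilation operation $A \mapsto A/k$ on subsets of $\N$. The only point worth noting explicitly in the write-up is that the ambient set $\N$ (rather than $\Z$) is preserved on both sides, which is automatic since $k$ is positive and $\dyret_\eps(X,T)$ consists only of positive integers.
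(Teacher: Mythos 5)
Your proof is correct and follows the same definitional-unwinding route as the paper: both sides reduce to the condition that there exists $x \in X$ with $d_X(x, T^{km}x) < \eps$. The paper states this in one sentence; you spell it out in slightly more detail, including the small but worthwhile observation that the operation $A \mapsto A/k$ returns a subset of $\N$ because $k$ is positive.
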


\begin{proof}
Let $\eps > 0$.  The conclusion of the lemma follows by noting that for all $m \in \N$, both of the conditions $m \in \dyret_\eps (X, T^k)$ and $m \in \dyret_\eps (X, T) / k$ are equivalent to the existence of $x \in X$ such that $d_X(x, T^{mk} x) < \eps$.
\end{proof}

\subsection{Bohr sets, almost periodicity, and equicontinuity}

Bohr sets, which play a central role in Katznelson's Question and in this paper, are closely related to the topics of almost periodicity and equicontinuity.  In this section, we define \bz{} sets and collect the prerequisite results necessary for the proofs of main theorems.

\begin{definition}
\label{def_bohr_set}
A set $A \subseteq \N$ is a \emph{\bz{} set} if it contains the positive elements of a Bohr neighborhood of zero, that is, if there exists $\delta > 0$, $d \in \N$, and $\alpha \in \T^d$ such that
    \[\big\{ n \in \N \ \big| \ \| n \alpha \| < \delta \big\} \subseteq A.\]
The set $A$ is a \emph{Bohr set} if there exists $n \in \N$ such that $A-n$ is a \bz{} set. A subset of $\N$ is a \emph{\bzs{} set} (also, a \emph{set of Bohr recurrence}) if it has non-empty intersection with all \bz{} subsets of $\N$.
\end{definition}

\begin{remark}
\label{rmk_equiv_forms_of_bohr_and_bohrzero}
The family of \bz{} subsets of $\N$ is a filter: it is upward closed and closed under intersections.  Dually, the family of \bzs{} subsets of $\N$ is partition regular: at least one cell of any finite partition of a \bzs{} set is a \bzs{} set.  A set is \bz{} if and only if it has non-empty intersection with all \bzs{} sets.\footnote{That every \bz{} set has non-empty intersection with every \bzs{} set follows by definition. Conversely, suppose that $A$ has the property that $A\cap B\neq \emptyset$ for all \bzs{} sets $B.$ It follows that $\N\setminus A$ is not a \bzs{} set. By the definition of \bzs{} sets, there exists a \bz{} set $B_0$ such that $(\N\setminus A)\cap B_0=\emptyset$, whereby $B_0\subseteq A.$ Supersets of \bz{} sets are \bz{} sets, so $A$ is a \bz{} set.}  Also, note that $B \subseteq \N$ is a \bzs{} set if and only if for all $d \in \N$ and $\alpha \in \T^d$, $\inf_{n \in B} \| n \alpha \| = 0$.  This helps to explain why such sets are called sets of Bohr recurrence; see also the terminology in \cref{def_eps_returns_and_recurrence}.
\end{remark}

\begin{remark}
\label{rmk_Bohr_topology}
The completion of $\Z$ with the Bohr topology -- the topology generated by Bohr neighborhoods of zero and their translates -- yields its \emph{Bohr compactification}, $b\Z$.  Addition on $\Z$ induces a binary operation on $b\Z$ that makes it a compact (non-metrizable) abelian group.  In this context, a set $A \subseteq \N$ is a \bz{} set if and only if it contains the preimage (under the canonical injection of $\N$ into $b\Z$) of an open neighborhood of $0$ in $b\Z$, and a set $B \subseteq \N$ is a \bzs{} set if and only if $0$ is an accumulation point of the image of $B$ in $b\Z$.  We mention the Bohr compactification here only to help motivate the terminology; we do not have any use for particulars concerning $b\Z$ in this paper, so we do not develop the details any further.
\end{remark}

\begin{lemma}
\label{lemma_dilates_of_bohr_are_bohr}
If $B \subseteq \N$ is a \bz{} set, then for all $m \in \N$, the sets $mB$ and $B / m$ are \bz{} sets.
\end{lemma}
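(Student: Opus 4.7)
The plan is to unpack \cref{def_bohr_set} directly and produce explicit Bohr neighborhoods witnessing membership in the $\bz$ family. Suppose $B\supseteq\{n\in\N:\|n\alpha\|<\delta\}$ for some $d\in\N$, $\alpha\in\T^d$, and $\delta>0$.

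For $B/m$ the argument is essentially tautological. Observe that $m\alpha\in\T^d$, and for any $n\in\N$ with $\|n(m\alpha)\|<\delta$ we have $\|(mn)\alpha\|<\delta$, so $mn\in B$ and hence $n\in B/m$. Thus
\[\big\{n\in\N : \|n(m\alpha)\|<\delta\big\}\subseteq B/m,\]
which shows $B/m$ is $\bz$.

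For $mB$ the idea is to augment $\alpha$ by an extra coordinate $1/m\in\T$ so that the Bohr condition forces divisibility by $m$. Using divisibility of $\T^d$, pick $\gamma\in\T^d$ with $m\gamma=\alpha$, and set $\beta\defeq(\gamma,1/m)\in\T^{d+1}$, $\delta'\defeq \tfrac{1}{2}\min(\delta,1/(2m))$. With $\T^{d+1}$ equipped with the taxicab metric, $\|k\beta\|=\|k\gamma\|+\|k/m\|$. If $k\in\N$ and $\|k\beta\|<\delta'$, the second summand forces $\|k/m\|<1/(2m)$, which since $k/m\in\tfrac{1}{m}\Z$ implies $k=mj$ for some $j\geq1$; the first summand then gives $\|j\alpha\|=\|mj\gamma\|<\delta$, so $j\in B$ and $k=mj\in mB$. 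Hence $\{k\in\N:\|k\beta\|<\delta'\}\subseteq mB$, so $mB$ is $\bz$.

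The only substantive points are the use of divisibility of $\T^d$ to define $\gamma$ and the inclusion of the coordinate $1/m$ to pin $k$ to the subgroup $m\Z$; I expect no genuine obstacle beyond keeping track of constants.
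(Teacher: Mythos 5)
Your proof is correct and follows essentially the same approach as the paper: the $B/m$ case uses the frequency $m\alpha$, and the $mB$ case uses a frequency $\gamma$ with $m\gamma=\alpha$ combined with a congruence-forcing coordinate. The only cosmetic difference is that the paper obtains the $m\N$ restriction by invoking the filter property of \bz{} sets (intersecting two \bz{} sets), whereas you build it directly into the Bohr neighborhood via the extra coordinate $1/m$.
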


\begin{proof}
Let $\eps > 0$ and $\alpha \in \T^d$ be such that
\[C:=\big\{ n \in \N \ \big| \ \|n \alpha \| < \eps \big\} \subseteq B.\] 
For $m\in \N,$ let
\[D:=\left\{n \in \N \ \big| \ \left\|n \frac{\alpha}{m} \right\| < \eps \right\} \text{ and } E:=\left\{n \in \N \ \big| \ \left\|n m\alpha \right\| < \eps \right\}.\]
It is easy to check that
\[D/m\subseteq C\;\;\text{and}\;\;E\subseteq C/m.\]
The result follows by the relation $D\cap m\N\subseteq mC\subseteq mB,$ as both $D$ and $m\N$ are \bz{} sets, and by the fact that $E\subseteq C/m\subseteq B/m$.
\end{proof}

Let $G$ be a compact abelian group and $T: G \to G$ be addition by a fixed element $g \in G$.  The map $T$ is an isometry (with respect to a translation-invariant metric $d_G$ on $G$), and the set of times at which a point $x \in G$ visits a non-empty open set $U \subseteq G$ is a Bohr set.  In fact, the same conclusion can be reached under the weaker, topological assumption that the family of maps $\{T^n \ | \ n \in \N\}$ is equicontinuous; see \cref{lemma_equicontinuous_return_times} below and the remark following it.

\begin{definition}
\label{def_equicontinuous_system}
A system $(X,T)$ is \emph{equicontinuous} if the family of maps $\{T^n \ | \ n \in \N\}$ is equicontinuous, i.e., for all $\eps > 0$, there exists $\delta > 0$ such that for all $x, y \in X$ with $d_X(x,y) < \delta$ and all $n \in \N$, $d_X(T^n x, T^n y) < \eps$.
\end{definition}

Equicontinuity is closely related to the dynamical phenomenon of almost periodicity, defined next.  See \cref{lemma_equicontinuous_observables_are_ap} below for the connection which is most relevant to this work.

\begin{definition}
\label{def_bohrap_sequence}
Let $(X,d_X)$ be a metric space, and let $f: \N \to X$. The sequence $f$ is \emph{(Bohr) almost periodic on $\N$} if for all $\eps > 0$, the \emph{set of $\eps$-almost periods}
    \[\big\{ m \in \N \ \big| \ \sup_{n \in \N} d_X \big( f(n + m), f(n) \big) < \eps \big\}\]
is a \bz{} set. Replacing all instances of $\N$ with $\Z$ yields the definition of a Bohr almost periodic function on $\Z$ as defined in the introduction. The \emph{mean} of a real-valued almost periodic sequence $f: \N \to \R$ is the quantity $\lim_{N \to \infty} N^{-1} \sum_{n=1}^N f(n)$.
\end{definition}

The following is a collection of useful classical results relating Bohr sets, almost periodicity, and equicontinuity; see \cite[Chapter 4]{petersen_ergodic_theory} for a modern presentation of the ideas.

\begin{lemma}
\label{lemma_equicontinuous_observables_are_ap}
Let $f: \N \to \R$. The following are equivalent:
\begin{enumerate}
    \item the sequence $f$ is almost periodic;
    \item there exists an equicontinuous system $(X,T)$, a point $x \in X$, and a continuous function $h: X \to \R$ such that for all $n \in \N$, $f(n) = h(T^n x)$.
\end{enumerate}
Moreover, the same statement holds with $\R$ replaced by $\T$ and with ``equicontinuous system'' replaced by ``minimal equicontinuous system'' in condition (2).
\end{lemma}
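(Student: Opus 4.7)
My plan is to prove the two implications separately: (2) $\Rightarrow$ (1) using the equicontinuous return-times lemma (\cref{lemma_equicontinuous_return_times}), and (1) $\Rightarrow$ (2) via the orbit-closure construction of an equicontinuous system from an almost periodic sequence.

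For (2) $\Rightarrow$ (1), fix $\eps > 0$. Continuity of $h$ on the compact metric space $X$ gives uniform continuity, so there exists $\eta > 0$ such that $d_X(y,z) < \eta$ forces $|h(y) - h(z)| < \eps$ (or $\|h(y) - h(z)\| < \eps$ in the $\T$-valued case). Equicontinuity of $\{T^n\}_{n \in \N}$ then provides $\delta > 0$ with $d_X(y,z) < \delta$ implying $d_X(T^n y, T^n z) < \eta$ for every $n \in \N$. Applied with $y = x$ and $z = T^m x$, this shows that every $m$ satisfying $d_X(T^m x, x) < \delta$ is an $\eps$-almost period of $f$. By \cref{lemma_equicontinuous_return_times}, the set of such $m$ is a \bz{} set, so the set of $\eps$-almost periods of $f$ contains a \bz{} set.

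For (1) $\Rightarrow$ (2), let $S$ denote the left shift on bounded sequences, $(Sg)(k) = g(k+1)$, and let $X$ be the closure of the orbit $\{S^n f : n \geq 0\}$ in $\ell^\infty(\N,\R)$ (resp.\ $\ell^\infty(\N,\T)$) under the supremum metric, viewing $f$ as the point $S^0 f \in X$. The shifts $\{S^n\}$ are $1$-Lipschitz contractions, hence uniformly equicontinuous on $X$, and continuity of $S$ gives $S(X) \subseteq X$. Compactness of $X$ reduces to total boundedness of the orbit: given $\eps > 0$, the \bz{} set $P_\eps$ of $\eps$-almost periods of $f$ is syndetic with some gap bound $L$, and for $n \geq L$ one chooses $m \in P_\eps \cap [n-L+1, n]$ and $j = n - m \in [0, L-1]$, obtaining $\|S^n f - S^j f\|_\infty \leq \sup_k |f(k+m) - f(k)| < \eps$; thus $\{S^j f : 0 \leq j \leq L\}$ is a finite $\eps$-net. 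Taking $T = S$, $x = f$, and $h(g) = g(1)$ yields $f(n) = h(T^{n-1} x)$; after a trivial reindexing (or after canonically extending $f$ to $\N \cup \{0\}$ by almost periodicity), this realizes $f(n) = h(T^n x)$.

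For the moreover clause, the $\T$-valued system $(X, S)$ just constructed is automatically minimal, because in any equicontinuous system there is an equivalent $T$-invariant metric under which $T$ is an isometry, and every orbit closure of an isometric system is a minimal subsystem. Conversely, starting from a possibly non-minimal equicontinuous system in (2), one restricts to the orbit closure of $x$ to obtain a minimal equicontinuous subsystem realizing the same $f$. The principal obstacle I anticipate is the total boundedness step, which is where almost periodicity genuinely enters: passing from the \bz{} set $P_\eps$ to a uniform gap bound $L$ requires the syndeticity of \bz{} sets, and the covering estimate must accommodate the fact that the shift on $\ell^\infty(\N)$ is only a contraction, not an isometry.
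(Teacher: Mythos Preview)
The paper does not supply its own proof of this lemma; it is stated as a classical fact with a pointer to Petersen's book, so there is no in-paper argument to compare against directly.

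Your $(1)\Rightarrow(2)$ direction is the standard orbit-closure construction and is fine (the indexing fix is harmless). The ``moreover'' clause is also handled correctly.

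Your $(2)\Rightarrow(1)$ direction, however, is circular within the paper's logical order. You invoke \cref{lemma_equicontinuous_return_times} to conclude that $\{m\in\N : d_X(x,T^m x)<\delta\}$ is a \bz{} set, but look at how the paper proves \cref{lemma_equicontinuous_return_times}: it picks a bump function $h$ supported near $x$, observes that $n\mapsto h(T^n x)$ is almost periodic \emph{by the present lemma}, and reads off the return-time set from the almost periods. So \cref{lemma_equicontinuous_return_times} depends on exactly the implication you are trying to establish. To break the loop you need an independent reason why point return times in an equicontinuous system form a \bz{} set---the usual route is to pass to the orbit closure of $x$, identify the resulting minimal equicontinuous system with a rotation on a compact monothetic abelian group, and then use finitely many characters to trap a neighborhood of the identity, exhibiting the return-time set explicitly as a \bz{} set. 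That is the substantive step your sketch is missing.
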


It is a well-known consequence of equicontinuity, at least in minimal systems, that the set of return times of a point to a neighborhood of itself is a \bz{} set, but we were unable to find a convenient reference in the literature concerning non-minimal systems.  The argument is short so we provide it here.

\begin{lemma}
\label{lemma_equicontinuous_return_times}
Let $(X,T)$ be an equicontinuous system.  For all $\eps > 0$, the set
\begin{align}
\label{eqn_all_points_return_times_equicontinuous}
    \{n \in \N \ | \ \sup_{x \in X} d_X(x, T^n x) < \eps \}
\end{align}
is a \bz{} set.
\end{lemma}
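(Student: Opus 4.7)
The plan is to exhibit the return-time set as a superset of a Bohr neighborhood of zero by extracting the relevant frequencies from the Ellis group of $(X,T)$.

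First, let $\mathcal{F}$ denote the closure of $\{T^n : n \geq 0\}$ inside $C(X,X)$ equipped with the uniform metric $d_\infty(f,g) \defeq \sup_{x \in X} d_X(f(x), g(x))$. Equicontinuity of $\{T^n\}$ together with compactness of $X$ ensures, via the Arzel\`{a}--Ascoli theorem, that $\mathcal{F}$ is compact; equicontinuity also yields joint continuity of composition on $\mathcal{F}\times\mathcal{F}$, so $\mathcal{F}$ is a compact topological semigroup containing $\id = T^0$. The Ellis group theorem for equicontinuous systems upgrades $\mathcal{F}$ to a compact topological group of homeomorphisms of $X$, so its closed subgroup $H \defeq \overline{\{T^n : n \in \Z\}}$ is a compact metrizable abelian group. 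Moreover $H$ coincides with $\overline{\{T^n : n \geq 0\}}$, since in a compact group topologically generated by one element the positive powers are already dense.

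Second, I apply Pontryagin duality to $H$: because $H$ is a compact Hausdorff abelian group, its continuous characters $\widehat{H}$ separate points and thus generate its topology. Applied to the open neighborhood
\[U \defeq \{h \in H : d_\infty(h, \id) < \eps\}\]
of the identity, this yields characters $\chi_1, \ldots, \chi_k \in \widehat{H}$ and some $\delta > 0$ with $\bigcap_{i=1}^{k} \{h \in H : |\chi_i(h) - 1| < \delta\} \subseteq U$. Writing $\chi_i(T) = e^{2\pi i \alpha_i}$ for $\alpha_i \in \T$ gives $\chi_i(T^n) = e^{2\pi i n \alpha_i}$, so there is a $\delta' > 0$ such that the condition $\|n\alpha_i\| < \delta'$ for every $i$ forces $d_\infty(T^n, \id) < \eps$.

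Setting $\alpha \defeq (\alpha_1, \ldots, \alpha_k) \in \T^k$, the Bohr neighborhood $\{n \in \N : \|n\alpha\| < \delta'\}$ is contained in the set appearing in \eqref{eqn_all_points_return_times_equicontinuous}, which is therefore a \bz{} set by \cref{def_bohr_set} and the upward closure of the family of \bz{} sets (\cref{rmk_equiv_forms_of_bohr_and_bohrzero}). The one nontrivial step is the passage from compact topological semigroup to group in the first paragraph, since $T$ is not assumed invertible; this is precisely the content of the Ellis group theorem for equicontinuous systems, and it can alternatively be verified by observing that every orbit closure in an equicontinuous system is a minimal equicontinuous subsystem on which $T$ acts as a homeomorphism, so the inverses needed to define $H$ are available pointwise and fit together continuously by equicontinuity.
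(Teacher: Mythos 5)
Your route — via the Ellis (enveloping) group and Pontryagin duality — is genuinely different from the paper's, which first shows that each pointwise return-time set $A_{x,\eps}=\{n : d_X(x,T^nx)<\eps\}$ is Bohr$_0$ (by applying \cref{lemma_equicontinuous_observables_are_ap} to $f(n)=h(T^nx)$ for a bump function $h$ peaked at $x$) and then upgrades to the uniform statement by intersecting the finitely many sets $A_{y,\eps/3}$ over a $\delta$-dense set $Y\subseteq X$ and invoking equicontinuity plus the triangle inequality. Your version is more structural and makes the frequencies explicit as characters of the Ellis group, avoiding the covering step.

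The step you single out as nontrivial is indeed the soft spot, and your fallback justification for it is false as stated. Under the paper's definitions $T$ need only be continuous with the forward family $\{T^n:n\in\N\}$ equicontinuous, and in that generality orbit closures need not be minimal, $T$ need not act invertibly on them, and $\mathcal{F}$ need not be a group. With $X=[0,1]$ and $T(x)=x/2$ the forward family is equicontinuous, yet $T^n\to\mathbf{0}$ uniformly, so $\mathcal{F}$ contains the non-invertible constant map $\mathbf{0}$; the orbit closure of $1$ is $\{2^{-n}:n\ge 0\}\cup\{0\}$, which is not minimal and on which $T$ is not surjective. In fact $\sup_x|x-T^nx|=1-2^{-n}$, so for $\eps<1$ the set in \eqref{eqn_all_points_return_times_equicontinuous} is finite, and the lemma as literally stated fails for this system — which shows the paper, too, is tacitly assuming $T$ to be a homeomorphism (equivalently, that $\{T^n:n\in\Z\}$ is equicontinuous), the classical setting for \cref{lemma_equicontinuous_observables_are_ap}. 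Under that standing assumption your proof is complete: $\mathcal{F}$ is then a compact metrizable abelian group with the positive powers of $T$ dense, and the duality step correctly produces a Bohr neighborhood of zero inside the return-time set.
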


\begin{proof}
First we will show that for all $x \in X$ and $\eps > 0$, the set
\begin{align}
\label{eqn_point_return_times_set}
    A_{x, \eps} \defeq \big\{n \in \N \ \big| \ d_X(x,T^n x) < \eps \big\}
\end{align}
is a \bz{} set.  Let $h: X \to [0,1]$ be continuous, equal to 1 at $x$, and equal to 0 outside of an open ball of radius $\eps$ about $x$.  It follows from  \cref{lemma_equicontinuous_observables_are_ap} that the sequence $f: n \mapsto h(T^n x)$ is almost periodic.  Since $h(x) = 1$, the set of $(1/2)$-almost periods of $f$, a \bz{} set, is contained in $A_{x, \eps}$.

Now we will prove the statement in the lemma.  Let $\eps > 0$.  Let $0<\delta < \eps / 3$ be sufficiently small so that for all $x, y \in X$ with $d_X(x,y) < \delta$ and for all $n \in \N$, $d_X(T^n x, T^n y) < \eps / 3$.  Let $Y$ be a $\delta$-dense subset of $X$.  By the previous paragraph, the set
\[A_\eps \defeq \bigcap_{y \in Y} A_{y, \eps / 3},\]
where $A_{y, \eps / 3}$ is defined as in \eqref{eqn_point_return_times_set}, is a \bz{} set since it is the intersection of finitely many \bz{} sets.

We will show that $A_\eps$ is a subset of the set in \eqref{eqn_all_points_return_times_equicontinuous}. Let $n \in A_\eps$ and $x \in X$. There exists $y \in Y$ such that $d_X(x,y) < \delta$. Since $d_X(T^n x, T^n y) < \eps/3$, $d_X(y, T^n y) < \eps / 3$, and $\delta < \eps / 3$, we have by the triangle inequality that $d_X(x, T^n x) < \eps$, as was to be shown.
\end{proof}

In fact, \bz{} sets can be used to characterize equicontinuous systems: a minimal system $(X,T)$ is equicontinuous if and only if for all $x \in X$ and $\eps > 0$, the set $\{n \in \N \ | \ d_X( x, T^n x) < \eps \}$ is a \bz{} set.  We do not have need for this fact, so we omit the proof.

\subsection{Dynamical forms of Katznelson's Question}
\label{sec_dynamically_equivalent_forms}

Katznelson's Question can be stated in several different equivalent forms. In this section, we describe two dynamical forms; some of its combinatorial forms are presented in \cref{sec_comb_forms_of_katznelson}.  For our purposes, it will be most convenient to phrase Katznelson's Question in terms of the size of the set of $\eps$-returns of a system.

\begin{definition}
\label{def_bz_large_returns}
A system $(X,T)$ has \emph{\bz{} large returns} if for all $\eps > 0$, the set of $\eps$-returns $\dyret_\eps(X,T)$ is a \bz{} set.
\end{definition}

Katznelson's Question and the following one are equivalent, in the sense that one has a positive answer if and only if the other does.  It is this formulation of Katznelson's Question that we will address in the next section.

\begin{namedthm}{Question D1}
    Do all topological dynamical systems have \bz{} large returns?
\end{namedthm}

That Katznelson's Question and Question D1 are equivalent follows from the marginally finer fact that a system $(X,T)$ has \bz{} large returns if and only if sets of Bohr recurrence are sets of recurrence for $(X,T)$.  Indeed, suppose $(X,T)$ has \bz{} large returns, and let $R \subseteq \N$ be a set of Bohr recurrence.  As explained in \cref{rmk_equiv_forms_of_bohr_and_bohrzero}, the set $R$ is a \bzs{} set.  For all $\eps > 0$, the set $\dyret_\eps(X,T)$ is a \bz{} set, whereby $\dyret_\eps(X,T) \cap R \neq \emptyset$.  Since $\eps > 0$ was arbitrary, the set $R$ is a set of recurrence for $(X,T)$.  Conversely, suppose that a system $(X,T)$ does not have \bz{} large returns: there exists $\eps > 0$ such that $\dyret_\eps(X,T)$ is not a \bz{} set.  As explained in \cref{rmk_equiv_forms_of_bohr_and_bohrzero}, the set $R \defeq \N \setminus \dyret_\eps(X,T)$ is a \bzs{} set, a set of Bohr recurrence, that is not a set of recurrence for $(X,T)$.

\begin{namedthm}{Question D2}
    If $(X,T)$ is a minimal topological dynamical system, is it true that for all non-empty, open $U \subseteq X$, the set
    \[\big\{ n \in \N \ \big| \ U \cap T^{-n}U \neq \emptyset \big\}\]
    is a \bz{} set?
\end{namedthm}

Questions D1 and D2 are equivalent.  Indeed, that a positive answer to Question D2 implies one for D1 follows from the fact that any system $(X,T)$ contains a minimal subsystem $(X',T)$, and $\dyret_\eps(X',T) \subseteq \dyret_\eps(X,T)$.  On the other hand, a positive answer to Question D1 combines with the following lemma to immediately give a positive answer to Question D2.

\begin{lemma}
\label{lemma_containment_amongst_recurrence_types}
Let $(X,T)$ be a minimal system.  For all non-empty, open $U \subseteq X$, there exists $\eps > 0$ such that
\[\dyret_{\eps} (X,T) \subseteq \big\{ n \in \N \ \big| \ U \cap T^{-n}U \neq \emptyset \big\} \subseteq \dyret_{\diam(U)} (X,T).\]
\end{lemma}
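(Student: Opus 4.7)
The plan is to prove the two inclusions separately; the first is the substantive one, while the second is essentially a direct translation.

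For the second inclusion, if $y \in U \cap T^{-n}U$ then $y, T^n y \in U$, so $d_X(y, T^n y) \leq \diam(U)$ and hence $\inf_{x \in X} d_X(x, T^n x) \leq \diam(U)$. To upgrade this to the strict inequality required by \cref{def_eps_returns_and_recurrence}, I would use the openness of $U \cap T^{-n}U$ (which follows from the continuity of $T^n$) to select a nearby point $y'$ with $d_X(y', T^n y')$ strictly less than $\diam(U)$.

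For the first inclusion, the strategy is to use minimality together with compactness to reduce to a finite check of iterates. I would begin by choosing $u_0 \in U$ and $r > 0$ with $B(u_0, 2r) \subseteq U$, and setting $V \defeq B(u_0, r)$. Minimality of $(X,T)$ ensures that every orbit enters $V$, and continuity of the iterates of $T$ promotes this to an open-neighborhood property: for each $x \in X$ there exist $k_x \in \N$ and an open neighborhood $N_x$ of $x$ with $T^{k_x}(N_x) \subseteq V$. Compactness of $X$ then extracts finitely many integers $k_1, \ldots, k_m$ and open sets $W_1, \ldots, W_m$ covering $X$ with $T^{k_i}(W_i) \subseteq V$ for each $i$. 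Uniform continuity of each $T^{k_i}$ on the compact space $X$ next furnishes an $\eps > 0$ such that $d_X(y, z) < \eps$ forces $d_X(T^{k_i} y, T^{k_i} z) < r$ for every $i$. Given $n \in \dyret_\eps(X,T)$, I would pick $x \in X$ with $d_X(x, T^n x) < \eps$ and an index $i$ with $x \in W_i$, and set $z \defeq T^{k_i}x$: then $z \in V \subseteq U$ and the triangle inequality gives $d_X(u_0, T^n z) \leq d_X(u_0, z) + d_X(T^{k_i}x, T^{k_i+n}x) < r + r = 2r$, so $T^n z \in B(u_0, 2r) \subseteq U$. Hence $z \in U \cap T^{-n}U$, which is therefore non-empty.

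The main obstacle I foresee is not technical but conceptual: the passage from the ``somewhere in $X$'' nature of $\dyret_\eps$ to the ``somewhere in the specified open set $U$'' nature of the right-hand side. Minimality of $(X,T)$ is precisely what bridges the gap, through the finite cover by forward preimages of $V$ that it enables; this cover lets one steer any almost-recurrent point into $U$ by a bounded number of iterates while preserving the near-recurrence.
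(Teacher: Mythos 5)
Your proof is correct and takes essentially the same approach as the paper's: the paper shrinks $U$ to an open $U'$ together with a $\delta$-margin, uses minimality and compactness to obtain a uniform bound $N$ on the return time to $U'$, and then chooses $\eps$ from the equicontinuity of $T^1, \ldots, T^N$ — your argument with $B(u_0,r) \subseteq B(u_0,2r) \subseteq U$ and the finite subcover $\{W_i\}$ packages the same idea. One minor remark: for the second containment your proposed perturbation of $y$ inside the open set $U \cap T^{-n}U$ does not obviously produce a point with $d_X(y', T^n y') < \diam(U)$, but the paper elides the same strict-inequality point (writing $<$ where only $\leq$ is immediate), and it is inessential since only the first containment is used in the lemma's applications.
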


\begin{proof} The second containment is immediate: if $x \in U \cap T^{-m}U \neq \emptyset$, then $d_X(x,T^m x) \allowbreak < \diam(U)$, whereby $m \in \dyret_{\diam(U)} (X,T)$. To see the first, let $\delta > 0$ be such that $U$ contains a non-empty, open set $U'$ and its $\delta$-neighborhood.
Since $(X,T)$ is minimal, there exists $N \in \N$ such that for all $x \in X$, there exists $n \leq N$ such that $T^n x \in U'$.  Let $\eps > 0$ be such that for all $x, y \in X$ with $d_X(x,y) < \eps$ and for all $n \leq N$, $d_X(T^n x, T^n y) < \delta$.  Now, if $m \in \dyret_\eps(X,T)$, there exists $x \in X$ such that $d_X(x,T^mx) < \eps$.  It follows that there exists $n \leq N$ such that $d_X(T^n x, T^{n+m} x) < \delta$ and $T^n x \in U'$.  Therefore, $T^n x, T^{n+m} x \in U$, whereby $U \cap T^{-m}U \neq \emptyset$.
\end{proof}

\section{Recurrence and hidden frequencies in skew product systems}
\label{sec_recurrence_in_skew_towers}

In this section, we prove Theorems \ref{maintheorem_katznelson_for_skew_towers} and \ref{maintheorem_hidden_frequencies}.  The first gives a positive answer to Katznelson's Question for certain towers of skew product extensions by $1$-tori over equicontinuous systems, while the second demonstrates that skew product extensions can introduce new ``frequencies'' that must be controlled to ensure recurrence.

\subsection{Skew product dynamical systems}

We collect here the basic notation and terminology for skew product systems, winding numbers, and lifts of torus-valued maps.

\begin{definition}
\label{def_skew_prod_systems}
Let $(X,T)$ be a system and $h: X \to \T$ be a continuous map. The \emph{skew product system $(X \times \T,T_h)$} is defined by $T_h: X \times \T \to X \times \T$ where
\[T_h(x,t) = \big(Tx, t + h(x) \big).\]
For $m \in \N \cup \{0\}$, define $h_m: X \to \T$ by $h_0 \equiv 0$ and
\[h_m(x) = \sum_{i=0}^{m-1} h(T^i x),\]
so that $T_h^m (x,t) = \big( T^m x, t+ h_m(x) \big)$.
\end{definition}

We will frequently consider real-valued skewing functions $H: X \to \R$; the skew product system $(X \times \T, T_H)$ in this case is defined by implicitly composing the map $H$ with the quotient map $\pi: \R \to \T$.

\begin{definition}
\label{def_winding_number}
Let $h: \T \to \T$ be continuous.  There exists a continuous map $\varphi: [0,1] \to \R$ with the property that $\pi \circ \varphi = h \circ \big(\restr{\pi}{[0,1]} \big)$. The \emph{winding number of $h$} is equal to $\varphi(1) - \varphi(0)$; it is an integer that can be shown to be independent of $\varphi$. 
If $h$ has winding number equal to zero then $\varphi$ descends to a continuous function $H: \T \to \R$ satisfying $\pi \circ H = h$. We refer to $H$ as the \emph{continuous lift of $h$ to $\R$}.
\end{definition}

\begin{remark}
The winding number of a continuous function $h: \T \to \T$ counts the number of times the function $h$ ``wraps around'' the circle. The winding number of a sum of functions is the sum of their winding numbers.  For $\alpha \in \T$, the winding number of $x \mapsto h(x+\alpha)$ is easily seen to be equal to the winding number of $h$. It follows that the winding number of the function $h_m$, defined in \cref{def_skew_prod_systems}, is $m$ times the winding number of $h$.
\end{remark}

\subsection{Returns in towers over equicontinuous systems: a proof of \cref{maintheorem_katznelson_for_skew_towers}}
\label{sec_returns_in_towers}

In this section, we prove \cref{maintheorem_katznelson_for_skew_towers} using the reformulation of Katznelson's Question described in \cref{sec_dynamically_equivalent_forms}.  At the heart of \cref{maintheorem_katznelson_for_skew_towers} is a simple idea that is quickly illustrated in the case of a single skew product by the 1-torus over a rotation on the 1-torus.

\begin{namedthm}{Special case of \cref{maintheorem_katznelson_for_skew_towers}}
For all $\alpha \in \T$ and all continuous $h: \T \to \T$, the skew product system $(\T^2, T_h)$,
\[T_h(x,t) = \big(x + \alpha, t+ h(x)\big),\]
has \bz{} large returns.
\end{namedthm}

\begin{proof}
Let $\alpha \in \T$ and $h: \T \to \T$ be continuous.  Let $\eps > 0$.  In order to show that $m \in \dyret_\eps(\T^2, T_h)$, we must demonstrate the existence of a point $(x,t) \in \T^2$ for which $\|m \alpha\| < \eps$ and $\|h_m(x)\| < \eps$. (Recall that all Cartesian products in this work are equipped with the $L^1$ metric.)

Let $m \in \N$. If $h$ has non-zero winding number, then so does $h_m$, and it follows by the intermediate value theorem that there exists $x \in \T$ such that $h_m(x) = 0$. It follows that $\{m \in \N \ | \ \|m \alpha \| < \eps\} \subseteq \dyret_\eps(\T^2, T_h)$.

If, on the other hand, the function $h$ has zero winding number, then it has a continuous lift $H: \T \to \R$.  Put $\beta = \int_\T H(x) \ dx$.  For any $m \in \N$, the mean value theorem for integrals gives the existence of a point $x \in \T$ such that $H_m(x) = m \beta$.   This implies that $\{m \in \N \ | \ \|m (\alpha, \beta) \| < \eps\} \subseteq \dyret_\eps(\T^2, T_h)$.

In either case, we find that $\dyret_\eps(\T^2, T_h)$ contains a \bz{} set, whereby $(\T^2, T_h)$ has \bz{} large returns.
\end{proof}

To prove \cref{maintheorem_katznelson_for_skew_towers}, we improve on this idea in two ways.  First, we replace the base $1$-toral rotation by a general equicontinuous system.  If $X$ is totally disconnected -- as it is when $(X,T)$ is an odometer, for example -- the argument can no longer appeal to winding numbers or the intermediate value theorem.  The fact that the result continues to hold for not-necessarily-connected base systems shows that the it has less to do with connectedness and, as we will see, more to do with the fact that the real numbers are well-ordered.  Second, to extend the result to certain towers of skew product extensions, we iterate the argument, using the fact that entire fibers $\{x\} \times \T$ exhibit recurrence.

The first step in the proof of \cref{maintheorem_katznelson_for_skew_towers} is to show that partial sums of real-valued, almost periodic sequences are close to their mean along a \bz{} set.

\begin{proposition}
\label{prop_ap_sequences_near_mean_along_bz_set}
Let $f: \N \to \R$ be almost periodic, and let $\beta \in \R$ be its mean.  For all $\eps > 0$, there exists a \bz{} set $B \subseteq \N$ such that for all $m \in B$, there exists $n \in \N$ such that
\begin{align}
\label{eqn_skew_sum_is_close_to_mean}
    \left| \sum_{i = 0}^{m-1} f(n + i) - m \beta \right| < \eps.
\end{align}
\end{proposition}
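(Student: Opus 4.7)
The plan is to represent $f$ dynamically, approximate it by a trigonometric polynomial, and use a sign-constancy argument to handle the error term. By \cref{lemma_equicontinuous_observables_are_ap} we may write $f(n) = h(T^n x_0)$ for a minimal equicontinuous system $(X,T)$ (orbit closures in equicontinuous systems are minimal) and a continuous $h: X \to \R$. Such systems are isomorphic to rotations on compact abelian groups, so we identify $X$ with a compact abelian group $G$ and $T$ with translation by some $a \in G$ with dense orbit; then $\beta = \int_G h \, d\mu$, where $\mu$ is the Haar measure. Using the Peter--Weyl theorem, I uniformly approximate $h$ by a trigonometric polynomial $P(y) = \beta + \sum_{\chi \in F} c_\chi \chi(y)$, where $F$ is a finite set of nontrivial characters and the constant term is adjusted so that $\int_G P \, d\mu = \beta$; then $r := h - P$ is continuous with $\int_G r \, d\mu = 0$.

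The next step is to decompose
\[
\sum_{i=0}^{m-1} \big( f(n+i) - \beta \big) \;=\; A_m(n) + R_m(n),
\]
where $A_m(n) := \sum_{\chi \in F} c_\chi \chi(T^n x_0) \cdot \frac{\chi(a)^m - 1}{\chi(a) - 1}$ (with $\chi(a) \neq 1$ since $\langle a\rangle$ is dense in $G$) and $R_m(n) := \sum_{i=0}^{m-1} r(T^{n+i} x_0)$. The main term $A_m$ can be bounded uniformly in $n$ by taking $ma$ in a sufficiently small neighborhood $U$ of $0 \in G$: then $|\chi(a)^m - 1| = |\chi(ma) - 1|$ is small for each $\chi \in F$, giving $|A_m(n)| < \eps/2$. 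The set $B := \{m \in \N : ma \in U\}$ is a \bz{} set by \cref{lemma_equicontinuous_return_times}.

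The heart of the argument --- and its main obstacle --- is controlling $R_m(n)$, whose sup norm can grow linearly in $m$, so crude estimates do not suffice. The plan is to combine two weak properties. First, the telescoping identity $R_m(n+1) - R_m(n) = r(T^{n+m} x_0) - r(T^n x_0)$ and uniform continuity of $r$ show that, after further shrinking $U$, $|R_m(n+1) - R_m(n)| < \eps/2$ for all $n \in \N$ and all $m \in B$. Second, unique ergodicity of the minimal equicontinuous system $(X,T)$ yields that the Birkhoff averages $\frac{1}{N} \sum_{n=0}^{N-1} R_m(T^n x_0)$ converge to $\int_G R_m \, d\mu = m \int_G r \, d\mu = 0$ as $N \to \infty$. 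Now suppose for contradiction that $|R_m(n)| \geq \eps/2$ for all $n$: the small consecutive differences preclude sign changes (switching between values of magnitude $\geq \eps/2$ and opposite signs at adjacent $n$'s would require a jump of at least $\eps$), so $R_m(T^n x_0)$ must have constant sign, forcing every Birkhoff average to be bounded away from $0$ --- contradicting its convergence. Hence some $n$ satisfies $|R_m(n)| < \eps/2$, and combined with the uniform bound on $A_m$, this yields \eqref{eqn_skew_sum_is_close_to_mean} for every $m \in B$.
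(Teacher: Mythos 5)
Your proof is correct, and the heart of it — a sequence with small consecutive steps whose Birkhoff average tends to a prescribed value must come close to that value, because otherwise sign-constancy would bound the average away from it — is exactly the idea the paper uses. But the decomposition of $\sum_{i=0}^{m-1}(f(n+i)-\beta)$ into a Fourier main term $A_m$ and a residual $R_m$ via Peter--Weyl is a detour: the very argument you develop for $R_m$ applies verbatim to the full sum. Write $f_m(n) := \sum_{i=0}^{m-1} f(n+i)$; the telescoping identity $f_m(n+1) - f_m(n) = f(n+m) - f(n)$ shows $f_m$ takes $\eps/2$-steps precisely when $m$ lies in the set of $\eps/2$-almost periods of $f$ (a Bohr$_0$ set by Definition \ref{def_bohrap_sequence}, with no need to pass through a group rotation and shrink a neighborhood $U$), and the mean of $f_m$ is $m\beta$ since shifting $f$ preserves its mean. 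Sign-constancy then yields \eqref{eqn_skew_sum_is_close_to_mean} immediately, and that is the entire proof in the paper. Your trigonometric-polynomial step, the evaluation of the geometric sums $\frac{\chi(a)^m-1}{\chi(a)-1}$, and the requirement $\chi(a)\neq 1$ are machinery to isolate a main term $A_m$ that there was never any need to separate off — and this in turn drags in unique ergodicity, Peter--Weyl, and the structure theorem for minimal equicontinuous systems as prerequisites that the two-line telescoping argument avoids entirely.
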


\begin{proof}
Let $\eps > 0$. Let $B \subseteq \N$ be the set of $\eps / 2$-almost periods for $f$. The set $B$ is a \bz{} set that we will show satisfies the conclusions of the proposition.

Let $m \in B$. Define $f_m: \N \to \R$ by
\[f_m(n) = \sum_{i=0}^{m-1} f(n + i),\]
and note that $f_m$ has mean $m \beta$.  Because $m$ is an $\eps / 2$-almost period for $f$, the sequence $f_m$ takes ``$\eps$-steps,'' in the sense that for all $n \in \N$, $|f_m(n+1) - f_m(n)| < \eps.$  Since $f_m$ has mean $m \beta$ and it takes $\eps$-steps, there exists $n \in \N$ for which $|f_m(n) - m \beta| < \eps$, as was to be shown.
\end{proof}

The following theorem proves \cref{maintheorem_katznelson_for_skew_towers} in the case of a single skew product extension over an equicontinuous system.

\begin{theorem}
\label{theorem_skew_extensions_of_equicontinuous_have_kv_property}
Let $(X,T)$ be an equicontinuous system, and let $h: X \to \T$ be continuous. The skew product system $(X \times \T, T_h)$ has \bz{} large returns.
\end{theorem}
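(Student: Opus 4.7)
The plan is to mirror the special case proved above, with \cref{prop_ap_sequences_near_mean_along_bz_set} playing the role of the intermediate value theorem and \cref{lemma_equicontinuous_return_times} replacing the direct Bohr-control of the base rotation. First, by passing to a minimal subsystem $X' \subseteq X$ and observing that $\dyret_\eps(X' \times \T, T_h|_{X' \times \T}) \subseteq \dyret_\eps(X \times \T, T_h)$, I may assume $(X,T)$ is minimal equicontinuous, so that $X$ is realized as a translation $x \mapsto x + g_0$ on a compact abelian metric group $G$. Fix $\eps > 0$. By \cref{lemma_equicontinuous_return_times}, the set $B_1 := \{m \in \N : \sup_{x \in X} d_X(x, T^m x) < \eps/2\}$ is a \bz{} set, so it suffices to find a \bz{} subset $B \subseteq B_1$ such that for each $m \in B$ some $x \in X$ has $\|h_m(x)\| < \eps/2$; the pair $(x, 0)$ will then witness $m \in \dyret_\eps(X \times \T, T_h)$.

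I would then split into two cases depending on whether $h : X \to \T$ admits a continuous lift $\tilde h : X \to \R$. In the \emph{liftable} case, fix any $x_0 \in X$ and set $\tilde f(n) := \tilde h(T^n x_0)$; by \cref{lemma_equicontinuous_observables_are_ap}, $\tilde f$ is an $\R$-valued almost periodic sequence with some mean $\beta \in \R$. Applying \cref{prop_ap_sequences_near_mean_along_bz_set} to $\tilde f$ yields a \bz{} set $B_2$ such that for each $m \in B_2$ there exists $n \in \N$ with $|\tilde h_m(T^n x_0) - m\beta| < \eps/4$. The condition $\|m\beta\| < \eps/4$ defines a further \bz{} set $B_3$, and for $m \in B_1 \cap B_2 \cap B_3$ the point $x := T^n x_0$ satisfies $\|h_m(x)\| \leq |\tilde h_m(x) - m\beta| + \|m\beta\| < \eps/2$. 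In the \emph{non-liftable} case, the obstruction to lifting $h$ factors through a compact Lie quotient $G_i$ of $G$ and determines a circle direction in $G$ along which $h$ winds non-trivially; the restriction of $h_m$ to this circle is a map $\T \to \T$ of non-zero degree for every $m \neq 0$, hence surjective, so the equation $h_m(x) = 0$ is solvable directly, and intersecting with $B_1$ already delivers a \bz{} set of good $m$.

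The main obstacle is the non-liftable case: one must verify that the lifting obstruction for a general compact abelian $G$ can be realized on some $1$-parameter subgroup (via approximation by finite-dimensional Lie quotients), and that the $x$ furnished by the surjectivity of $h_m$ is compatible with the $X$-return condition $m \in B_1$. The authors' remark that the proof has ``less to do with connectedness and more to do with the fact that the real numbers are well-ordered'' suggests that the liftable case -- where \cref{prop_ap_sequences_near_mean_along_bz_set} supplies all the essential content via the total ordering of $\R$ -- is meant to be the conceptual core, with the non-liftable case either absorbed into it by a cohomological decomposition $h = H + \chi$ (reducing to the liftable part after choosing $x$ to cancel the winding contribution) or handled by a short separate winding-number argument; in particular, in the totally disconnected case there is no non-trivial winding and Case A alone suffices.
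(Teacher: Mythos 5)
Your ``liftable'' case is essentially the same argument the paper uses, and it is correct. The problem is your case split and the treatment of the other case. You split on whether $h$ itself admits a continuous lift to $\R$, and in the non-liftable case you propose to locate a ``circle direction in $G$'' along which $h$ has non-zero degree, so that $h_m$ is surjective for all $m$. This breaks down: a minimal equicontinuous system can be a rotation of a solenoid, a connected compact abelian group that contains no circle subgroups (indeed, no non-constant continuous images of $\T$ at all, since it is not path-connected). For such $G$ there exist non-liftable $h: G \to \T$ (the Čech group $\check{H}^1(G;\Z)$ is non-trivial), but no loop in $G$ on which to evaluate a winding number, so the step ``restriction of $h_m$ to this circle is a degree-$m$ map'' has no content. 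The conclusion you want ($0 \in h_m(X)$ for every $m$) is in fact true here, but getting it requires a genuinely different argument — e.g.\ that $\check{H}^1(G;\Z)$ is torsion-free so $[h_m] = m[h] \neq 0$, together with the observation that any $\T$-valued map missing $0$ factors through the contractible set $\T\setminus\{0\}$ and so must be liftable — and none of this appears in your sketch. You flag this as the ``main obstacle'' but leave it open.

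The paper sidesteps all of this with a cleaner dichotomy that never mentions winding numbers or cohomology. Case 1: $0 \in h_m(X)$ for every $m\in\N$. Then for each $m$ in the \bz{} set $B$ coming from \cref{lemma_equicontinuous_return_times}, pick $x$ with $h_m(x)=0$; the pair $(x,0)$ returns within $\eps$ because the fiber coordinate is exactly preserved. Case 2: $0 \notin h_{m_0}(X)$ for some $m_0$. Using \cref{corollary_sufficient_to_prove_BR_property_for_power_of_system} one may replace $T$ by $T^{m_0}$ and $h$ by $h_{m_0}$ and thereby assume $m_0 = 1$, i.e.\ $0 \notin h(X)$. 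Now composing $h$ with the section $\rotpi : \T \to \R$ that is continuous away from $0$ gives a \emph{continuous} lift $H = \rotpi \circ h$ directly — no topology of $G$ is involved, only that $\T \setminus\{0\}$ is an interval — and the rest is your liftable-case argument with $H$ in place of $\tilde h$. So the two routes agree once the reduction is made, but the paper's dichotomy is chosen precisely so that the reduction to a liftable skewing function is elementary, whereas your dichotomy forces you into the topological considerations you were unable to resolve.
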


\begin{proof}
Because $(X,T)$ is equicontinuous, \cref{lemma_equicontinuous_return_times} gives that the set
\begin{align}
\label{eqn_def_of_set_of_returns_in_proof_of_first_thm_a}
    B \defeq \big\{ n \in \N \ \big| \ \sup_{x \in X} d_X(x, T^n x) < \eps / 2 \big\}
\end{align}
is a \bz{} set.

We consider two cases.  In Case 1, for all $m \in \N$, the point 0 is in the image of the map $h_m$, while in Case 2, there exists $m \in \N$ for which 0 is not in the image of the map $h_m$.

Suppose we are in Case 1.  To see that $(X,T)$ has \bz{} large returns, let $\eps > 0$.   We claim that $B \subseteq \dyret_\eps(X\times \T,T_h)$. Let $m \in B$. Since 0 is in the image of the map $h_m$, there exists $x \in X$ such that $h_m(x) = 0$.  It follows that $T_h^m (x,0) = (T^m x, 0)$, whereby $m \in \dyret_\eps(X\times \T,T_h)$, as was to be shown.

Suppose we are in Case 2 so that there exist $m_0 \in \N$ for which $0\notin h_{m_0}(X)$. We claim that we can assume that $m_0 = 1$. Indeed, to prove that $(X,T)$ has the \bz{} large returns, it suffices by \cref{corollary_sufficient_to_prove_BR_property_for_power_of_system} to prove that the system $(X\times \T,T_h^{m_0})$ has \bz{} large returns.  Define $S = T^{m_0}$. Note that $T_h^{m_0} = S_{h_{m_0}}$ (following the notation established in \cref{def_skew_prod_systems}), so that $(X\times \T,T_h^{m_0}) = (X \times \T, S_{h_{m_0}})$.  Since $(X, S)$ is equicontinuous and $h_{m_0}: X \to \T$ is continuous, we can proceed by replacing $S$ by $T$, $h_{m_0}$ by $h$, and under the assumption that $0$ is not in the image of the map $h: X \to \T$.

Let $\pi: \R \to \T$ be the quotient map from $\R$ to $\T$, let $\rotpi: \T \to \R$ be a section of $\pi$ that is continuous at all points of $\T$ except 0, and define $H = \rotpi \circ h$. Since $0$ is not in the image of $h$, the map $H\colon X \to \R$ is continuous. Moreover, $\pi \circ H = h$ by construction.
Fix $x_0 \in X$ and define $f: \N \to \R$ by $f(n) = H(T^{n} x_0)$. The system $(X, T)$ is equicontinuous, so by \cref{lemma_equicontinuous_observables_are_ap} the sequence $f$ is almost periodic.

Let $\eps > 0$; our aim is to show that $\dyret_\eps (X \times \T, T_h)$ is a \bz{} set.  Let $\beta$ be the mean of $f$, and let $B' \subseteq \N$ be the \bz{} set from \cref{prop_ap_sequences_near_mean_along_bz_set} (with $\eps / 4$ as $\eps$).
Define
\[B'' = B \cap B' \cap \big\{ m \in \N \ \big| \ \|m \beta \| < \eps / 4 \big\}.\]
Note that $B''$ is a \bz{} set.  We will show that $B'' \subseteq \dyret_\eps (X \times \T, T_h)$.

Let $m \in B''$. It follows by \cref{prop_ap_sequences_near_mean_along_bz_set} and the fact that $\|m \beta \| < \eps / 4$ that there exists $n \in \N$ such that
\begin{align}
\label{eqn_skew_sum_is_close_to_zero_on_torus_in_proof}
    \left\| \sum_{i = 0}^{m-1} f(n + i) \right\| < \eps / 2.
\end{align}
Define $x = T^{n} x_0$.  Note that $f(n+i) = H(T^{i} x)$. It follows by the definition of $H$ and \eqref{eqn_skew_sum_is_close_to_zero_on_torus_in_proof} that
\[\big\|h_m(x) \big\| < \eps / 2.\]
Since $m \in B$, we have additionally that $d_X(T^{m} x, x) < \eps / 2$.  Combining these facts, we see that
\begin{align*}
    d_{X\times \T} \big (T_h^{m} (x,0), (x,0) \big) &= d_{X\times \T} \big( (T^{m} x, h_m(x)), (x, 0) \big) \\
    &= d_{X} \big( T^{m} x, x \big)  + \| h_m(x) \| < \eps.
\end{align*}
It follows that $m \in \dyret_\eps (X \times \T, T_h)$, as was to be shown.
\end{proof}

In the following theorem, we establish the inductive step for an iterative procedure that allows us to handle the multiple skew product extensions that appear in \cref{maintheorem_katznelson_for_skew_towers}.

\begin{theorem}
\label{theorem_dynamical_inductive_step}
Let $(X,T)$ be a system and $h: X \to \T$ be continuous.  If the skew product system $(X \times \T, T_h)$ has \bz{} large returns, then for all continuous $g: \T \to \T$, the skew product system $(X \times \T^2, T_{h,g}$) defined by
\[T_{h,g} (x,t,s) = \big(T_h(x,t), s+g(t) \big) = \big( Tx, t + h(x), s + g(t) \big)\]
has \bz{} large returns. 
\end{theorem}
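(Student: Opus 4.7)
The plan is to reduce the problem for $(X \times \T^2, T_{h,g})$ to the hypothesized \bz{} large returns for $(X \times \T, T_h)$, imitating the winding-number/mean-value dichotomy from \cref{theorem_skew_extensions_of_equicontinuous_have_kv_property}. First I would compute
\[ T_{h,g}^m(x,t,s) = \big( T^m x,\, t + h_m(x),\, s + \tilde g_m(x,t) \big), \qquad \tilde g_m(x,t) \defeq \sum_{i=0}^{m-1} g\big(t + h_i(x)\big), \]
so that, using the $L^1$ product metric,
\[ d_{X \times \T^2}\big((x,t,s), T_{h,g}^m(x,t,s)\big) = d_X(x, T^m x) + \|h_m(x)\| + \|\tilde g_m(x,t)\|. \]

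The crucial observation is that the first two summands depend only on $x$, not on $t$ or $s$.  By hypothesis, for any $\eps > 0$ the set $B_1 \defeq \dyret_{\eps/2}(X \times \T, T_h)$ is a \bz{} set; for every $m \in B_1$ there exists $x \in X$ with $d_X(x, T^m x) + \|h_m(x)\| < \eps/2$, and for this $x$ the bound persists for \emph{every} $t \in \T$.  With such $m$ and $x$ fixed, the task reduces to finding $t \in \T$ for which $\|\tilde g_m(x,t)\|$ is small.

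To do so, I would split into cases according to the winding number of $g$.  If $g$ has nonzero winding number $w$, then $t \mapsto \tilde g_m(x,t)$ is a sum of $m$ shifts of $g$ and hence has winding number $m w \neq 0$; such a map is surjective, so there exists $t$ with $\tilde g_m(x,t) = 0$, and we conclude $B_1 \subseteq \dyret_{\eps}(X \times \T^2, T_{h,g})$.  Otherwise, $g$ has winding number zero and admits a continuous lift $G: \T \to \R$ with mean $\gamma \defeq \int_\T G(u)\,du$.  For fixed $x$, the continuous function $t \mapsto \tilde G_m(x,t) \defeq \sum_{i=0}^{m-1} G(t + h_i(x))$ has integral $m\gamma$ over $\T$, so by the intermediate value theorem it attains the value $m\gamma$ at some $t$, yielding $\|\tilde g_m(x,t)\| = \|m\gamma\|$.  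Intersecting $B_1$ with the \bz{} set $B_2 \defeq \{m \in \N : \|m\gamma\| < \eps/2\}$ gives $B_1 \cap B_2 \subseteq \dyret_{\eps}(X \times \T^2, T_{h,g})$ (invoking \cref{lemma_dilates_of_bohr_are_bohr} or \cref{rmk_equiv_forms_of_bohr_and_bohrzero} for closure under intersection).  In either case $\dyret_\eps(X \times \T^2, T_{h,g})$ contains a \bz{} set, proving the theorem.

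The main obstacle is not a single technical estimate but rather the shift in perspective that makes the argument short: once one notices that the $T_h$-return distance is constant along each fiber $\{x\} \times \T$, the new coordinate $t$ becomes a free parameter that can be used to kill the third summand via a direct adaptation of the winding-number/mean argument of \cref{theorem_skew_extensions_of_equicontinuous_have_kv_property}.  This same fiber-independence is what permits the theorem to be iterated — with the previously built tower playing the role of $(X \times \T, T_h)$ — to deduce \cref{maintheorem_katznelson_for_skew_towers}.
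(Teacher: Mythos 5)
Your proof is correct and follows essentially the same route as the paper's: decompose the return distance fiberwise, observe that the $T_h$-return distance is unchanged under rotation in the $t$-coordinate, and then dispatch the third summand by the winding-number/lift-and-mean-value dichotomy. Your use of $\eps/2$ in both $B_1$ and $B_2$ is in fact a touch more careful than the paper, which works with a single $\eps$ throughout and would otherwise only land in $\dyret_{2\eps}$ in the zero-winding case; this is a cosmetic improvement, not a different argument.
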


\begin{proof}
Let $g: \T \to \T$ be continuous.  If $g$ has non-zero winding number, put $\gamma = 0$.  If $g$ has winding number equal to zero, let $G: \T \to \R$ be a continuous lift of $g$ to $\R$, and put $\gamma = \int_{\T} G(t) \ dt$.

Let $\eps > 0$.  It follows from our assumptions that the set
\[B \defeq \dyret_\eps(X \times \T, T_h) \cap \big\{n \in \N \ \big| \ \| n \gamma \big\| < \eps \big\}\]
is a \bz{} set.  We will show that $B \subseteq \dyret_\eps(X \times \T^2, T_{h,g})$.

Let $m \in B$. To show that $m \in \dyret_\eps(X \times \T^2, T_{h,g})$, we will show that there exists $(x, t, s) \in X \times \T^2$ such that
\begin{align}
\label{eqn_goal_to_show_recurrence_in_double_skew}
    d_{X \times \T^2}\big( (x,t,s), T_{h,g}^m (x,t,s) \big) < \eps.
\end{align}
Note that
\[T_{h,g}^m(x,t,s)=\big(T^m x, t+h_m(x), s+ g_{x;m}(t) \big),\]
where the third coordinate function, denoted by $g_{x;m}: \T \to \T$, depends on $x$ and is defined by
\begin{align}
\label{eqn_def_of_gmx}
    g_{x;m}(t) = \sum_{i=0}^{m-1}g\big(t+h_{i}(x)\big).
\end{align}
For all $x \in \T$, the winding number of $g_{x;m}$ is $m$ times the winding number of $g$, and, in the case that $g$ has winding number equal to zero, the function $G_{x;m}: \T \to \R$, defined by replacing $g$ with $G$ in \eqref{eqn_def_of_gmx}, is a continuous lift of $g_{x;m}$ to $\R$ with $\int_{\T}G_{x;m} (t) \ dt = m \gamma$.

Since $m \in \dyret_\eps(X \times \T, T_h)$, there exists $(x,t) \in X \times \T$ such that
\begin{align}
\label{eqn_point_return_in_base_skew_product}
    d_{X \times \T} \big( (x,t), T_h^m (x,t) \big) < \eps.
\end{align}
Since $T_h$ commutes with rotation in the second coordinate, it follows, in fact, that \eqref{eqn_point_return_in_base_skew_product} holds for all $t \in \T$.

If $g$ has non-zero winding number, then so does $g_{x;m}$; it follows by the intermediate value theorem that there exists $t \in \T$ such that $g_{x;m}(t) = 0$.  On the other hand, if $g$ has winding number equal to zero, then the mean value theorem for integrals combines with the fact that $G_{x;m}$ is a continuous lift of $g_{x;m}$ to $\R$ with mean $m \gamma$ to guarantee the existence of a $t \in \T$ such that $G_{x;m}(t) = m \gamma$.  Since $m \in B$, it follows that $\| g_{x;m}(t)\| < \eps$.

In either case, it follows from \eqref{eqn_point_return_in_base_skew_product} that for all $s \in \T$, the point $(x,t,s)$ satisfies \eqref{eqn_goal_to_show_recurrence_in_double_skew}, as was to be shown.
\end{proof}

Combining Theorems \ref{theorem_skew_extensions_of_equicontinuous_have_kv_property} and \ref{theorem_dynamical_inductive_step}, we can prove \cref{maintheorem_katznelson_for_skew_towers}.

\begin{proof}[Proof of \cref{maintheorem_katznelson_for_skew_towers}]
According to the reformulation of Katznelson's Question in Question D1 in \cref{sec_dynamically_equivalent_forms}, we need to prove that the systems described in the statement of \cref{maintheorem_katznelson_for_skew_towers} have \bz{} large returns.  That fact follows by a simple induction argument, appealing to \cref{theorem_skew_extensions_of_equicontinuous_have_kv_property} for the base case and \cref{theorem_dynamical_inductive_step} for the inductive step.
\end{proof}

\begin{remark}
\label{remark_details_on_hidden_freq}
The proofs of Theorems \ref{theorem_skew_extensions_of_equicontinuous_have_kv_property} and \ref{theorem_dynamical_inductive_step} tell us which frequencies it suffices to control to ensure recurrence in a tower of skew product extensions of the type described in \cref{maintheorem_katznelson_for_skew_towers}. For those skewing functions $h_i$ with zero winding number, we must control for the average $\int_\T H_i(t) \ dt$ of a continuous lift of $h_i$; those skewing functions with non-zero winding number do not introduce any additional frequencies. The example in \cref{maintheorem_hidden_frequencies} shows that controlling for the averages of the skewing functions with zero winding number is indeed necessary for recurrence.  In the case of more general isometric extensions of equicontinuous systems, we do not know how to identify frequencies beyond those in the equicontinuous factor that influence recurrence.
\end{remark}

\begin{corollary}
Sets of Bohr recurrence are sets of recurrence for factors, proximal extensions (cf. footnote \ref{footnote_proximal_definition}), and inverse limits of the types of skew product tower systems described in the statement of \cref{maintheorem_katznelson_for_skew_towers}.
\end{corollary}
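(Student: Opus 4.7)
The plan is to translate the corollary, via the equivalence in \cref{sec_dynamically_equivalent_forms}, into the assertion that every factor, proximal extension, and inverse limit of the skew product tower systems in \cref{maintheorem_katznelson_for_skew_towers} has \bz{} large returns. Since \cref{maintheorem_katznelson_for_skew_towers} already secures this property for the towers themselves, the task reduces to checking that the class of systems with \bz{} large returns is closed under each of the three operations.

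For factors $\pi\colon (X,T)\to (Y,T)$, I would use uniform continuity of $\pi$: given $\eps>0$, choose $\delta>0$ such that $d_X(x,y)<\delta$ implies $d_Y(\pi x,\pi y)<\eps$. A point witnessing $\delta$-recurrence in $X$ then projects to a point witnessing $\eps$-recurrence in $Y$, yielding $\dyret_\delta(X,T)\subseteq\dyret_\eps(Y,T)$ and hence the descent of \bz{} large returns. For inverse limits $(X,T)=\varprojlim(X_n,T_n)$ with projections $\pi_n\colon X\to X_n$, I would use the standard fact that for every $\eps>0$ there exist $n$ and $\delta>0$ such that $d_{X_n}(\pi_n x,\pi_n y)<\delta$ forces $d_X(x,y)<\eps$ (a consequence of the cylinder-set generation of the inverse-limit topology). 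Surjectivity of $\pi_n$ then lifts $\delta$-recurrent points from $X_n$ to $\eps$-recurrent points in $X$, giving $\dyret_\delta(X_n,T_n)\subseteq\dyret_\eps(X,T)$; combined with \bz{} large returns for every $X_n$, this yields \bz{} large returns for $X$.

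The main obstacle is the proximal-extension case, which I would settle by invoking Proposition~3.8 of Host--Kra--Maass \cite{HostKraMaass2016}: every set of recurrence for the base of a proximal extension of minimal systems remains a set of recurrence for the total space. Given a proximal extension $\pi\colon(W,T)\to(Y,T)$ over a skew product tower $Y$, I would pass to a minimal subsystem $W_0\subseteq W$; then $Y_0=\pi(W_0)$ is a minimal subsystem of $Y$ onto which $W_0$ projects by a proximal extension of minimal systems. After confirming that $Y_0$ retains \bz{} large returns---which can be arranged by rerunning the arguments in the proofs of \cref{theorem_skew_extensions_of_equicontinuous_have_kv_property} and \cref{theorem_dynamical_inductive_step} inside $Y_0$, using that equicontinuity of the base and the intermediate/mean-value selections of fiber points both pass to closed invariant subsets---Proposition~3.8 transfers \bz{} large returns to $W_0$, and the inclusion $\dyret_\eps(W_0,T)\subseteq\dyret_\eps(W,T)$ extends the conclusion to $W$ itself.
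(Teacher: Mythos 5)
Your strategy is the paper's: reduce, via the equivalences in \cref{sec_dynamically_equivalent_forms}, to showing that the class of systems with \bz{} large returns is closed under factors, inverse limits, and proximal extensions, then invoke \cref{maintheorem_katznelson_for_skew_towers}. Your uniform-continuity argument for factors and your cylinder-topology argument for inverse limits are both correct and fill in details the paper leaves as ``easy to check,'' and for proximal extensions you cite the same ingredient the paper does, \cite[Proposition 3.8]{HostKraMaass2016}.

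The one place where your write-up outruns what it actually establishes is in the proximal-extension case. You rightly notice that \cite[Proposition 3.8]{HostKraMaass2016} concerns extensions of \emph{minimal} systems, so after passing to a minimal subsystem $W_0\subseteq W$ with image $Y_0=\pi(W_0)$, one needs to know that the minimal subsystem $Y_0$ of the skew product tower itself has \bz{} large returns --- and since $\dyret_\eps(Y_0,T)\subseteq\dyret_\eps(Y,T)$, this is \emph{not} automatic from \cref{maintheorem_katznelson_for_skew_towers}. (The paper's one-line appeal to Proposition~3.8 does not address this point explicitly either.) However, your proposed justification --- that one can ``rerun the arguments \ldots\ inside $Y_0$'' because ``the intermediate/mean-value selections of fiber points \ldots\ pass to closed invariant subsets'' --- is not true as stated: in the proofs of \cref{theorem_skew_extensions_of_equicontinuous_have_kv_property} and \cref{theorem_dynamical_inductive_step}, the recurring point's fiber coordinate $t$ is pinned down by the intermediate/mean-value theorem, and nothing forces that choice of $t$ to yield a point of $Y_0$. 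What does work, and needs to be said, is more delicate: in \cref{theorem_skew_extensions_of_equicontinuous_have_kv_property} the \emph{last} $\T$-coordinate of the recurring point is genuinely free (because $T_h$ commutes with rotation in that coordinate) while its $X$-coordinate lies on the orbit of a base point $x_0$ that may be chosen with $(x_0,t_0)\in Y_0$; and for the inductive step \cref{theorem_dynamical_inductive_step} one can split into cases using \cref{theorem_not_minimal_implies_equicontinuous}, according to whether the intermediate skew product level is minimal or equicontinuous. Without some such argument, the assertion that $Y_0$ has \bz{} large returns is a claim, not a proof.
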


\begin{proof}
It is easy to check that factors of systems and inverse limits of families of systems that have \bz{} large returns also have \bz{} large returns.  It is a consequence of \cite[Proposition 3.8]{HostKraMaass2016} that proximal extensions of systems with \bz{} large returns have \bz{} large returns. Thus, the statement in question is an immediate corollary of \cref{maintheorem_katznelson_for_skew_towers} and the equivalences between the different forms of Katznelson's Question described in \cref{sec_dynamically_equivalent_forms}.
\end{proof}

\subsection{Skew product extensions by the 1-torus}

The two main results in this section concern skew-product extensions of equicontinuous systems by the 1-torus. \cref{theorem_not_minimal_implies_equicontinuous} concerns general equicontinuous systems; the result will be useful in the proof of Theorem C but may also be of independent interest.  \cref{theorem_condition_for_beta_skew_to_be_minimal} is a complement to the classic theorem of Gottschalk and Hedlund; we elaborate on this in \cref{rmk_gottschalk_hedlund_connection} after the proof.

\begin{theorem}
\label{theorem_not_minimal_implies_equicontinuous}
Let $(X, T)$ be a minimal, equicontinuous system, and let $h: X \to \T$ be continuous.  If the skew-product system $(X \times \T, T_h)$ is not minimal, then it is equicontinuous.
\end{theorem}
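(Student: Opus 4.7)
The plan is to extract from a minimal subsystem of $(X \times \T, T_h)$ a cocycle identity that exhibits $T_h$ as a degree-$n$ cover of a product rotation, and then conclude by a covering-space argument.

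Since $(X \times \T, T_h)$ is not minimal, it admits a proper minimal subsystem $Y \subsetneq X \times \T$. The fiber rotation $R_s \colon (x,t) \mapsto (x, t+s)$ commutes with $T_h$ for every $s \in \T$, so the stabilizer $H_Y \defeq \{s \in \T : R_s Y = Y\}$ is a closed subgroup of $\T$. It is proper: were $H_Y = \T$, the $\T$-invariant set $Y$ would project onto $X$ (by minimality of $(X,T)$) and hence coincide with $X \times \T$. Therefore $H_Y = \tfrac{1}{n}\Z/\Z$ for some integer $n \geq 1$. A short argument shows that every fiber $Y_x = \{t : (x,t) \in Y\}$ is then a single $H_Y$-coset: if $t_1, t_2 \in Y_x$, the minimal sets $Y$ and $R_{t_2 - t_1} Y$ share the point $(x, t_2)$, forcing $Y = R_{t_2 - t_1} Y$ and $t_2 - t_1 \in H_Y$. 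This produces a continuous map $\psi \colon X \to \T/H_Y$, $\psi(x) = Y_x$, and the $T_h$-invariance of $Y$ yields the cocycle identity $q \circ h = \psi \circ T - \psi$, where $q \colon \T \to \T/H_Y$ is the quotient.

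Identifying $\T/H_Y$ with $\T$ via multiplication by $n$, one obtains a continuous $\Psi \colon X \to \T$ satisfying $n h = \Psi \circ T - \Psi$ in $\T$. A direct computation then verifies that the map
\[
\Xi \colon X \times \T \to X \times \T, \qquad \Xi(x,t) = \big(x, \, nt - \Psi(x)\big),
\]
satisfies $\Xi \circ T_h = (T \times \operatorname{id}_\T) \circ \Xi$. The map $\Xi$ is a degree-$n$ covering whose deck group is $H_Y$ acting by the fiber rotations $R_{k/n}$, and this deck action commutes with $T_h$.

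The target $(X \times \T, T \times \operatorname{id}_\T)$ is a product of equicontinuous systems and hence equicontinuous. To transfer equicontinuity to the source, one argues that if two source points are sufficiently close then their images lie in the same local sheet of $\Xi$ and remain uniformly close under the equicontinuous target dynamics; since $T_h$ commutes with the deck action, the corresponding iterates in the source stay in the same sheet of $\Xi$ and therefore remain uniformly close. Thus $(X \times \T, T_h)$ is equicontinuous. The main technical obstacle is the rigidity step---establishing that $H_Y$ is proper and that each fiber $Y_x$ is a single $H_Y$-coset---since once the cocycle identity is in hand, everything else proceeds by standard covering-space manipulations.
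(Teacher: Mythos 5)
Your argument follows the same overall route as the paper's proof. The identification of the proper stabilizer $H_Y$ (the paper's $F$), the continuity of the graph map $\psi$ (the paper's $g: X \to K = \T/F$), and the cocycle identity $q \circ h = \psi \circ T - \psi$ are the paper's construction under different names. Where you diverge is in repackaging the resulting structure as a single globally defined degree-$n$ covering $\Xi : X \times \T \to X \times \T$ intertwining $T_h$ with the equicontinuous product $T \times \operatorname{id}_\T$; the paper instead works with an atlas of local lifts $G_i$ of $g$ over small balls and never assembles a global map, and it also passes to a carefully chosen power $T^n$ to streamline the bookkeeping. Your covering-space framing is conceptually cleaner and buys a tidier statement of what remains to be checked.

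However, you misidentify the technical center of gravity of this theorem. You flag the rigidity step (that $H_Y$ is proper and each $Y_x$ is a single $H_Y$-coset) as ``the main technical obstacle,'' yet this is genuinely short, as your own argument shows; in the paper it is essentially left as an exercise. What occupies most of the paper's proof is exactly the step you dismiss as ``standard covering-space manipulations'': that a finite covering of an equicontinuous system by a lifted dynamics is again equicontinuous. This is not a formality. For every $m$ the points $\Xi(T_h^m(x,t))$ and $\Xi(T_h^m(x',t'))$ are close, but each $\Xi$-fiber has $n$ elements separated by exactly $1/n$, and nothing a priori prevents the iterates from jumping sheets as $m$ grows; the remark that ``$T_h$ commutes with the deck action'' does not by itself close this gap. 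What closes it is an induction on $m$ that threads together three uniform ingredients: the uniform continuity of $T_h$ (so one step moves you by less than $1/(3n)$, say), the $1/n$-separation within fibers (to single out the correct preimage of a nearby target point), and a uniform modulus of continuity for local sections of $\Xi$ (so that closeness downstairs quantitatively forces closeness upstairs once the sheet is pinned down). This is precisely what the paper's local-lifts claim, its equicontinuity-constant claim, and the ensuing induction accomplish, and together they constitute the bulk of the proof. Your outline is sound and can be completed along these lines, but the assertion that the finish ``proceeds by standard covering-space manipulations'' obscures the most substantial part of the argument.
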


\begin{proof}
Fix $x_0 \in X$, and let $Z = \overline{\{T_h^n (x_0,0) \ | \ n \in \N \}}$.  Making use of the family of automorphisms of $(X \times \T, T_h)$ described by $(x,t) \mapsto (x,t+s)$, $s \in \T$, there exists a closed subgroup $F$ of $\T$ such that $Z \cap (\{x_0\} \times \T) = \{x_0\} \times F$; the details are left to the reader as an exercise.  Since $(X,T)$ is minimal, the system $(X \times \T, T_h)$ is minimal if and only if $F = \T$.

Suppose that the system $(X \times \T, T_h)$ is not minimal so that $F \neq \T$; there exists $\ell \in \N$ such that $F = \{0, \ldots, \ell-1\} / \ell$.  We will show that the system $(X \times \T, T_h)$ is equicontinuous.  Using the family of second-coordinate rotation automorphisms, it suffices to show that the system $(Z, T_h)$ is equicontinuous.

Let $K = \T / F$; it is a 1-torus with metric induced by $\| \cdot \|_K$, the Euclidean distance to zero. Let $\pi_K: \T \to K$ be the quotient map, and consider the skew product system $(X \times K, T_{\pi_K \circ h})$, where we endow $X \times K$ with the $L^1$ metric.  Let $Y = \overline{\{T_{\pi_K \circ h}^n (x_0,0) \ | \ n \in \N \}}$. Following the same reasoning as above, by the definition of $F$, for all $x \in X$, $|Y \cap (\{x\} \times K)| = 1$.

Let $g: X \to K$ be such that for all $x \in X$, $Y \cap (\{x\} \times K) = \{\big(x,g(x) \big)\}$; that is, the graph of $g$ is equal to $Y$. It follows that
\[Z = \bigcup_{x \in X} \Big(\big\{x\} \times \pi_K^{-1} \big\{g(x) \big\} \Big).\]
Since $Y$ is closed, the map $g$ is continuous.  Note that in the system $(X \times K, T_{\pi_K \circ h})$, we have for all $x \in X$ that $T_{\pi_K \circ h} (x,g(x)) = (Tx,g(x) + \pi_K(h(x))) = (Tx, g(Tx))$, whereby $\pi_K \circ h_n = g\circ T^n - g$ for all $n\in\N\cup\{0\}$. 

We will leverage the fact that $F$ is discrete to show that the system $(Z, T_h)$ is equicontinuous. Recall that $|F|=\ell$.

\begin{claim}[Local lifts]
\label{claim_local_lifts}
There exists $\eta > 0$ such that for any ball $B \subseteq X$ of diameter at most $\eta$, there exists a continuous map $G: B \to \T$ such that $\pi_K \circ G = \restr{g}{B}$ (that is, $G$ is a continuous lift of $\restr{g}{B}$) and such that for all $x, x' \in B$, $\|G(x) - G(x') \| < 1/(4\ell)$.
\end{claim}

\begin{proof}
\renewcommand{\qedsymbol}{\rotatebox{45}{\#}}
Since $g: X \to K$ is continuous and $X$ is compact, there exists some $\eta>0$ such that for all $x,x'\in X$ with $d_X(x,x') \leq \eta$,
\begin{align}
\label{eqn_ucoX}
\|g(x)-g(x')\|_K\leq \frac{1}{8\ell}.
\end{align}
Let $B\subseteq X$ be any ball of diameter $\eta$ centered at a point $x_0\in X$. Let $w_0$ be any point in $\T$ for which $\pi_K(w_0)=g(x_0)$, and consider the interval
\[
I \defeq \left[w_0-\frac{1}{8\ell}, w_0+\frac{1}{8\ell}\right]\subseteq\T.
\] 
The projection map $\pi_K$ restricted to $I$, which we denote by $\varphi\colon I\to \pi_K(I)$, is continuous and surjective. Moreover, it is injective because the length of $I$ is smaller than $1/ (2\ell)$ and points that have the same image under $\pi_K$ are at least $1/\ell$ apart. Therefore the map $\varphi\colon I\to \pi_K(I)$ is a continuous bijection between compact spaces, which implies that it is a homeomorphism. 
Let $\varphi^{-1}\colon \pi_K(I)\to I$ denote its inverse.
Since $\pi_K(I)=\varphi(I)=\big[g(x)-1/(8\ell), g(x)+1/(8\ell) \big]$ and in light of \eqref{eqn_ucoX}, we have $g(B)\subseteq \varphi(I)$.
This ensures that the map $G=\varphi^{-1}\circ \restr{g}{B}$ is a well defined continuous function from $B$ to $\T$ satisfying $\|G(x) - G(x') \|\leq 1/(8\ell)< 1/(4\ell)$ for all $x, x' \in B$. Since we clearly have $\pi_K \circ G =\varphi\circ G= \varphi\circ \varphi^{-1}\circ \restr{g}{B}=\restr{g}{B}$, the proof is complete.
\end{proof}

Let $\{x_i\}_{i \in \mathcal{I}}$ be an $(\eta / 16)$-dense  subset of $X$.  For each $i \in \mathcal{I}$, define $B_i = B_{\eta / 2}(x_i)$ and appeal to \cref{claim_local_lifts} to find $G_i: B_i \to \T$, a continuous lift of $\restr{g}{B_i}$. Note that for all $i, j \in \mathcal{I}$ for which $B_i \cap B_j$ is non-empty, the function
\begin{align}
\label{eqn_consistency_of_atlas}
    \restr{G_i}{B_i \cap B_j} - \restr{G_j}{B_i \cap B_j} \text{ is a constant, $F$-valued function}.
\end{align}
Indeed, since both $G_i$ and $G_j$ are lifts of $g$, we have $\pi_K \circ \left(\restr{G_i}{B_i \cap B_j} - \restr{G_j}{B_i \cap B_j}\right) = 0$. Therefore, the function $\restr{G_i}{B_i \cap B_j} - \restr{G_j}{B_i \cap B_j}$ maps $B_i \cap B_j$ into $F$.  By \cref{claim_local_lifts}, the function
$\restr{G_i}{B_i \cap B_j} - \restr{G_j}{B_i \cap B_j}$ differs between two points on $B_i \cap B_j$ by at most $1/(2\ell)$.  Since any two points of $F$ are separated in distance by at least $1/\ell$, the conclusion follows.

Let $n \in \N$ be such that $\|T^n - \id\|_\infty$ $<\eta/16$; such an $n$ exists because $(X,T)$ is equicontinuous, and this $n$ will be fixed for the rest of the proof. For $i \in \mathcal{I}$, define $B_i' = B_{\eta / 4}(x_i)$, and note that $T^n B_i' \subseteq B_i$, so that $G_i \circ T^n$ is defined and is continuous on $B_i'$. 
Define $f_i: B_i' \to F$ by
\[f_i(x) = h_n(x) + G_i(x) - G_i(T^nx);\]
that $f_i$ takes values in $F$ can be seen by applying $\pi_K$ and using the fact that $\pi_K \circ h_n = g \circ T^n - g$.

Since $h_n$, $G_i$, and $G_i \circ T^n$ are all continuous on $B_i'$, the function $f_i$ is continuous. We claim that if $i, j \in \mathcal{I}$ are such that $B_i' \cap B_j' \neq \emptyset$, then $\restr{f_i}{B_i' \cap B_j'} = \restr{f_j}{B_i' \cap B_j'}$.
Indeed, for $x \in B_i' \cap B_j'$, there exists by \eqref{eqn_consistency_of_atlas} a value $c \in F$ such that $G_i(x) = G_j(x) + c$ and $G_i(T^n x) = G_j(T^n x)+c$.  It follows that
\begin{align}
\label{eqn_consistency_of_fis_to_get_f}
    \begin{aligned} f_i(x) &= h_n(x) + G_i(x) - G_i(T^nx) \\
    &= h_n(x) + \big(G_j(x) + c\big) - \big(G_j(T^nx) + c \big) \\
    &= h_n(x) + G_j(x) - G_j(T^nx) = f_j(x).\end{aligned}
\end{align}
Since $\{x_i\}_{i \in \mathcal{I}}$ is $(\eta / 16)$-dense, we have that $X = \cup_{i \in \mathcal{I}} B_i'$.  Therefore, by \eqref{eqn_consistency_of_fis_to_get_f}, we can define $f: X \to F$ by defining, for each $i \in \mathcal{I}$, the function $f$ to be equal to $f_i$ on $B_i'$.  Since each $f_i$ is continuous, the function $f$ is continuous.

Summarizing the previous paragraph, there exists a continuous function $f: X \to F$ (that depends on $n$) such that for all $i \in \mathcal{I}$ and all $x \in B_i'$,
\begin{align}
\label{eqn_def_of_i}
    T^n_h \big(x, G_i(x) \big)=  \big(T^n x, G_i(x)+h_n(x) \big) = \big(T^n x, G_i(T^nx) + f(x)\big).
\end{align}

Since $f$ is uniformly continuous and $F$ is discrete, there exists $\delta > 0$ such that for all $x, x' \in X$ with $d_X(x,x') < \delta$, $f(x) = f(x')$.

To show that $(Z, T_h)$ is equicontinuous, it suffices to show that the system $(Z, T_h^n)$ is equicontinuous.  Let $\eps > 0$.  We will show that there exists $\zeta > 0$ such that for all $(x,t), (x',t') \in Z$ with $d_{X \times \T}((x,t),(x',t')) < \zeta$, for all $m \in \N$,
\begin{align}
\label{eqn_equicont_to_show}
    d_{X \times \T}(T^{nm}_h(x,t),T^{nm}_h(x',t')) < \eps.
\end{align}
This will show, by definition, that the system $(Z, T_h^n)$ is equicontinuous.

Define $\sigma=\min \big\{\eta/4,\delta/2,\eps/2, 1/(4\ell) \big\}$.

\begin{claim}[Equicontinuity constant]
\label{claim_better_origin_of_equicontinuity_const}
There exists $0 < \zeta < \sigma / 2$ such that for all $x, x' \in X$ with $d_X(x,x') < \zeta$ and for all $m \geq 0$, there exists $i \in \mathcal{I}$ such that $T^{nm}x, T^{nm} x' \in B_i'$ and
\[d_X \big( T^{n(m+1)}x, T^{n(m+1)} x' \big) + \big\| G_i(T^{n(m+1)}x) - G_i(T^{n(m+1)} x') \big\| < \sigma.\]
\end{claim}

\begin{proof}
\renewcommand{\qedsymbol}{\rotatebox{45}{\#}}
Since $\mathcal{I}$ is finite and each $G_i$ is uniformly continuous on the closure $\overline{B_i'}\subseteq B_i,$ there exists $\theta>0$ such that for all $i\in \mathcal{I}$ and all $x,x'\in \overline{B_i'}$ with $d_X(x,x')<\theta,$
\begin{align}\label{E:G}
\big\|G_i(x)-G_i(x') \big\|<\sigma/2.
\end{align}
By the equicontinuity of the family $\{T^k\}_{k \in \N},$ there exists $0<\zeta<\sigma/2$ such that for all $k\in \N$ and all $x,x'$ with $d_X(x,x')<\zeta,$ we have 
\begin{align}\label{E:Equi}
d_X(T^k x, T^k x')<\min\{\theta,\sigma/2\}.
\end{align}
Let $x,x'\in X$ with $d_X(x,x')<\zeta$ and $m\geq 0.$ As $\{x_i\}_{i\in \mathcal{I}}$ is $(\eta/16)$-dense, there exists $i\in \mathcal{I}$ such that $T^{nm}x\in B_{\eta/16}(x_i)\subseteq B_i'.$ We see then that
\[d_X(T^{nm}x',x_i)\leq d_X(T^{nm}x',T^{nm}x)+d_X(T^{nm}x,x_i)<\sigma/2+\eta/16< \eta/4,\]
whereby $T^{nm}x,$ $T^{nm}x'\in B_i'.$

For the second conclusion in the claim, using the fact that $\|T^n-\id\|_\infty<\eta/16,$ we have
\[d_X(T^{n(m+1)}x,x_i)\leq d_X(T^{n(m+1)}x,T^{nm}x)+d_X(T^{nm}x,x_i)<\eta/16+\eta/16<\eta/4,\] and 
\begin{align*}
d_X(T^{n(m+1)}x',x_i) & \leq d_X(T^{n(m+1)}x',T^{nm}x')+d_X(T^{nm}x',x_i) \\
& < \eta/16+\sigma/2+\eta/16\leq \eta/16+\eta/8+\eta/16=\eta/4,
\end{align*}
so $T^{n(m+1)}x,$ $T^{n(m+1)}x'\in B_i'.$ 

Since $d_X(x,x')<\zeta,$ the inequality in \eqref{E:Equi} implies that
\[d_X \big( T^{n(m+1)}x, T^{n(m+1)} x' \big) < \min\{\theta,\sigma/2\}.\] Since $T^{n(m+1)}x,$ $T^{n(m+1)}x'\in B_i',$ using \eqref{E:G}, we have that
\[d_X \big( T^{n(m+1)}x, T^{n(m+1)} x' \big) + \big\| G_i(T^{n(m+1)}x) - G_i(T^{n(m+1)} x') \big\| < \sigma/2+\sigma/2=\sigma,\]
as was to be shown.
\end{proof}

Let $\zeta$ be the equicontinuity constant from \cref{claim_better_origin_of_equicontinuity_const}.  Let $(x,t), (x',t') \in Z$ be such that $d_{X \times \T}\big((x,t),(x',t') \big) < \zeta$.  We will show that for all $m \geq 0$,
\begin{align}
\label{eqn_induct_hypoth}
    d_{X \times \T}\big(T^{nm}_h(x,t),T^{nm}_h(x',t')\big) < \sigma
\end{align}
implies that
\begin{align}
\label{eqn_induct_conclusion}
d_{X \times \T}\big(T^{n(m+1)}_h(x,t),T^{n(m+1)}_h(x',t')\big) < \sigma.
\end{align}
Note that \eqref{eqn_induct_hypoth} holds when $m=0$ since $\zeta < \sigma$. Thus, the inequality in \eqref{eqn_equicont_to_show} follows from a simple induction on $m$ and the fact that $\sigma < \eps$.

Suppose that $m \geq 0$ and that \eqref{eqn_induct_hypoth} holds. Since $d_{X \times \T}\big((x,t),(x',t')\big) < \zeta$, it follows by \cref{claim_better_origin_of_equicontinuity_const} that there exists $i \in \mathcal{I}$ such that $T^{nm}x$, $T^{nm} x' \in B_i'$.
Since $T^{nm}_h(x,t), T^{nm}_h(x',t') \in Z$, there exist $c, c' \in F$ such that $t + h_{nm}(x) = G_i(T^{nm}x) + c$ and $t' + h_{nm}(x') = G_i(T^{nm}x') + c'$.  It follows from \cref{claim_local_lifts} (note that $T^{nm}x, T^{nm} x' \in B_i'$) and \eqref{eqn_induct_hypoth} (note that $\sigma < 1/(4\ell)$) that
\begin{align*}
    \big\|c - c' \big\| \leq \big\|G_i(&T^{nm}x) - G_i(T^{nm}x') \big\| \ + \\
    &\big\| \big(t + h_{nm}(x) \big) - \big(t' + h_{nm}(x') \big) \big\| < \frac{1}{2\ell},
\end{align*}
whereby $c = c'$.

Now we compute, using \eqref{eqn_def_of_i},
\begin{align*}
    T^{n(m+1)}_h(x,t) &= T^{n}_h T^{nm}_h(x,t) \\
    &= T^{n}_h \big(T^{nm}x, t + h_{nm}(x) \big) \\
    &= T^{n}_h \big(T^{nm}x, G_i(T^{nm}x) + c \big) \\
    &= \big(T^{n(m+1)} x, c + G_i(T^{n(m+1)}x) + f(T^{nm}x) \big).
\end{align*}
The same equalities hold with $x$ and $t$ replaced by $x'$ and $t'$, respectively.  Since $d_X \big(T^{nm}x, T^{nm}x' \big) < \sigma < \delta$, $f(T^{nm}x) = f(T^{nm}x')$. Therefore, by \cref{claim_better_origin_of_equicontinuity_const},
\begin{align*}
    d_{X \times \T} \big( &T^{n(m+1)}_h(x,t), T^{n(m+1)}_h(x',t') \big) = \\
    &d_X \big(T^{n(m+1)}x, T^{n(m+1)}x'\big) + \| G_i(T^{n(m+1)}x) - G_i(T^{n(m+1)}x')\| < \sigma,
\end{align*}
verifying \eqref{eqn_induct_conclusion} and finishing the proof of the theorem.
\end{proof}

The \emph{Kronecker factor} of a system is its largest equicontinuous factor.  In the next theorem, we prove a general result concerning minimality and the Kronecker factor of skew products on the 2-torus.  We will need this result to verify property \eqref{item_minimal_and_correct_kronecker} in \cref{maintheorem_hidden_frequencies}.  The supremum norm on $C(\T,\R)$ is denoted by $\| \cdot \|_\infty$.

\begin{theorem}
\label{theorem_condition_for_beta_skew_to_be_minimal}
Let $(\T, T)$ be an irrational rotation. Let $H: \T \to \R$ be continuous, and let $\beta = \int_\T H(x) \ dx$. If the sequence $m \mapsto \|H_m - m\beta \|_{\infty}$, where $H_m = \sum_{i=0}^{m-1} H \circ T^i$, is unbounded, then the skew product system $(\T^2,T_H)$ is minimal and has Kronecker factor $(\T, T)$.
\end{theorem}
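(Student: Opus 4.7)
The plan reduces both conclusions to the contrapositive of a single technical statement, the \emph{Key Lemma}: if $\phi(x,t) = e^{2\pi i(nt + b(x))}$ is a continuous eigenfunction of $T_H$ for some $n \in \Z \setminus \{0\}$, some $\lambda \in \R$, and some continuous $b: \T \to \T$, then $\sup_m \|H_m - m\beta\|_\infty < \infty$. To prove the Key Lemma, note that $\phi \circ T_H = e^{2\pi i\lambda}\phi$ is equivalent to $nH(x) \equiv \lambda + b(x) - b(x+\alpha) \pmod{1}$. Fix a continuous lift $\hat b: \R \to \R$ of $b$ satisfying $\hat b(x+1) = \hat b(x) + w$, where $w \in \Z$ is the winding number of $b$, and decompose $\hat b = \tilde b + w \cdot \mathrm{id}$ with $\tilde b$ being $1$-periodic. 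Lifting the congruence yields a constant $\mu \in \R$ with $nH(x) = \mu + \hat b(x) - \hat b(x+\alpha)$, and integrating over $[0,1]$ pins down $\mu = n\beta + w\alpha$. Telescoping then gives $n(H_m(x) - m\beta) = \tilde b(x) - \tilde b(x + m\alpha)$, bounded by $2\|\tilde b\|_\infty$ uniformly in $m$ and $x$.

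For \emph{minimality}, suppose for contradiction that $(\T^2, T_H)$ is not minimal. Then \cref{theorem_not_minimal_implies_equicontinuous} gives equicontinuity, and the structure extracted in the proof of that theorem provides a proper closed subgroup $F = \ell^{-1}\Z/\Z$ of $\T$ and a continuous $g: \T \to \T/F$ with $\pi_F \circ h = g \circ T - g$ in $\T/F$, where $h = H \bmod 1$. Composing with the isomorphism $\psi: \T/F \to \T$ given by $\psi([t]) = \ell t$ and setting $\tilde g = \psi \circ g$ yields $\ell h(x) = \tilde g(x+\alpha) - \tilde g(x)$ in $\T$. A direct check then confirms that $\phi(x,t) = e^{2\pi i(\ell t - \tilde g(x))}$ is a continuous $T_H$-invariant function, hence an eigenfunction of $T_H$ of the form required by the Key Lemma with $n = \ell \neq 0$. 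The Key Lemma forces $\sup_m \|H_m - m\beta\|_\infty < \infty$, contradicting the hypothesis.

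For the \emph{Kronecker factor}, the projection $\pi_1:(x,t) \mapsto x$ realizes $(\T, T)$ as an equicontinuous factor, so the Kronecker factor of $(\T^2, T_H)$ contains $(\T, T)$. Suppose it properly contains $(\T, T)$. Since $(\T^2, T_H)$ is now known to be minimal, the standard correspondence between the Kronecker factor of a minimal system and its continuous eigenfunctions provides a continuous eigenfunction $\phi$ of $T_H$ that does not factor through $\pi_1$; equivalently, its Fourier expansion $\phi(x,t) = \sum_n a_n(x) e^{2\pi i nt}$ has $a_n \not\equiv 0$ for some $n \in \Z \setminus \{0\}$. Equating Fourier coefficients in $\phi \circ T_H = e^{2\pi i\lambda}\phi$ gives $a_n(x+\alpha) e^{2\pi i nH(x)} = e^{2\pi i\lambda}a_n(x)$, and minimality of the irrational rotation $T$ forces $|a_n|$ to be a positive constant. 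Writing $a_n = Ce^{2\pi i b}$ with continuous $b:\T \to \T$, the function $\psi(x,t) = e^{2\pi i(nt + b(x))}$ is itself a continuous eigenfunction to which the Key Lemma applies, producing a contradiction. The main obstacle is the winding-number bookkeeping in the Key Lemma: since $b$ need not admit a continuous real-valued lift, one must carefully verify that the potentially growing term $mw\alpha$ cancels against the non-periodic part of $\hat b(x) - \hat b(x + m\alpha)$ when telescoping.
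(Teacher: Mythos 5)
Your proof is correct, but it takes a genuinely different route from the paper's. The centerpiece of your argument is the Key Lemma asserting that a continuous eigenfunction of $T_H$ with nonzero fibre frequency $n$ forces $\sup_m \|H_m - m\beta\|_\infty < \infty$; this lemma, and the winding-number bookkeeping that makes it work (the $mw\alpha$ cancellation you flag at the end), has no counterpart in the paper, and it cleanly unifies both conclusions. The paper handles the two conclusions by separate direct estimates. For minimality, the paper uses \cref{theorem_not_minimal_implies_equicontinuous} purely as a black box: equicontinuity plus \cref{lemma_equicontinuous_return_times} gives a syndetic set of $m$ with $\sup_x\|H_m(x)\|<1/2$; continuity of $H_m$ forces the nearest-integer function to be a single constant $z_m$; since $H_m$ has mean $m\beta$ this gives $\|H_m-m\beta\|_\infty<1$, and the cocycle identity $H_{m_1+m_2}=H_{m_1}+H_{m_2}\circ T^{m_1}$ upgrades boundedness along a syndetic sequence to boundedness everywhere. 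Your route to the same contradiction instead re-derives the internal structure from the proof of \cref{theorem_not_minimal_implies_equicontinuous} (the quotient subgroup $F$, the selector $g$ with $\pi_K\circ h = g\circ T - g$) to manufacture an explicit invariant function $e^{2\pi i(\ell t - \tilde g(x))}$ and feed it to the Key Lemma; this works but is not a black-box use of that theorem. For the Kronecker factor, the paper works with the regional proximal relation (citing Veech) and produces proximal witnesses geometrically: using unboundedness of $H_m - m\beta$ along a syndetic set of good $m$, it applies the intermediate value theorem to find a nearby $z$ with $H_m(z)=0$. You instead invoke the eigenfunction characterization of the maximal equicontinuous factor of a minimal system, extract a nonvanishing Fourier level $a_n$, use minimality of the base to make $|a_n|$ constant, and again feed the resulting eigenfunction to the Key Lemma. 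Both characterizations of the Kronecker factor are standard; your argument is more algebraic and conceptually uniform, the paper's is more hands-on and quantitative.
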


\begin{proof}
Let $\alpha \in \T \setminus \Q$ such that $T(x) = x+ \alpha$.  First, we will show that the system $(\T^2,T_H)$ is minimal.  Suppose for a contradiction that it is not.  By \cref{theorem_not_minimal_implies_equicontinuous}, the system $(\T^2,T_H)$ is equicontinuous, hence \cref{lemma_equicontinuous_return_times} gives that the set
\[A \defeq \big\{ m \in \N \ \big| \ \sup_{(x,t) \in \T^2} d_{\T^2} \big( (x,t), T_H^m (x,t) \big) < 1/2 \big\},\]
is a \bz{} set, and hence is syndetic.  Fix $m \in A$.  For all $x \in X$, the fact that $m \in A$ implies that $\|H_m(x)\| < 1/2$. Thus, there exists a function $z_m: \T \to \Z$ such that for all $x \in \T$, $| H_m(x) - z_m(x) | < 1/2$.  Since $H_m: \T \to \R$ is continuous, the function $z_m$ must be constant: for all $x \in \T$, $z_m(x) = z_m$.  Since $H_m$ has mean $m \beta$, we get that $|m \beta - z_m| < 1/2$, which implies that $\| H_m - m \beta \|_\infty < 1$.  Using the fact that $H_{m_1 + m_2} = H_{m_1} + H_{m_2} \circ T^{m_1}$, it is easy to show that since the sequence $m \mapsto \|H_m - m\beta \|_{\infty}$ is bounded along a syndetic subsequence, it is bounded.  This is in contradiction to our assumption, concluding the proof that the system $(\T^2, T_H)$ is minimal.

Now we will show that the system $(\T^2, T_H)$ has Kronecker factor $(\T, T: x \mapsto x+ \alpha)$.  Since $(\T^2, T_H)$ is minimal and distal, its Kronecker factor is determined by the regional proximal relation \cite[Theorem 1.1]{veech1968}: $(x,t)$ is regionally proximal to $(y,s)$ if and only if for all $\eps > 0$, there exists $(z, u)$ with $\|(z,u) - (x,t) \| < \eps$ and $m \in \N$ such that $\| T_H^m (z,u) - (x,t) \| < \eps$ and $\| T_H^m (x,t) - (y,s) \| < \eps$.  Because the factor $(\T,T)$ is equicontinuous, if $(x,t)$ and $(y,s)$ are regionally proximal, then $x=y$.

Because $T_H$ commutes with rotation in the second coordinate of $\T^2$, to prove that the system $(\T^2, T_H)$ has Kronecker factor $(\T, T)$, it suffices to prove that for all $x, t \in \T$, the points $(x,0)$ and $(x,t)$ are regionally proximal.  Let $x, t \in \T$, and let $\eps > 0$.  Let $0 < \delta < \eps$ be sufficiently small so that if $\|x - y\| < \delta$, then $|H(x) - H(y)| < \eps$.  Because $(\T^2, T_H)$ is minimal, the set of return times of $(x,0)$ to the $\delta$-neighborhood of the point $(x,t)$ is syndetic; therefore, the set
\[B \defeq \big\{m \in \N \ \big| \ \|m \alpha\| < \delta, \ \|H_m (x) - t \| < \eps \big\}\]
is syndetic.  Since $B$ is syndetic, it follows from our assumptions that the sequence $m \mapsto \|H_m - m\beta \|_{\infty}$ is unbounded along $B$.

Let  $N \in \N$ be such that $\{ n \alpha \ | \ 1 \leq n \leq N \}$ is $\eps$-dense in $\T$, and choose $m \in B$ such that $\|H_m - m\beta \|_{\infty} > 2N$.  We will show that there exists $(z,0) \in \T^2$ such that $\| (z,0) - (x,0) \| < \eps$ and $\| T_H^m (z,0) - (x,0)\| < \eps$.  Since $m \in B$, we will have that $\| T_H^m (x,0) - (x,t)\| < \eps$, and since $\eps > 0$ was arbitrary, this will finish the proof that $(x,0)$ and $(x,t)$ are regionally proximal.

Since $\|H_m - m\beta \|_{\infty} > 2N$ and the mean of $H_m$ is $m \beta$, there exist $x_\star, x^{\star} \in \T$ such that $H_m(x_\star) - m \beta < -2N$ and $H_m(x^\star) - m \beta > 2N$.  By our choice of $\delta$, for all $x \in \T$,
\begin{align}
\label{eqn_jump_in_hm_is_not_large}
    |H_m (x + \alpha) - H_m(x)| = |H(x + m \alpha) - H(x)| < \eps.
\end{align}
By our choice of $N$, there exist $n_1, n_2 \in \{1, \ldots, N\}$ such that $x_\star + n_1 \alpha, x^\star + n_2 \alpha$ are both within $\eps$ of $x$. By repeatedly appealing to \eqref{eqn_jump_in_hm_is_not_large}, we have that $|H_m(x_\star + n_1 \alpha) - H_m(x_\star) | < N \eps$, from which it follows that $H_m(x_\star + n_1 \alpha) - m \beta < -N$.  Similarly, $H_m(x^\star + n_2 \alpha) - m \beta > N$.  Since $H_m$ is continuous, by the intermediate value theorem, the image of $H_m$ restricted to an $\eps$-ball about $x$ is all of $\T$.  Therefore, there exists $z \in \T$, $\|z - x\| < \eps$, such that $H_m(z) = 0$.  It follows that the point $(z,0) \in \T^2$ satisfies $\| (z,0) - (x,0) \| < \eps$ and $\| T_H^m (z,0) - (x,0)\| < \eps$, as was to be shown.
\end{proof}

\begin{remark}
\label{rmk_gottschalk_hedlund_connection}
\cref{theorem_condition_for_beta_skew_to_be_minimal} is a complement to the classic theorem of Gottschalk and Hedlund \cite[Theorem 4.11]{Gottschalk_Hedlund_book}, which asserts that the sequence $m \mapsto \|H_m - m\beta\|_{\infty}$ is bounded if and only if there exists a continuous function $G: \T \to \R$ such that $H(x) = G(x+\alpha) - G(x) + \beta$.  In this case, the $\T^2$-homeomorphism $(x,y) \mapsto (x, y + G(x))$ demonstrates the topological conjugacy between the skew product system $(\T^2, T_H)$ and the rotation $\big(\T^2, (x,y) \mapsto (x+ \alpha, y+\beta)\big)$.  Therefore, if the sequence $m \mapsto \|H_m - m\beta\|_{\infty}$ is bounded, the skew product system $(\T^2,T_H)$ is equicontinuous, and it is minimal if and only if $1$, $\alpha$, and $\beta$ are linearly independent over the rationals.
\end{remark}

\subsection{The hidden frequencies example: a proof of \cref{maintheorem_hidden_frequencies}}
\label{sec_hidden_frequencies}

In this section, we prove \cref{maintheorem_hidden_frequencies} by giving an example of a minimal skew product system on $\T^2$ in which recurrence in the Kronecker factor (the system's largest equicontinuous factor, the base rotation) does not suffice for recurrence in the system.  Such an example stands in sharp contrast to other systems in which the answer to Katznelson's Question is known, and it demonstrates at least some of the difficulty of answering the question for more general systems. We use the notation of skew product systems from \cref{def_skew_prod_systems}.

Define $\tilde{H}: [0,1] \to \R$ by
\[\tilde{H}(x) = x^4 - 2x^3 + x^2 - 1/30.\]
Since $\tilde{H}(0) = \tilde{H}(1)$, there exists $H \in C(\T, \R)$ so that $\tilde H = H \circ \restr{\pi}{[0,1]}$, where $\pi: \R \to \T$ is the quotient map. The system in the proof of \cref{maintheorem_hidden_frequencies} will be a skew product system of the form $(\T^2,T_{H + \beta})$,
\[T_{H+\beta} (x,y) = (x + \alpha, y + H(x) + \beta),\]
for certain $\alpha, \beta \in \T$.  Recall from \cref{def_skew_prod_systems} that in the system $(\T^2,T_{H + \beta})$, the map $H$ is implicitly pre-composed with the quotient map $\pi$.

\begin{lemma}\leavevmode
\label{lemma_basic_facts_about_H}
For all $x \in \T$, for all $n \geq 4$,
\begin{align}
\label{eqn_riemann_sum_estimate}
    \left| \sum_{i=0}^{n-1} H \left(x + \frac in \right)\right| < \frac 1{12}.
\end{align}
\end{lemma}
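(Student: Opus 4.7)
The plan is to recognize $\tilde H(x)=x^4-2x^3+x^2-\tfrac{1}{30}$ as the fourth Bernoulli polynomial $B_4(x)$, whose periodic extension has a classical, rapidly convergent Fourier series on $\T$. First, a direct computation confirms that $\int_0^1 \tilde H(x)\,dx = \tfrac 15-\tfrac 12+\tfrac 13-\tfrac 1{30}=0$, so the zeroth Fourier coefficient of $H$ vanishes. A brief check of boundary values also shows that $H$ is $C^2$ on $\T$ (only the third derivative jumps at $0$), which is what makes the Fourier series converge absolutely with the decay rate needed below.

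I would then invoke the classical uniformly convergent expansion
\[
H(x) = -\frac{3}{\pi^4}\sum_{k=1}^\infty \frac{\cos(2\pi k x)}{k^4},
\]
which, if one prefers to avoid citing Bernoulli polynomials, can be derived by repeatedly integrating by parts against $e^{-2\pi i k x}$ and explicitly computing the boundary contributions. Substituting this expansion into the sum in \eqref{eqn_riemann_sum_estimate} and applying the elementary orthogonality identity $\sum_{i=0}^{n-1} e^{2\pi i j i/n} = n$ if $n\mid j$ and $0$ otherwise, one finds that every Fourier mode whose frequency is not a multiple of $n$ cancels, yielding
\[
\sum_{i=0}^{n-1} H\!\left(x+\frac{i}{n}\right) = -\frac{3}{n^3\pi^4}\sum_{k=1}^\infty \frac{\cos(2\pi n k x)}{k^4}.
\]

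The triangle inequality, together with $\sum_{k\geq 1} k^{-4}=\pi^4/90$, would then give the uniform bound $1/(30 n^3)$, which is at most $1/1920 < 1/12$ whenever $n\geq 4$. There is no real obstacle here: the only substantive ingredient is identifying $\tilde H$ as $B_4$ (or equivalently, computing the Fourier coefficients of $H$ directly). The $k^{-4}$ decay of the coefficients, which reflects the $C^2$ smoothness of $H$ on $\T$, is much more than what the bound requires, and the geometric-sum identity supplies the crucial $n^{-3}$ gain that converts the Riemann-type sum into something much smaller than a naive estimate would give.
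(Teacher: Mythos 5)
Your proof is correct and takes a genuinely different route from the paper's. The paper proceeds by a direct algebraic computation: it first observes that $x \mapsto \sum_{i=0}^{n-1} H(x+i/n)$ is $(1/n)$-periodic, reducing the claim to real $x \in [0,1/n]$, and then expands $\tilde{H}(x+i/n)$ and sums over $i$ using the Faulhaber formulae for $\sum_{i=0}^{n-1} i^k$, $k=1,\ldots,4$, arriving at the closed form
\[
\sum_{i=0}^{n-1}\tilde{H}\!\left(x+\frac{i}{n}\right)= nx^4-2x^3+\frac{1}{n}x^2-\frac{1}{30n^3},
\]
which it bounds crudely by $\frac{121}{30n^3}$ on $[0,1/n]$. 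Your approach instead recognizes $\tilde H = B_4$ and uses the Fourier expansion of the periodic Bernoulli function, after which the geometric-sum identity kills all frequencies not divisible by $n$ and the $n^{-3}$ gain drops out transparently: each surviving coefficient picks up a factor $n/n^4=n^{-3}$. Your bound $\tfrac{1}{30n^3}$ is in fact sharp (attained at $x=0$) and noticeably better than the paper's $\tfrac{121}{30n^3}$, although either suffices for $n \geq 4$. The trade-off is the one you would expect: the paper's computation is longer but fully elementary and self-contained, while yours is shorter and more conceptual but imports the classical identity $B_4(\{x\})=-\tfrac{3}{\pi^4}\sum_{k\geq1}\cos(2\pi kx)/k^4$ (or the work of deriving it), along with $\zeta(4)=\pi^4/90$. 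Both exploit the same underlying $(1/n)$-periodicity, made explicit in the paper and encoded in the surviving Fourier modes in yours. One small blemish: you use $i$ simultaneously as the imaginary unit and the summation index in $\sum_{i=0}^{n-1}e^{2\pi i j i/n}$; pick a different index letter in a final write-up.
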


\begin{proof}
Let $n \geq 4$. Since the map $x \mapsto \sum_{i=0}^{n-1} H(x + i /n )$ is $(1/n)$-periodic on $\T$, it suffices to verify \eqref{eqn_riemann_sum_estimate} with $H$ replaced by $\tilde{H}$ for all real values of $x \in [0,1/n]$. By the Faulhaber formulae for $\sum_{i=0}^{n-1}i^k$ for $k=1,2,3,4$, we have that 
\begin{eqnarray*}
\tilde{H} \left(x + \frac in \right) & = & x^4+\left(-2+\frac{4}{n}i\right)x^3+\left(1-\frac{6}{n}i+\frac{6}{n^2}i^2\right)x^2 \\
& + & \left(\frac{2}{n}i-\frac{6}{n^2}i^2+\frac{4}{n^3}i^3\right)x+\left(-\frac{1}{30}+\frac{1}{n^2}i^2-\frac{2}{n^3}i^3+\frac{1}{n^4}i^4\right).
\end{eqnarray*}
Skipping the algebra, the sum of interest is equal to
\[\sum_{i=0}^{n-1} \tilde{H} \left(x + \frac in \right) =  nx^4-2x^3+\frac{1}{n}x^2-\frac{1}{30n^3}.\]
The claim follows since, for every $x\in [0,1/n],$ 
\[\left| \sum_{i=0}^{n-1} \tilde{H} \left(x + \frac in \right)\right|\leq nx^4+2x^3+\frac{1}{n}x^2+\frac{1}{30n^3}\leq \frac{121}{30n^3}<\frac{1}{12}.\]
\end{proof}

The following theorem gives a sufficient condition for a skew-product system of the form $(x,y) \mapsto \big(x + \alpha, y + H(x) \big)$ on $\T^2$ to be minimal. It is quick to check that the function $\tilde H$ defined above satisifies the hypotheses of the theorem; in fact, this is precisely how $\tilde H$ was chosen.

\begin{theorem}[{\cite[Theorem 1.4]{Hellekalek_Larcher_weyl_sums_and_skew_products_1989}, combined with the remark following it}]
\label{theorem_Hellekalek_Larcher_sufficient_condition_for_minimality}
Suppose that $\tilde{H}: [0,1] \to \R$, $K \geq 2$, and $\alpha \in [0,1]$ satisfy
\begin{itemize}
    \item $\tilde{H}$ is $K$-times continuously differentiable on $[0,1]$;
    \item $\int_0^1 \tilde{H}(x) \ dx = 0$;
    \item for all $0 \leq j \leq K-2$, $\tilde{H}^{(j)}(0) = \tilde{H}^{(j)}(1)$;
    \item $\tilde{H}^{(K-1)}(0) \neq \tilde{H}^{(K-1)}(1)$; and 
    \item for all $i \in \N$, $a_{i+1} \geq q_i^{2K}$, where $a_i$ is the $i^{\text{th}}$ partial quotient of $\alpha$ (as in \eqref{eqn_alpha_as_continued_fraction} below) and $q_i$ is the denominator of the $i^{\text{th}}$ convergent of $\alpha$.
\end{itemize}
Defining $H \in C(\T, \R)$ such that $\tilde H = H \circ \restr{\pi}{[0,1]}$, the skew-product system $(x,y) \mapsto \big(x + \alpha, y + H(x) \big)$ on $\T^2$ is minimal.
\end{theorem}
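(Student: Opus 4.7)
The plan is to combine Fourier analysis with the Diophantine properties of $\alpha$ encoded in the hypothesis on the partial quotients. I would start from the classical criterion for minimality of circle-valued skew extensions of an irrational rotation: the system $(x,y)\mapsto(x+\alpha,\,y+H(x))$ on $\T^2$ fails to be minimal if and only if there exist a nonzero integer $n$, a constant $c\in\T$, and a continuous function $g\colon\T\to\T$ with $n H(x) \equiv g(x+\alpha) - g(x) + c \pmod{1}$. Assuming such a cohomological decomposition exists, the goal is to pass to Fourier coefficients and derive a contradiction along the subsequence $k = q_i$ of continued-fraction convergent denominators.

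Two quantitative ingredients drive the estimate. First, the smoothness hypotheses on $\tilde H$---namely $C^K$ on $[0,1]$ with derivatives up to order $K-2$ matching at the endpoints but a nonzero jump in the $(K-1)$-st derivative---force $H$ to descend to a $C^{K-2}$ function on $\T$ whose $(K-1)$-st derivative has a jump discontinuity. A standard integration-by-parts argument then yields $|\hat H(k)|\asymp |k|^{-K}$ along an infinite subsequence of $k$, with leading constant controlled by the jump size $\tilde H^{(K-1)}(1)-\tilde H^{(K-1)}(0)\neq 0$. Second, the classical continued-fraction inequalities give $\|q_i\alpha\|\asymp 1/(a_{i+1}q_i)$, hence $|e^{2\pi i q_i\alpha}-1|\asymp 1/(a_{i+1}q_i)$. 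Evaluating the Fourier relation $n\hat H(k)=(e^{2\pi i k\alpha}-1)\hat g(k)$ at $k=q_i$ produces
\[
|\hat g(q_i)|\gtrsim \frac{|n|\,a_{i+1}}{q_i^{K-1}},
\]
and the hypothesis $a_{i+1}\geq q_i^{2K}$ then forces $|\hat g(q_i)|\gtrsim |n|\,q_i^{K+1}\to\infty$, which contradicts $g\in L^2(\T)$.

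The main obstacle is making the reduction to the additive cohomology equation fully rigorous when $g$ is merely circle-valued: if $g$ has nonzero winding number it does not lift to $\R$, so the scalar Fourier identity above is not quite available. In that case one instead works with $\phi(x)=e^{2\pi i g(x)}$ and the multiplicative functional equation $\phi(x+\alpha)=\lambda\,e^{2\pi i nH(x)}\,\phi(x)$, extracting Fourier information from the resulting convolution identity and bounding $\phi$ in $L^2$ rather than pointwise. The growth condition $a_{i+1}\geq q_i^{2K}$ is calibrated precisely so that the $|k|^{-K}$ decay of $\hat H$ is outpaced by the small-denominator singularities $|e^{2\pi i q_i\alpha}-1|^{-1}$ at convergents, with enough slack to survive the convolution as well as the scalar version. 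Verifying this slack, and ruling out the possibility that cancellation in the convolution salvages the equation, is the technical heart of the argument.
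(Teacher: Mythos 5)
The paper does not prove this theorem; it cites it verbatim from Hellekalek and Larcher's 1989 paper (together with the remark following their Theorem~1.4), and the result is used as a black box in the construction of \cref{maintheorem_hidden_frequencies}. So there is no ``paper's own proof'' to compare against — only the question of whether your blind reconstruction is sound.

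Your outline is correct and follows the standard route: reduce minimality to a cohomological criterion, extract a Fourier identity from the coboundary equation, lower-bound $|\hat H(q_i)|$ by $q_i^{-K}$ via integration by parts (using the endpoint-matching of $\tilde H^{(j)}$ for $j\le K-2$ and the jump in $\tilde H^{(K-1)}$), upper-bound $|e^{2\pi i q_i\alpha}-1|$ by $\asymp 1/(a_{i+1}q_i)$, and let the hypothesis $a_{i+1}\ge q_i^{2K}$ force $|\hat g(q_i)|\to\infty$. Two small remarks. First, the integration-by-parts argument actually gives $\hat H(k)=\tfrac{\tilde H^{(K-1)}(0)-\tilde H^{(K-1)}(1)}{(2\pi i k)^K}+o(|k|^{-K})$ for \emph{all} large $|k|$, not merely along a subsequence, since the $o(|k|^{-K})$ remainder comes from Riemann--Lebesgue applied to $\tilde H^{(K)}$. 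Second, the ``main obstacle'' you flag at the end — a circle-valued $g$ with nonzero winding number — is not a genuine obstruction and needs none of the convolution machinery you sketch. Write $g(x)=mx+g_0(x)$ with $m\in\Z$ the winding number and $g_0$ of winding number zero; the coboundary equation becomes $nH(x)\equiv g_0(x+\alpha)-g_0(x)+(c+m\alpha)\pmod 1$, and since $g_0$ lifts to a continuous $G_0:\T\to\R$, continuity of both sides forces the lifted identity $nH(x)=G_0(x+\alpha)-G_0(x)+c''$ in $\R$ with $c''$ a real constant; integrating and using $\int_\T H=0$ kills $c''$. You are then back in the scalar Fourier setting with $G_0$ continuous (hence $\hat G_0(k)\to 0$), and the contradiction you derived applies directly. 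The only remaining slack is that your bound $|\hat G_0(q_i)|\gtrsim |n|\,a_{i+1}/q_i^{K-1}$ already diverges once $a_{i+1}\gg q_i^{K-1}$, so the hypothesis $a_{i+1}\ge q_i^{2K}$ is stronger than strictly needed for this bare minimality statement; the extra exponent reflects what Hellekalek--Larcher actually prove (quantitative Weyl-sum bounds and well-distribution), of which minimality is a corollary.
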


The following lemma explains the choice of $\alpha$ in the system we construct; its second conclusion follows from \cref{rmk_gottschalk_hedlund_connection} and \cref{theorem_Hellekalek_Larcher_sufficient_condition_for_minimality}.

\begin{lemma}
\label{lemma_selection_of_alpha_from_schoissengeier}
There exists $\alpha \in \R \setminus \Q$ and a sequence $(n_i)_{i \in \N} \subseteq \N$ for which:
\begin{enumerate}
    \item \label{item_alpha_liminf_statement} the nearest integer to $n_i \alpha$ is coprime to $n_i$ and $\lim_{i \to \infty} n_i \|n_i \alpha\| = 0$; and
    \item \label{item_alpha_unboundedness} the sequence $m \mapsto \| H_m \|_\infty$ is unbounded.
\end{enumerate}
\end{lemma}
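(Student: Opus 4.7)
The plan is to build $\alpha$ by choosing the partial quotients of its continued fraction expansion inductively, making them grow fast enough to trigger \cref{theorem_Hellekalek_Larcher_sufficient_condition_for_minimality}, and then take the $n_i$ to be the denominators of the convergents. The first conclusion will fall out of standard continued-fraction estimates, and the second will follow by contradiction from the Gottschalk--Hedlund complement recorded in \cref{rmk_gottschalk_hedlund_connection} together with minimality of the skew product.

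\textbf{Applying Hellekalek--Larcher.} First, I would verify that $\tilde H(x)=x^4-2x^3+x^2-1/30$ satisfies the hypotheses of \cref{theorem_Hellekalek_Larcher_sufficient_condition_for_minimality} with $K=4$: it is smooth, $\int_0^1\tilde H\,dx=1/5-1/2+1/3-1/30=0$, a short direct computation gives $\tilde H^{(j)}(0)=\tilde H^{(j)}(1)$ for $j=0,1,2$, and $\tilde H^{(3)}(0)=-12\neq 12=\tilde H^{(3)}(1)$. Write $\alpha=[0;a_1,a_2,\ldots]$ with convergent denominators $q_i$, satisfying the recursion $q_{i+1}=a_{i+1}q_i+q_{i-1}$. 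I would now construct the $a_i$ inductively: set $a_1=1$, and, having chosen $a_1,\ldots,a_i$ (hence $q_i$), pick $a_{i+1}\geq q_i^{2K}=q_i^{8}$. The resulting $\alpha$ is irrational (the expansion is infinite), and by \cref{theorem_Hellekalek_Larcher_sufficient_condition_for_minimality} the skew-product system $(\T^2,T_H)$ is minimal.

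\textbf{Verifying (1).} Take $n_i=q_i$ and let $p_i$ be the numerator of the $i^{\text{th}}$ convergent. Standard continued-fraction theory gives $\gcd(p_i,q_i)=1$ and
\[
\|q_i\alpha\|=|q_i\alpha-p_i|\leq \frac{1}{q_{i+1}},
\]
so in particular $p_i$ is the nearest integer to $q_i\alpha$. Therefore
\[
n_i\|n_i\alpha\|\leq \frac{q_i}{q_{i+1}}\leq \frac{1}{a_{i+1}}\leq \frac{1}{q_i^{8}}\xrightarrow[i\to\infty]{}0,
\]
establishing condition (1).

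\textbf{Verifying (2).} Suppose for contradiction that $m\mapsto\|H_m\|_\infty$ is bounded. Because $\beta=\int_\T H(x)\,dx=0$, the Gottschalk--Hedlund statement in \cref{rmk_gottschalk_hedlund_connection} yields a continuous $G\colon\T\to\R$ with $H(x)=G(x+\alpha)-G(x)$. Then the homeomorphism $(x,y)\mapsto(x,y-G(x))$ conjugates $(\T^2,T_H)$ to the rotation $(x,y)\mapsto(x+\alpha,y)$, whose orbit closures are the horizontal circles $\T\times\{y_0\}$, and hence is not minimal. This contradicts minimality of $(\T^2,T_H)$ obtained in the previous step, completing the proof. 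The only mildly delicate point is checking that the inductive construction of $\alpha$ can simultaneously achieve both the growth condition $a_{i+1}\geq q_i^{2K}$ (needed for minimality) and the approximation condition (automatic along convergents); no genuine obstacle arises because the growth condition places no constraint on the $p_i$ and the coprimality $\gcd(p_i,q_i)=1$ is intrinsic to convergents.
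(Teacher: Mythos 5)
Your proof is correct and follows essentially the same route as the paper's: choose $\alpha$ by prescribing rapidly growing partial quotients so that $a_{i+1}\geq q_i^{8}$, take $n_i=q_i$, obtain condition (1) from standard continued-fraction estimates (coprimality of $p_i,q_i$ and $q_i\|q_i\alpha\|\leq 1/a_{i+1}$), and deduce condition (2) by combining \cref{theorem_Hellekalek_Larcher_sufficient_condition_for_minimality} (minimality) with the Gottschalk--Hedlund complement in \cref{rmk_gottschalk_hedlund_connection}. The only difference is that you spell out the verification of the Hellekalek--Larcher hypotheses for $\tilde H$ and the Gottschalk--Hedlund contradiction in full, whereas the paper simply cites the relevant statements.
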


\begin{proof}
Let $(a_i)_{i \in \N} \subseteq \N$ be a sufficiently rapidly increasing sequence such that, on defining $n_0 = 1$, $n_1 = a_1$, and $n_i = a_i n_{i-1} + n_{i-2}$, we have for all $i \in \N$ that $a_{i+1} \geq n_i^8$. (Take, for example, $a_i = 10^{10^i}$.) Let $\alpha \in \R$ be the real number whose sequence of simple continued fraction partial quotients is $(a_n)_{n \in \N}$:
\begin{align}
\label{eqn_alpha_as_continued_fraction}
    \alpha = \frac{1}{a_1 + \frac{1}{a_2 + \frac{1}{\ddots}}}.
\end{align}
We claim that this $\alpha$ and the sequence $(n_i)_{i \in \N}$ satisfy the conclusion of the lemma.

Denote by $p_i / q_i$ the $i^{\text{th}}$ continued fraction convergent of $\alpha$.  The following are standard facts in the theory of continued fractions \cite[Chapter 1]{khintchine_continued_fractions_book_1963}: $q_i = n_i$ (where $n_i$ is as defined in the previous paragraph); $n_i$ and $p_i$ are coprime; and 
\[n_i | n_i \alpha - p_i | = n_i \|n_i \alpha\| < \frac{n_i}{n_{i+1}} < \frac{1}{a_{i+1}},\]
in particular, the nearest integer to $n_i \alpha$ is $p_i$. This shows that the condition in \eqref{item_alpha_liminf_statement} is satisfied.

According to \cref{theorem_Hellekalek_Larcher_sufficient_condition_for_minimality} (with $K = 4$), the skew product system $(\T^2, T_H)$ is minimal.  It follows from \cref{rmk_gottschalk_hedlund_connection} (where $\beta = 0$) and the fact that the rotation $(x,y) \mapsto (x + \alpha, y)$ is not minimal that the sequence $m \mapsto \| H_m \|_\infty$ must be unbounded, as was to be shown.
\end{proof}

\begin{lemma}
\label{lemma_selection_of_beta_from_fatou}
For any sequence $(n_i)_{i \in \N} \subseteq \N$, there exists $\beta \in \R \setminus \Q$ such that for infinitely many $i \in \N$, $\| n_i \beta \| > 1/3$.
\end{lemma}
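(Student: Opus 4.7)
The plan is to run a measure-theoretic (Fatou-type) argument on the torus $\T=\R/\Z$. For each $i\in\N$, consider the open set
\[
U_i \defeq \big\{\beta \in \T \ \big| \ \|n_i \beta \| > 1/3 \big\}.
\]
The preimage under $\beta \mapsto n_i \beta$ of the arc $(1/3, 2/3)\subseteq\T$ consists of $n_i$ disjoint arcs in $\T$, each of length $1/(3n_i)$, so the Lebesgue measure of $U_i$ on $\T$ is exactly $1/3$, independently of $i$.

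Next, apply the reverse Fatou inequality for measures of limit superiors: writing $B_N = \bigcup_{i \geq N} U_i$, the sequence $(B_N)_{N\in\N}$ is decreasing and each $B_N \supseteq U_N$, so
\[
\mu \big(\limsup_{i \to \infty} U_i \big) \ = \ \lim_{N \to \infty} \mu(B_N) \ \geq \ \limsup_{i \to \infty} \mu(U_i) \ = \ 1/3.
\]
Hence the set of $\beta \in \T$ for which $\|n_i \beta \| > 1/3$ holds for infinitely many $i$ has Lebesgue measure at least $1/3$. Since the rationals in $\T$ form a set of measure zero, this set contains an irrational element, which we lift to any representative $\beta \in \R \setminus \Q$; this $\beta$ satisfies the conclusion.

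There is essentially no obstacle here beyond remembering the correct direction of Fatou's lemma for sets: $\mu(\limsup U_i) \geq \limsup \mu(U_i)$ requires only that we are on a finite measure space, which $\T$ is. The name of the lemma (\texttt{\_from\_fatou}) is the hint that this is the intended route rather than, say, a continued-fraction construction tailored to the specific sequence $(n_i)$.
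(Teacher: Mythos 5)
Your proof is correct and matches the paper's argument essentially step for step: both compute that each set $\{\beta : \|n_i\beta\|>1/3\}$ has Lebesgue measure exactly $1/3$ on $\T$, apply Fatou's lemma (in its reverse, set-theoretic form) to get that the limsup has measure at least $1/3$, and then pick an irrational representative. The only cosmetic difference is that the paper writes the condition as $\|n_i\beta\|\in(1/3,1/2]$, which is the same set since $\|\cdot\|\le 1/2$.
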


\begin{proof}
    Define
    $
    D_i\coloneqq\{\beta\in\T\ | \ \|n_i\beta\|\in (1/3,1/2]\}.
    $
    Let $\mu$ be the Lebesgue measure on $\T$. Since $\mu(D_i)=1/3$ for all $i\in\N$, it follows from Fatou's lemma that the set
    \[
    \{\beta\in\T\ | \ \beta\in D_i\text{ for infinitely many }i\in\N\}
    \]
    has measure at least $1/3$. Any irrational $\beta$ in this set satisfies the conclusion of the lemma.
\end{proof}

\begin{proof}[Proof of \cref{maintheorem_hidden_frequencies}]

Let $\alpha \in \R \setminus \Q$ and $(n_i)_i \subseteq \N$ be as guaranteed by \cref{lemma_selection_of_alpha_from_schoissengeier}.   Appealing to \cref{lemma_selection_of_beta_from_fatou}, let $\beta \in \R \setminus \Q$ and pass to a subsequence of $(n_i)_{i \in \N}$ so that for all $i \in \N$, $\|n_i \beta \| > 1/3$.  Let $L > 0$ be a Lipschitz constant for $H$, and let $0 < \delta < 1/(12L)$.  Passing to a further subsequence of $(n_i)_{i\in \N}$, we may assume that for all $i \in \N$, $n_i \|n_i \alpha\| < \delta$ and $\| n_i \beta \| > 1/3$.

It follows from \cref{lemma_selection_of_alpha_from_schoissengeier} and \cref{theorem_condition_for_beta_skew_to_be_minimal} that the system $(\T^2,T_{H+\beta})$ is minimal and has Kronecker factor $(\T, x \mapsto x + \alpha)$.  Put $R = \{ n_i \ | \ i \in \N \}$.  Since $\lim_{i \to \infty} \|n_i \alpha\| = 0$, the set $R$ is a set of recurrence for $(\T, x \mapsto x + \alpha)$. We have only left to verify that $R$ is not a set of recurrence for $(\T^2,T_{H + \beta})$.  It suffices to show that for all $x \in \T$ and all $m \in R$, $\|H_m(x) + m \beta\| > 1/6$.

Let $x \in \T$ and $m \in R$, and let $k$ be the nearest integer to $m \alpha$ so that $|\alpha - k/m| < \delta / m^2$.  Since $k$ and $m$ are coprime, there exists a permutation $\sigma$ of $\{0, \ldots, m-1\}$ such that for all $i \in \{0, \ldots, m-1\}$, $\| i \alpha - \sigma(i) / m\| < \delta / m$.  We estimate
\begin{align*}
    \left| H_m(x) - \sum_{i=0}^{m-1} H\left( x + \frac im \right)\right| &= \left| \sum_{i=0}^{m-1} \left( H(x + i \alpha) - H \left(x + \frac{\sigma(i)}{m} \right) \right)\right| \\
    &\leq  \sum_{i=0}^{m-1} \left| H(x + i \alpha) - H \left(x + \frac{\sigma(i)}{m} \right) \right| \\
    &\leq  \sum_{i=0}^{m-1} L \left\|  i \alpha  -  \frac{\sigma(i)}{m} \right\| \leq L \delta.
\end{align*}
It follows from \eqref{eqn_riemann_sum_estimate} that
\[\big|H_m(x) \big| \leq L \delta + \frac 1{12} < \frac 16.\]
Since $\| m \beta \| > 1/3$, we have that $\|H_m(x) + m \beta \| > 1/6$, as was to be shown.  This concludes the proof of \cref{maintheorem_hidden_frequencies}.
\end{proof}

\section{Katznelson's Question in a combinatorial framework}
\label{sec_combinatorial_setting}

As recounted in \cref{sec_history}, combinatorial forms of Katznelson's Question and its relatives were considered long before Katznelson and others popularized them in dynamical form.  In this section, we provide some of those combinatorial formulations and prove the equivalence between them.  We also prove \cref{maintheorem_combinatorial_form}, a combinatorial corollary to our main dynamical result, \cref{maintheorem_katznelson_for_skew_towers}.

Recall that a set $A\subseteq \N$ is syndetic if there exists a finite set $F \subseteq \N$ such that $A-F \supseteq \N$.  The set $A$ is \emph{piecewise syndetic} if there exists a finite set $F \subseteq \N$ such that $A-F$ contains arbitrarily long intervals (i.e., is \emph{thick}).

\subsection{Combinatorial forms of Katznelson's Question}
\label{sec_comb_forms_of_katznelson}

In what follows, the phrase ``Question A implies Question B'' means that a positive answer to Question A yields a positive answer to Question B.  We say that Questions A and B are \emph{equivalent} if $A$ implies $B$ and $B$ implies $A$. We will prove the equivalence between the various forms of Katznelson's Question posed in \cref{sec_overview} indirectly, beginning first with some alternate combinatorial formulations.

\begin{remark}
For most of the questions posed in this paper, there is not a material difference between the set of positive integers $\N$ and the set of all integers $\Z$.  Some combination of the following three facts generally suffices to prove the equivalence between analogous questions in these two settings: (1) a Bohr neighborhood of zero is symmetric about 0 and, when restricted to $\N$, is a \bz{} set; (2) if $A \subseteq \N$ is a \bz{} set, then the set $A \cup (-A) \cup \{0\}$ contains a Bohr neighborhood of zero; and (3) if $A$ is a syndetic subset of $\N$, then $A \cup (-A)$ is syndetic in $\Z$, and if $A$ is syndetic in $\Z$, then $A \cap \N$ is syndetic in $\N$.
\end{remark}

Since syndetic sets are piecewise syndetic, a positive answer to the following question implies a positive answer to the combinatorial form of Katznelson's Question.

\begin{namedthm}{Question C1}
    If $A \subseteq \N$ is piecewise syndetic, does its set of differences $A-A$ contain a Bohr neighborhood of zero?
\end{namedthm}

To see that Question C1 implies the combinatorial form of Katznelson's Question, we will use the fact that if $A \subseteq \N$ is piecewise syndetic, then it is \emph{broken syndetic}: there exists a syndetic set $S \subseteq \N$ with the property that for all finite $F \subseteq S$, there exists $n \in \N$ such that $n+F \subseteq A$; see \cite[Proof of Theorem 1]{ruzsa_difference_sets_unpublished_1985}.  It follows immediately that $S-S \subseteq A-A$, and hence that $S-S$ being a \bz{} implies that $A-A$ is a \bz{} set.

While the property of being syndetic is not partition regular, the related notion of piecewise syndeticity is; see \cite[Theorem 1.24]{furstenberg_book_1981}. Since one cell of any finite partition of $\N$ is piecewise syndetic, Question C1 implies the following useful combinatorial form of Katznelson's Question.

\begin{namedthm}{Question C2}
    If $\N = \cup_{i=1}^r A_i$ is a finite partition of $\N$, does the set $\cup_{i=1}^r (A_i - A_i)$ contain a Bohr neighborhood of zero?
\end{namedthm}

To see that Question C2 implies C1, it suffices to show that Question C2 implies Katznelson's Question. If $A \subseteq \N$ is syndetic, then there exists $r \in \N$ such that $\N = \cup_{i=1}^r (A - i)$.  It follows from C2 that the set
\[\bigcup_{i=1}^r \big( (A - i) - (A - i)\big) = A-A\]
is a \bz{} set, yielding a positive answer to Katznelson's Question.  Thus, Questions C1 and C2 are equivalent forms of Katznelson's Question.  This equivalence is also proved using different terminology in \cite[Theorem~1]{ruzsa_difference_sets_unpublished_1985}.

\begin{remark}
The foregoing sequence of questions may lead one to wonder whether or not the family of subsets of $\N$ that have \bz{} large differences is partition regular.  This is not the case, as can be seen by the following example.  Let $A \subseteq \N$ be a set of positive upper asymptotic density that does not have \bz{} large differences; such a set exists by an example of K\v{r}\'{i}\v{z} \cite{kriz1987}.  The set of differences of $A$ is syndetic \cite[Proposition 3.19]{furstenberg_book_1981}, so there exists $\ell \in \N$ such that $A-A-\{1, \ldots, \ell\} = \N$.  Put $B = A \cup (A+1) \cup \cdots \cup (A+\ell)$.  Because $B-B = \N$, the set $B$ has \bz{} large differences, but no cell $A + i$ of the partition of $B$ has \bz{} large differences.
\end{remark}

\begin{lemma}
Katznelson's Question is equivalent to its combinatorial formulation.
\end{lemma}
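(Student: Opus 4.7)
The plan is to prove both implications via the reformulation of Katznelson's Question as Question D1 from \cref{sec_dynamically_equivalent_forms}, which asserts that every topological dynamical system has \bz{} large returns.

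For the implication from the combinatorial form to Question D1, let $(X,T)$ be a system and let $\eps > 0$. Since any minimal subsystem $(M,T) \subseteq (X,T)$ satisfies $\dyret_\eps(M,T) \subseteq \dyret_\eps(X,T)$, we may assume $(X,T)$ is minimal. Fix $x_0 \in X$ and put $U = B_{\eps/3}(x_0)$. Minimality gives that $A \defeq \{ n \in \N \ | \ T^n x_0 \in U\}$ is syndetic, so the combinatorial hypothesis produces a Bohr neighborhood of zero inside $A - A$, whose positive part is a \bz{} set $B$ contained in $A - A$. For $n \in B$, writing $n = m_1 - m_2$ with $m_1, m_2 \in A$ yields $T^{m_1} x_0, T^{m_2} x_0 \in U$, so $d_X(T^{m_2} x_0, T^{n} T^{m_2} x_0) < 2\eps/3 < \eps$. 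Thus $B \subseteq \dyret_\eps(X,T)$, and $\dyret_\eps(X,T)$ is a \bz{} set.

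For the converse, given syndetic $A \subseteq \N$, form the symbolic system $(Y, \sigma)$, where $\sigma$ is the left shift on $\{0,1\}^{\N \cup \{0\}}$ and $Y$ is the forward orbit closure of the indicator sequence $\one_A$. Let $(M, \sigma) \subseteq (Y, \sigma)$ be a minimal subsystem and $U \defeq \{y \ | \ y(0) = 1\}$. The set $M \cap U$ is nonempty: otherwise, $\sigma$-invariance would force every $y \in M$ to be the identically-zero sequence, but this point does not lie in $Y$, because the syndeticity of $A$ ensures that any accumulation point of forward shifts of $\one_A$ carries a $1$ in every window of length equal to the gap bound of $A$. Applying Question D2 (equivalent to D1 via \cref{lemma_containment_amongst_recurrence_types}) to $(M, \sigma)$ and the nonempty open set $M \cap U$, the set $R \defeq \{ n \in \N \ | \ (M \cap U) \cap \sigma^{-n}(M \cap U) \neq \emptyset\}$ is a \bz{} set. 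For each $n \in R$, there is $y \in M$ with $y(0) = y(n) = 1$; writing $y$ as a limit of $\sigma^{m_k} \one_A$ with $m_k \geq 0$, for $k$ large we have $m_k, m_k + n \in A$, whence $n \in A - A$. Thus $(A-A) \cap \N$ contains the \bz{} set $R$, and the symmetry of $A-A$ about $0$ upgrades this to the conclusion that $A - A$ contains a full Bohr neighborhood of zero in $\Z$.

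The main subtlety is verifying $M \cap U \neq \emptyset$, where syndeticity of $A$ is essential to exclude the zero sequence from the forward orbit closure $Y$. The remaining bookkeeping between \bz{} sets in $\N$ and Bohr neighborhoods of zero in $\Z$ is handled cleanly by the symmetry of $A-A$ about the origin.
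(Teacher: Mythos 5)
Your proof is correct, and it takes a genuinely different route from the paper's. The paper proves the equivalence between Question D1 and Question C2 (the partition form), relying on the previously established chain C2 $\Leftrightarrow$ C1 $\Leftrightarrow$ (combinatorial form); you instead prove D1 $\Leftrightarrow$ (combinatorial form) directly, which shortens the chain. In the forward direction, the paper pulls back a finite cover of $X$ by small balls through the orbit map $n \mapsto T^n x$ of an arbitrary point, obtaining a partition of $\N$ whose union of difference sets sits inside $\dyret_\eps(X,T)$ -- no reduction to minimal systems is required. You instead pass to a minimal subsystem, fix a single ball around a point $x_0$, and invoke the syndeticity of the visit-time set $A = \{n : T^n x_0 \in U\}$, which is the minimal-system input your direct use of the syndetic form requires; your argument that $n \in A - A$ gives an $\eps$-return via $T^{m_2} x_0$ is clean. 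In the backward direction, the paper takes the shift orbit closure of a coloring sequence $f$ and exploits that $f$ itself has a dense orbit in $X = \overline{\{T^n f\}}$, so it never has to locate a minimal subsystem or argue about the all-zeros point. You instead take the orbit closure of $\one_A$, pass to a minimal subsystem $M$, and must verify $M \cap U \neq \emptyset$ -- which is exactly where the syndeticity of $A$ does the work, ruling out the all-zeros sequence -- then apply Question D2 rather than D1. Both approaches are valid; the paper's avoids minimality arguments entirely, while yours is arguably more naturally phrased for the syndetic formulation and makes the role of syndeticity in the backward direction explicit (a point the paper's partition-based argument sidesteps). One minor remark: in the last step of your backward direction you should note that for $k$ large the shift index $m_k$ is necessarily at least $1$ (since $\one_A(0) = 0$), so the membership $m_k \in A$ is legitimate -- this is automatic, but worth flagging.
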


\begin{proof}
We will show the equivalence between Question D1 (from \cref{sec_dynamically_equivalent_forms}) and Question C2.  This equivalence has been documented a number of times in the literature; see, for example, \cite[Lemma 4.5]{boshernitzanglasner2009} or \cite[Proof of Theorem 2.1]{KatznelsonChromaticNumber2001}.  Since the argument is short, we provide it here for completeness.

To see that Question C2 implies Question D1, suppose $(X,T)$ is a system and $\eps > 0$.  Let $X = \cup_{i=1}^r B_i$ be a cover of $X$ by finitely many balls of diameter less than $\eps$.  Fix $x \in X$, and pull the cover of $X$ back through the map $n \mapsto T^n x$ to a cover $\N = \cup_{i=1}^r A_i$ so that $n \in A_i$ implies that $T^n x \in B_i$.  It is quick to check that $\cup_{i=1}^r (A_i - A_i) \subseteq \dyret_\eps(X,T)$, whereby a positive answer to C2 implies a positive answer to D1.

That Question D1 implies C2 relies on a correspondence principle.  Suppose $\N = \cup_{i=1}^r A_i$, and let $f: \N \to \{1, \ldots, r\}$ be such that for all $n \in \N$, $n \in A_{f(n)}$.  The sequence $f$ belongs to the compact metric space $(\{1, \ldots, r\}^\N, d)$, on which we consider the left-shift map $T$.  Put $X = \overline{ \{ T^n f \ | \ n \in \N \} }$. Let $\eps > 0$ be such that if $x, y \in X$ satisfy $d_X(x, y) < \eps$, then $x(1) = y(1)$. If $m \in \dyret_\eps(X,T)$, then there exists $x \in X$ such that $x(1) = x(m+1)$. Since $f$ has a dense orbit in $X$, there exists $n \in \N$ such that $f(n+i) = x(i)$ for $i = 1, \ldots, m+1$.  It follows that $f(n+1) = f(n+m+1)$, whereby $n+1$ and $n+m+1$ are in the same element of the cover.  Therefore, $m \in \cup_{i=1}^r (A_i - A_i)$.  We've shown that $\dyret_\eps(X,T) \subseteq \cup_{i=1}^r (A_i - A_i)$, whereby a positive answer to D1 implies a positive answer to C2.
\end{proof}

In \cite{KatznelsonChromaticNumber2001}, Katznelson's Question appears in terms of chromatic numbers of certain graphs on $\N$. For $R \subseteq \N$, define a graph $G_R$ on $\N$ by putting an edge between $n, m \in \N$ if and only if $|n-m| \in R$.  Denote by $\chi(G_R)$ the chromatic number of the graph $G_R$.

\begin{namedthm}{Question C3}[{\cite{KatznelsonChromaticNumber2001}}]
    If $R$ is a \bzs{} set, is $\chi(G_R) = \infty$?
\end{namedthm}

This question is quickly seen to be a reformulation of Question C2, and hence of Katznelson's Question.  Indeed, a finite partition $\N = \cup_{i=1}^r A_i$ is exactly a finite coloring of $\N$.  By \cref{rmk_equiv_forms_of_bohr_and_bohrzero}, the set $\cup_{i=1}^r (A_i - A_i)$ is a \bz{} set if and only if for all \bzs{} sets $R \subseteq \N$,
\begin{align}
\label{eqn_bzs_intersects_union_of_differences}
    R \cap \bigcup_{i=1}^r (A_i - A_i) \neq \emptyset.
\end{align}
Note that \eqref{eqn_bzs_intersects_union_of_differences} holds if and only if there are adjacent vertices in the graph $G_R$ with the same color.  Thus, both C2 and C3 ask whether or not \eqref{eqn_bzs_intersects_union_of_differences} holds for all finite colorings of $\N$ and all \bzs{} sets $R \subseteq \N$.

The following question appears easier to answer than the combinatorial form of Katznelson's Question; it is, in fact, shown to be equivalent by an elementary argument.  Note that it was shown in \cite{elliskeynes1972} that the set $A+A+A$ contains a Bohr set when $A \subseteq \Z$ is syndetic.

\begin{namedthm}{Question C4}
    If $A \subseteq \Z$ is syndetic, contains 0, and satisfies $A = -A$, is the triple sumset $A + A + A$ a \bz{} set?
\end{namedthm}

To see that Katznelson's Question implies Question C4, suppose $A \subseteq \Z$ has the properties stipulated in C4.  Since $A-A \subseteq A+A+A$, if the combinatorial form of Katznelson's Question has a positive answer, then $A+A+A$ is a \bz{} set.  To see that Question C4 implies Katznelson's Question, we borrow a clever argument from \cite[Theorem 2]{ruzsa_difference_sets_unpublished_1985}; see also \cite[Proof of Theorem 2.1]{hegyvariruzsa2016}.  Suppose the combinatorial form of Katznelson's Question has a negative answer: there exists a syndetic set $A \subseteq \Z$ for which $A-A$ is not a \bz{} set. Put
\[A_0 = (4A + 1) \cup -(4A + 1) \cup \{0\}.\]
Clearly $A_0$ is syndetic, contains 0, and satisfies $A_0 = -A_0$. Considering residues modulo 4, it is quick to show that
\[(A_0 + A_0 + A_0)/4 \subseteq A-A.\]
Since $A-A$ is not a \bz{} set, neither is the set $(A_0 + A_0 + A_0) / 4$.  By \cref{lemma_dilates_of_bohr_are_bohr}, it follows that the set $A_0 + A_0 + A_0$ is not a \bz{} set, answering Question C4 in the negative.

There is a useful dialogue between sequences (more generally, functions) and dynamics; see \cite[Theorem 5.6]{boshernitzanglasner2009} for a particular connection between sequences and recurrence and \cite{weiss_single_orbit_book} for a broader view.  This is part of the reason why the sequential formulation of Katznelson's Question echoes the dynamical one.

\begin{lemma}
\label{lemma_katz_equivalent_to_sequential_katz}
Katznelson's Question is equivalent to its sequential formulation.
\end{lemma}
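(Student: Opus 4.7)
My plan is to establish the equivalence by showing that the sequential form is equivalent to the dynamical formulation Question~D1 from \cref{sec_dynamically_equivalent_forms}, which is already known to be equivalent to Katznelson's Question. For the direction D1 $\Rightarrow$ Sequential, given $f\colon\Z\to\T$ and $\eps>0$, I would take the orbit closure $X=\overline{\{\sigma^n f : n\in\Z\}}\subseteq \T^\Z$ of $f$ under the shift $\sigma$, equipped with a compatible metric such as $d_X(g,h) = \sum_{n\in\Z}2^{-|n|}\|g(n)-h(n)\|$. The evaluation-at-zero map $\phi\colon X\to\T$, $\phi(g)=g(0)$, is $1$-Lipschitz, and since the shifts of $f$ are dense in $X$, one obtains the containment $\dyret_\eps(X,\sigma)\subseteq \{m\in\N : \inf_{n\in\N}\|f(n+m)-f(n)\|<\eps\}$. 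By D1 the left side is a \bz{} set, and by symmetry of the sequential returns set about zero this lifts to a Bohr neighborhood of zero in $\Z$.

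For the converse Sequential $\Rightarrow$ D1, given a system $(Y,T)$ and $\eps>0$, I would pass to a minimal subsystem $Y_0\subseteq Y$, pick $x_0\in Y_0$ with dense orbit, and fix an $(\eps/3)$-dense subset $\{y_1,\ldots,y_r\}\subseteq Y_0$. Setting $\phi_i(y)=d_{Y_0}(y,y_i)/(2\operatorname{diam} Y_0)\in\T$, the triangle inequality gives
\[
d_{Y_0}(y,T^m y)\leq 2\operatorname{diam}(Y_0)\cdot\max_i\|\phi_i(T^m y)-\phi_i(y)\|+2\eps/3,
\]
so simultaneous recurrence in the $r$ observables $\phi_i$ forces metric recurrence of the point. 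The core task is then to encode the joint trajectory $(\phi_i(T^n x_0))_{i=1}^r$ into a single $\T$-valued sequence to which the sequential form can be applied, transferring the resulting Bohr neighborhood back to the set $\dyret_\eps(Y_0,T)\subseteq\dyret_\eps(Y,T)$.

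The principal obstacle is exactly this coordinate-to-joint reduction: the $\inf_n$ supplied by the sequential form, applied to each coordinate $f_i(n)=\phi_i(T^n x_0)$ separately, allows a different witness $n$ per coordinate, whereas the metric estimate demands a single $n$ controlling all coordinates simultaneously. An alternative and perhaps cleaner route is to proceed through the partition form Question~C2: given a partition $\N=A_1\cup\cdots\cup A_r$, pick separated points $t_1,\ldots,t_r\in\T$ and extend the induced coloring to $\Z$ via a carefully chosen almost-periodic sequence on the negative integers, so that the sequential returns set of $f(n)=t_{c(n)}$ at a sufficiently small scale is contained in $\bigcup_i(A_i-A_i)$ up to a set that can be excised by intersecting with additional Bohr conditions. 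Either route requires routine but delicate bookkeeping to handle the boundary effects arising from the asymmetry between $\N$ and $\Z$.
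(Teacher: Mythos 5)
The D1~$\Rightarrow$~Sequential direction of your proposal is sound and is a genuinely different route from the paper's. You pass to the orbit closure of $f$ in $\T^{\Z}$ under the shift and use the $1$-Lipschitz evaluation map, then invoke D1 on the resulting symbolic system; the paper instead proves C2~$\Rightarrow$~Sequential by covering $\T$ with finitely many $\eps$-balls and pulling that cover back through $f$ to a partition of $\N$. Both work, and the paper's version is arguably lighter since it avoids introducing the orbit-closure machinery, but yours is correct.

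The converse is where you have a genuine gap. Your primary route via the observables $\phi_i(y)=d_{Y_0}(y,y_i)/(2\operatorname{diam} Y_0)$ fails for the reason you identify: the sequential form, applied coordinatewise, supplies a different witness $n$ for each $\phi_i$, while the metric estimate needs one $n$ controlling all $r$ observables at once. You correctly recognize this cannot be patched, but your fallback via C2 is underspecified and aims at the wrong mechanism. You propose extending the coloring to $\Z$ by an ``almost-periodic sequence on the negative integers'' and then ``excising'' spurious contributions by intersecting with additional Bohr conditions; neither the almost periodicity nor the excision is the right move, and the excision in particular is problematic since a single bad $m$ with a negative witness would be enough to break the containment into $\bigcup_i(A_i-A_i)$, and there is no Bohr-type constraint that obviously removes only those $m$. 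The construction that actually works, which is the paper's, is much more rigid: given $\N=\bigcup_{i=1}^r A_i$ with the $A_i$ disjoint, choose $2r+1$ \emph{distinct} points $t_{-r},\dots,t_{-1},t_0,t_1,\dots,t_r\in\T$ and define $f(0)=t_0$, $f(n)=t_i$ for $n\in A_i$, and $f(-n)=t_{-i}$ for $n\in A_i$. Taking $\eps<\min_{i\neq j}\|t_i-t_j\|$, the condition $\|f(n+m)-f(n)\|<\eps$ forces $f(n+m)=f(n)$; since positive indices, negative indices, and $0$ use pairwise disjoint values, for $m>0$ the witness $n$ and $n+m$ must either both be positive and in the same $A_i$, or both negative with $-n,-(n+m)\in A_i$, and in either case $m\in A_i-A_i$. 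There is nothing to excise because the value-disjointness already forbids cross-sign matches. This is the idea your sketch is missing.
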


\begin{proof}
It is easier to see the equivalence between the sequential form of Katznelson's Question and Question C2.  To see that the former implies the latter, suppose $\N = \cup_{i=1}^r A_i$; without loss of generality, we may assume that the sets $A_i$ are disjoint.  Choose $2r+1$ distinct points on the 1-torus, $t_{-r}, \dots, t_r \in \T$, and define $f: \Z \to \T$ by $f(0)= t_0$ and, for $n \in A_i$, $f(n) = t_i$ and $f(-n) = t_{-i}$.  If $\eps < \min_{i \neq j} \| t_i - t_j\|$, then
\[\big\{ m \in \Z \ | \ \inf_{n \in \Z} \| f(n+m) - f(n) \| < \eps \big\} \subseteq \bigcup_{i=1}^r (A_i - A_i).\]
Thus, a positive answer to the sequential form of Katznelson's Question implies a positive answer to Question C2.

To show the converse, let $f: \Z \to \T$ and $\eps > 0$. Cover $\T$ by finitely many balls of diameter $\eps$: $\T = \cup_{i=1}^r B_i$.  Pull this cover of $\T$ back through $f$ to get a finite partition $\Z = \cup_{i=1}^r A_i$, and restrict this partition to one of $\N$.  It is quick to check that
\[\bigcup_{i=1}^r (A_i - A_i) \subseteq \big\{ m \in \Z \ | \ \inf_{n \in \Z} \| f(n+m) - f(n) \| < \eps \big\},\] whereby a positive answer to C2 yields a positive answer to the sequential form of Katznelson's Question.
\end{proof}

\begin{remark}
It is clear from the argument that the converse implication {in \cref{lemma_katz_equivalent_to_sequential_katz}} depends only on the total boundedness of $\T$.  Thus, {one} can formulate an equivalent question that appears more difficult to answer by replacing $\T$ by an arbitrary totally bounded metric space in the sequential formulation of Katznelson's Question.
\end{remark}

\begin{remark}
\label{rmk_special_case_of_seq_form}
The following special case of the sequential form of Katznelson's Question is, in fact, equivalent to the more general form stated in the introduction: \emph{Is it true that for all $\alpha \in \R$ and all $\eps>0$, the sequence $f: n \mapsto \{2^n \alpha\}$ is such that the set
\begin{align*}
    \big\{ m \in \Z \ | \ \inf_{n \in \Z} \| f(n+m) - f(n) \| < \eps \big\}
\end{align*}
contains a Bohr neighborhood of zero?} To see that the two are equivalent, it suffices by \cref{lemma_katz_equivalent_to_sequential_katz} to show that a positive answer to the special case yields a positive answer to Question C2.

Suppose $\N = \cup_{i=1}^r A_i$, and define $c: \N \to \{1, \ldots, r\}$ so that for all $n \in \N$, $n \in A_{c(n)}$.  Choose $k \in \N$ so that $2^k > 2r$, and define
\[\alpha = \sum_{n = 1}^\infty \frac{2 c(n)}{2^{kn}}.\]
Note that if $m, n \in k\N$ and $\| 2^{n+m}\alpha - 2^n \alpha \| < 1/2^{k}$, then $c\big( (n+m) / k \big) = c(n/k)$. It follows by the special case of the sequential form of Katznelson's Question stated above and by \cref{lemma_dilates_of_bohr_are_bohr} that the set
\[B \defeq \big\{ m \in \N \ | \ \inf_{n \in \N} \| 2^{n+m}\alpha - 2^n \alpha \| < 1/2^{2k} \big\} \cap k \N \]
is a \bz{} set.  Note that if $n \in \N$ is such that $\| 2^{n+m}\alpha - 2^n \alpha \| < 1/2^{2k}$, then there exists $n' \in k \N$ such that $\| 2^{n'+m}\alpha - 2^{n'} \alpha \| < 1/2^{k}$.  Thus, for all $m \in B$, there exists $n \in k\N$ such that $c\big( (n+m) / k \big) = c(n/k)$.  This implies that the set $B/k$, which is a \bz{} set by \cref{lemma_dilates_of_bohr_are_bohr}, is contained in the set $\cup_{i=1}^r (A_i - A_i)$, yielding a positive answer to Question C2.
\end{remark}

\begin{remark}
A minimal system $(X,T)$ has \bz{} large returns if and only if for all $\varphi: X \to \R$ continuous and all $x \in X$, the observable sequence $f: n \mapsto \varphi(T^n x)$ is such that
\[\big\{ m \in \N \ | \ \inf_{n \in \N} \| f(n+m) - f(n) \| < \eps \big\}\]
is a \bz{} set. We will not have use for this connection explicitly in this paper, so we leave the verification of this fact to the curious reader.
\end{remark}

We conclude this section with a formulation of Katznelson's Question in terms of the lengths of zero-sum blocks of cyclic-group-valued sequences. A \emph{zero-sum-block} for $f: \Z \to \Z / k \Z$ is an interval on which $f$ sums to zero; this notion does not appear to be well-studied, but did appear recently in the literature \cite{caro_hansberg_montejano_zero_sum_subsequences2016}.

\begin{namedthm}{Question C5}
    If $f: \Z \to \Z / k \Z$, is the set of lengths of zero-sum blocks
    \begin{align}
    \label{eqn_set_in_q_s4}
    \big\{ m \in \N \ \big| \ \exists\; n \in \Z, \ f(n) + \cdots + f(n+m-1) = 0 \big\}
    \end{align}
     a \bz{} set?
\end{namedthm}

To see that Question C5 implies C2, suppose $\N = \cup_{i=1}^r A_i$.  Define $g: \Z \to \{-r, \dots, r\}$ by $g(0) = 0$ and, for $n \in A_i$, $g(n) = i$ and $g(-n) = -i$.  Put $k = 2r+1$, and define $f: \Z \to \Z / k\Z$ by $f(n) = g(n+1) - g(n)$ modulo $k$.  If $f(n) + \cdots + f(n+m-1) = 0$, then $g(n) = g(n+m)$, whereby $m \in A_i - A_i$ for some $i \in \{1, \ldots, r\}$.  Therefore, the set of lengths of zero-sum blocks for $f$ is contained in the set $\cup_{i=1}^r (A_i - A_i)$, implying that a positive answer to C5 yields a positive answer to C2.

To see the converse, let $f: \Z \to \Z / k \Z$.  Define $g: \N \to \Z / k \Z$ by $g(n) = \sum_{i=1}^{n-1} f(i)$.  For $i \in \{0, \ldots, k-1\}$, let $A_i$ be the set of those $n \in \N$ for which $g(n) \equiv i \pmod k$.  The set $\cup_{i=0}^{k-1} (A_i - A_i)$ is a subset of the set in \eqref{eqn_set_in_q_s4}. Since $\cup_{i=0}^{k-1} (A_i - A_i)$ is a \bz{} set, so is the set in \eqref{eqn_set_in_q_s4}.

\begin{remark}
\label{rmk_careful_with_classes}
Finite cyclic groups are not essential to the formulation of Question C5. If $G$ is a compact abelian group with invariant metric $d_G$ and $f: \Z \to G$, an \emph{$\eps$-sum-block} is an interval on which $f$ sums to within $\eps$ of the identity 0.  Using the same reasoning as above, it is easy to see that Question C5 is equivalent to the ostensibly more difficult question obtained by replacing the set in \eqref{eqn_set_in_q_s4} with
\[\big\{ m \in \N \ \big| \ \inf_{n \in \Z} d_G\big( f(n) + \cdots + f(n+m-1) , 0 \big) < \eps \big\}.\]
    
It should also be noted that a positive answer to the sequential form of Katznelson's Question for a class of sequences does not necessarily imply a positive answer to Question C5 for the same class.  For example, the sequential form of Katznelson's Question trivially has a positive answer for almost periodic sequences. It does not follow, however, from the equivalence described above that Question C5 has a positive answer for almost periodic sequences.  In fact, it is true that Question C5 has a positive answer for AP sequences, but this is the result of \cref{maintheorem_combinatorial_form} which requires additional arguments.
\end{remark}

A number of open questions closely related to Katznelson's Question are presented in \cref{sec_katznelson_adjacent}.

\subsection{Combinatorial results: a proof of \cref{maintheorem_combinatorial_form}}
\label{sec_proof_of_comb_form}

In this section, we provide positive answers to the combinatorial form of Katznelson's Question for 2-syndetic sets and for certain classes of syndetic sets that arise naturally in topological dynamics.  The first is accomplished by a simple combinatorial argument, while the second -- made precise in the statement of \cref{maintheorem_combinatorial_form} -- is derived from \cref{maintheorem_katznelson_for_skew_towers}.

\begin{theorem}
\label{theorem_two_coloring_Katznelson}
For all $2$-colorings $\N = A_1 \cup A_2$, either $(A_1 - A_1) \cup (A_2 - A_2) \supseteq \N$ or there exists $d \in \N$ for which $d \N \subseteq (A_1 - A_1) \cap (A_2 - A_2)$.  In particular, the set $(A_1 - A_1) \cup (A_2 - A_2)$ is a \bz{} set.
\end{theorem}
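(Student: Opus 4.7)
The plan is to proceed by a short case analysis: either every $n \in \N$ already lies in $(A_1 - A_1) \cup (A_2 - A_2)$, in which case the first alternative holds trivially, or there is some $n \in \N$ that is missed by both difference sets, and I will show that this forces the coloring to be $2n$-periodic in a very strong way, which yields the second alternative with $d = 2n$.

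First I would treat the trivial degenerate case where one of the $A_i$ is empty (so the other equals $\N$ and its difference set contains $\N$), and then assume $A_1, A_2$ are both nonempty. Suppose $n \in \N$ is not in $(A_1 - A_1) \cup (A_2 - A_2)$. By the definition of $A_i - A_i$, this means that for every $k \in \N$, the elements $k$ and $k+n$ cannot both lie in the same $A_i$. Since $\{A_1, A_2\}$ is a partition, it follows that $k \in A_1 \Leftrightarrow k+n \in A_2$ and $k \in A_2 \Leftrightarrow k+n \in A_1$; that is, shifting by $n$ swaps the two colors.

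Applying this ``color-swap'' property twice shows that shifting by $2n$ preserves colors: $k \in A_i \Leftrightarrow k+2n \in A_i$ for $i=1,2$ and every $k \in \N$. Iterating, $k \in A_i$ implies $k + 2nj \in A_i$ for all $j \in \N$. Since both $A_1, A_2$ are nonempty, picking any $k \in A_i$ gives $2nj \in A_i - A_i$ for every $j \in \N$ and each $i$, so
\[
d\N \;\subseteq\; (A_1 - A_1) \cap (A_2 - A_2), \quad \text{where } d := 2n.
\]
This establishes the main dichotomy. The ``in particular'' clause then follows because $d\N$ contains the Bohr$_0$ set $\{m \in \N : \|m/d\| < 1/(2d)\}$ (and $\N$ is itself a Bohr$_0$ set), so in either case $(A_1 - A_1) \cup (A_2 - A_2)$ is a Bohr$_0$ set.

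There is no real obstacle here; the entire argument is elementary, hinging only on the observation that in a $2$-coloring, a missing difference forces the coloring to behave like a ``flip'' under translation by $n$. The one small care required is to handle the degenerate case where one color class is empty separately, so that we are justified in choosing $k$ in each $A_i$ to produce the containment $d\N \subseteq A_i - A_i$.
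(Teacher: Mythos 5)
Your proof is correct and follows essentially the same argument as the paper: a missing difference $d$ (your $n$) forces $A_1 + d \subseteq A_2$ and $A_2 + d \subseteq A_1$, hence $A_i + 2d \subseteq A_i$, giving $2d\N \subseteq (A_1 - A_1) \cap (A_2 - A_2)$. Your explicit attention to the degenerate case of an empty color class is a sensible small refinement, though it is already implicitly covered since an empty $A_i$ forces the first alternative.
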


\begin{proof}
If $(A_1 - A_1) \cup (A_2 - A_2) \supseteq \N$, then the conclusion of the theorem holds.  Otherwise, there exists $d \in \N \setminus \big( (A_1 - A_1) \cup (A_2 - A_2)\big)$.  Since $d \notin A_1 - A_1$, we see that $A_1 + d \subseteq A_2$.  Similarly, $A_2 + d \subseteq A_1$.  It follows that $A_1 + 2d \subseteq A_1$ and $A_2 + 2d \subseteq A_2$, and hence that $2d \N \subseteq (A_1 - A_1) \cap (A_2 - A_2)$.
\end{proof}

The next corollary follows immediately from \cref{theorem_two_coloring_Katznelson}.  It is still not known whether or not the final conclusion in \cref{theorem_two_coloring_Katznelson} holds for all 3-colorings of $\N$; see \cref{question_3_syndetic_sets} in \cref{sec_next_steps}.

\begin{corollary}
\label{cor_differences_of_a_two_syndetic_set}
Let $A \subseteq \N$.  If $A \cup (A-\ell) \supseteq \N$, then there exists $d \in \N$ such that $d \N \subseteq A-A$.  In particular, the set $A-A$ is a \bz{} set.
\end{corollary}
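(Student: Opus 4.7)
My plan is to apply \cref{theorem_two_coloring_Katznelson} directly to the 2-coloring $\N = A_1 \cup A_2$ with $A_1 \defeq A$ and $A_2 \defeq \N \setminus A$. If $A_2 = \emptyset$ then $A = \N$ and there is nothing to show (take $d = 1$), so I assume $A_2 \neq \emptyset$. The first preparatory step is to translate the hypothesis $A \cup (A-\ell) \supseteq \N$ into a useful inclusion: it says precisely that every $n \in A_2$ satisfies $n + \ell \in A_1$, i.e., $A_2 + \ell \subseteq A_1$. Consequently
\[
A_2 - A_2 = (A_2+\ell) - (A_2+\ell) \subseteq A_1 - A_1 = A-A,
\]
so both $A_1-A_1$ and $A_2-A_2$ sit inside $A-A$.

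Next I feed this into the dichotomy of \cref{theorem_two_coloring_Katznelson}. In the first case, $(A_1-A_1) \cup (A_2-A_2) \supseteq \N$, which combined with the inclusion above forces $A-A \supseteq \N$, and $d = 1$ works. In the second case, \cref{theorem_two_coloring_Katznelson} supplies $d \in \N$ with $d\N \subseteq (A_1-A_1) \cap (A_2-A_2) \subseteq A-A$. Either way I obtain the first conclusion: some $d \in \N$ with $d\N \subseteq A-A$.

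For the ``in particular'' clause, I would note that $d\N$ is itself a \bz{} set: taking the $1$-dimensional frequency $\alpha = 1/d \in \T$ and any $\delta \in (0, 1/d)$, one has $\{n \in \N \, : \, \|n\alpha\| < \delta\} = d\N$, so $d\N$ is a Bohr neighborhood of zero intersected with $\N$. Since $A-A$ contains the \bz{} set $d\N$, the upward closure of the family of \bz{} sets, recorded in \cref{rmk_equiv_forms_of_bohr_and_bohrzero}, gives that $A-A$ is itself a \bz{} set. There is no real obstacle; the corollary is an immediate consequence of \cref{theorem_two_coloring_Katznelson}, the only observation required being the translation identity $A_2 - A_2 \subseteq A_1 - A_1$ that collapses both branches of the dichotomy into the desired conclusion about $A-A$.
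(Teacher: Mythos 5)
Your proof is correct, and it follows the paper's intended route: the paper states that \cref{cor_differences_of_a_two_syndetic_set} ``follows immediately'' from \cref{theorem_two_coloring_Katznelson}, and your argument is a clean realization of that. One small remark on the choice of $2$-coloring: since the proof of \cref{theorem_two_coloring_Katznelson} only uses that $\N = A_1 \cup A_2$ is a \emph{cover} (not a partition), an even more direct option is to take $A_1 = A$ and $A_2 = (A-\ell)\cap\N$, for which $A_1 - A_1 = A-A$ and $A_2 - A_2 \subseteq A-A$ hold with no extra step, so both branches of the dichotomy land in $A-A$ immediately. Your version with the genuine partition $A_1 = A$, $A_2 = \N\setminus A$ requires the additional (but easy) observation $A_2 + \ell \subseteq A$, giving $A_2 - A_2 \subseteq A-A$; this is perfectly valid and arguably safer if one insists on reading ``$2$-coloring'' as a partition. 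The handling of the degenerate case $A_2 = \emptyset$ and the ``in particular'' clause via the upward closure of \bz{} sets are both fine.
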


We turn now to the proof of \cref{maintheorem_combinatorial_form}. We will make use of systems of the form $(X \times \T^k, T_{h,\id,\ldots,\id})$, where the map $T_{h,\id,\ldots,\id}: X \times \T^k \to X \times \T^k$ is defined by \[T_{h,\id,\ldots,\id}(x,t_1,\ldots,t_k)= \big(Tx,t_1+h(x),t_2+t_1,\ldots,t_k+t_{k-1} \big).\]
Thus, the system $(X \times \T^k, T_{h,\id,\ldots,\id})$ is an iterated skew product system, where the identity skewing map is repeated $k-1$ many times, over a base skew product system $(X \times \T, T_h)$. We denote by $\pi_i$ the projection onto the $i^{\text{th}}$ coordinate.

For $1\leq j\leq k$, define $\vec{t}_j:=(t_1,\ldots,t_j)$.  A straightforward induction shows that for all $n\in\N \cup \{0\}$,
\begin{align}
\label{eqn_skew_map_calculation_for_iterated_id}
    \begin{aligned}T^n_{h,\id,\ldots,\id}&(x,\vec{t}_k) = \\
    &\big(T^n x, \ p_{\vec{t}_1}(n)+h_{1,n}(x), \ p_{\vec{t}_2}(n)+h_{2,n}(x), \ \ldots, \ p_{\vec{t}_k}(n)+h_{k,n}(x) \big),\end{aligned}
\end{align}
where we define, for $1\leq m\leq n-1$, $h_{0,m} \defeq h\circ T^m$, and, for $1\leq j\leq k,$
\begin{align*}
    h_{j,i} &\defeq \begin{cases} 0 &  0\leq i\leq j-1 \\ \sum_{m=0}^{i-1}h_{j-1,m} &   j\leq i\leq n \end{cases} \ \text{ and } \
    p_{\vec{t}_j}(n) \defeq \sum_{i=0}^{j-1} \binom{n}{i} t_{j-i}.
\end{align*}
Note that $h_{1,n}$ is equal to $h_n$, which is previously established notation that we will continue to use.

The proof of \cref{maintheorem_combinatorial_form} requires two lemmas.  The first describes a sequence with an almost periodic $k^{\text{th}}$ derivative as the last coordinate in the trajectory of a point in a system of the form $(X \times \T^k, T_{h,\id,\ldots,\id})$.  The second demonstrates that the minimality of systems of the form $(X \times \T^k, T_{h,\id,\ldots,\id})$ depends only on the minimality of the initial skew-product system $(X \times \T, T_{h})$.

\begin{lemma}
\label{claim_bohr_ap_form_for_iterated_difference}
Let $f: \Z \to \T$ and $k \in \N$.  If $\Delta_1^k f$ is almost periodic, then there exists a minimal equicontinuous system $(X,T)$, a continuous function $h: X \to \T$, and a point $x \in X$ such that for all $n \in \N\cup\{0\}$, $\Delta_1^k f(n) =h(T^n x)$.  Moreover, the points $t_i \defeq \Delta_1^{k-i}f(0) \in \T$, $1 \leq i \leq k$, are such that for all $n \in \N\cup\{0\}$,
\begin{align}
\label{eqn_f_expressed_as_last_coord_in_iterated_skew}
    f(n)=\pi_{k+1}\Big(T_{h,\id,\ldots,\id}^n \big(x,t_1, \ldots, t_k \big)\Big).
\end{align}
\end{lemma}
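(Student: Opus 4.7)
The proof naturally divides into two parts. For the first claim, I would apply Lemma~\ref{lemma_equicontinuous_observables_are_ap} (in its $\T$-valued form) to the Bohr almost periodic sequence $\Delta_1^k f$. This directly yields a minimal equicontinuous system $(X,T)$, a point $x \in X$, and a continuous function $h: X \to \T$ such that $\Delta_1^k f(n) = h(T^n x)$ for all $n \geq 0$.

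For the second claim, formula \eqref{eqn_skew_map_calculation_for_iterated_id} identifies the last coordinate of $T_{h,\id,\ldots,\id}^n(x, t_1, \ldots, t_k)$ as $p_{\vec{t}_k}(n) + h_{k,n}(x)$. With the choice $t_i = \Delta_1^{k-i} f(0)$, reindexing the sum defining $p_{\vec{t}_k}$ gives $p_{\vec{t}_k}(n) = \sum_{i=0}^{k-1} \binom{n}{i}\, \Delta_1^i f(0)$. Next, unfolding the recursion $h_{j,i} = \sum_{m=0}^{i-1} h_{j-1,m}$ (with $h_{0,m} = h(T^m x)$) expresses $h_{k,n}(x)$ as a sum over strictly decreasing chains $n > i_1 > \cdots > i_k \geq 0$; counting those chains with $i_k = m$ gives $h_{k,n}(x) = \sum_{m=0}^{n-1} \binom{n-1-m}{k-1}\, h(T^m x) = \sum_{m=0}^{n-1} \binom{n-1-m}{k-1}\, \Delta_1^k f(m)$. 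Thus the identity \eqref{eqn_f_expressed_as_last_coord_in_iterated_skew} reduces to the Newton--Gregory finite-difference expansion $f(n) = \sum_{i=0}^{k-1} \binom{n}{i}\, \Delta_1^i f(0) + \sum_{m=0}^{n-1} \binom{n-1-m}{k-1}\, \Delta_1^k f(m)$.

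This last identity I would prove by induction on $k$. The base case $k=1$ is simply telescoping: $f(n) = f(0) + \sum_{m=0}^{n-1} \Delta_1 f(m)$. For the inductive step from $k-1$ to $k$, I would substitute $\Delta_1^{k-1} f(m) = \Delta_1^{k-1} f(0) + \sum_{j=0}^{m-1} \Delta_1^k f(j)$ into the formula at level $k-1$, swap the order of summation in the resulting double sum, and apply the hockey-stick identity $\sum_{\ell=0}^{s} \binom{\ell}{k-2} = \binom{s+1}{k-1}$ to collapse the inner sums into the binomial coefficients $\binom{n}{k-1}$ and $\binom{n-1-j}{k-1}$ required at level $k$. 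No step should present a serious conceptual obstacle; the main care is in aligning the combinatorial bookkeeping between the binomial coefficients in $p_{\vec{t}_k}$ and $h_{k,n}$, which the reversed indexing $t_i = \Delta_1^{k-i} f(0)$ is chosen precisely to accommodate.
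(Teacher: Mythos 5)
Your proof is correct, and the overall strategy matches the paper's (apply \cref{lemma_equicontinuous_observables_are_ap} for the first claim, then induct on $k$ for the second), but the organization of the induction is genuinely different. The paper inducts directly on the dynamical identity \eqref{eqn_f_expressed_as_last_coord_in_iterated_skew}: it applies the level-$(k-1)$ hypothesis to $\Delta_1 f$ rather than to $f$, then sums and reuses the skew-product formula \eqref{eqn_skew_map_calculation_for_iterated_id} together with a recursion among the polynomials $p_{\vec t_j}$ (equation \eqref{eqn_step_up_eqn_for_polynomials}); the hockey-stick identity is implicit in that polynomial step. You instead first derive closed forms for both $p_{\vec t_k}(n)=\sum_{i=0}^{k-1}\binom{n}{i}\Delta_1^i f(0)$ and $h_{k,n}(x)=\sum_{m=0}^{n-1}\binom{n-1-m}{k-1}h(T^m x)$ (the chain-counting argument for the latter is correct and worth writing out carefully: unfolding the nested sums gives decreasing chains $n>i_1>\cdots>i_k\geq 0$, and $\binom{n-1-m}{k-1}$ counts those with $i_k=m$), and thereby decouple the dynamics from the combinatorics, reducing the whole claim to the Newton--Gregory expansion of $f$ in terms of its forward differences. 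Your induction on that identity, via substituting $\Delta_1^{k-1}f(m)=\Delta_1^{k-1}f(0)+\sum_{j<m}\Delta_1^k f(j)$, swapping sums, and hockey-sticking, is a clean, standard finite-difference argument. The trade-off: the paper avoids the up-front chain-counting derivation of $h_{k,n}$ but has a less transparent intermediate step \eqref{eqn_step_up_eqn_for_polynomials}; your version isolates a classical identity at the cost of one extra combinatorial lemma. Both are sound and about equal in total effort.
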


\begin{proof}
The statement in \cref{lemma_equicontinuous_observables_are_ap} continues to hold when both instances of $\N$ are replaced by $\Z$, noting that equicontinuous systems are invertible. Thus, if $\Delta_1^k f$ is almost periodic, the first assertion in the statement of the lemma follows immediately from \cref{lemma_equicontinuous_observables_are_ap}.

For $1 \leq j \leq k$, define $t_{j,f}^{(k)} \defeq \Delta_1^{k-j}f(0)$.  We will prove \eqref{eqn_f_expressed_as_last_coord_in_iterated_skew} by induction on $k$, proving that if for all $n \in \N\cup\{0\}$, $\Delta_1^{k}f(n) = h(T^nx)$, then for all $n\in\N\cup\{0\},$
\begin{align}
\label{eqn_induction_to_show_f_sequence_in_last_coord}
    f(n)=\pi_{k+1}\Big(T_{h,\id,\ldots,\id}^n \big(x,t_{1,f}^{(k)},\ldots,t_{k,f}^{(k)} \big)\Big).
\end{align}

When $k=1,$ summing the relation
\[f(i+1)-f(i) = \Delta_1 f(i)=h(T^{i}x)\] 
on $i$ from $0$ to $n-1$, we see that 
\[f(n)=f(0)+\sum_{i=0}^{n-1}h(T^{i}x) = t_{1,f}^{(1)}+h_{1,n}(x)=\pi_2\Big(T^n_{h}\big(x,t_{1,f}^{(1)} \big)\Big).\]
This verifies \eqref{eqn_induction_to_show_f_sequence_in_last_coord} in the base case.

Assume now that \eqref{eqn_induction_to_show_f_sequence_in_last_coord} holds for some $k \in \N$ and that for all $n \in \N\cup\{0\}$, $\Delta^{k+1}_1 f(n)=h(T^n x)$. Since $\Delta_1^{k+1} = \Delta_1^k \Delta_1 f$, we can appeal to the induction hypothesis for the function $\Delta_1 f$ to see that for all $n \in \N\cup\{0\}$,
\[\Delta_1f(n) = \pi_{k+1}\Big(T^n_{h,\id,\ldots,\id}\big(x,t_{1,\Delta_1f}^{(k)},\ldots,t_{k,\Delta_1f}^{(k)} \big)\Big).\]
Note that $t_{j,\Delta_1f}^{(k)}=\Delta^{k-j}_1 \Delta_1 f(0)=t_{j,f}^{(k+1)}$. Recall that $\vec{t}_{j}=(t_1,\ldots,t_j)$ and, in analogy, define $\vec{t}_{j,f}^{\,(k+1)}= \big(t_{1,f}^{(k+1)},\ldots,t_{j,f}^{(k+1)} \big)$. By the definition of the polynomials $p_{\vec{t}_j}$,
\begin{align}
    \label{eqn_step_up_eqn_for_polynomials}
    \begin{aligned}
    \sum_{i=0}^{n-1}p_{\vec{t}_{k,f}^{~(k+1)}}(i) &= \sum_{i=0}^{n-1}\sum_{j=0}^{k-1}\binom{i}{j}t_{k-j,f}^{(k+1)} \\
    &= \sum_{j=1}^{k}\binom{n}{j}t_{k+1-j,f}^{(k+1)} = p_{\vec{t}_{k+1,f}^{~(k+1)}}(n) -t_{k+1,f}^{(k+1)}.
    \end{aligned}
\end{align}    
 Summing the relation
\[f(i+1)-f(i)= \Delta_1f(i) = \pi_{k+1}\Big(T^i_{h,\id,\ldots,\id}\big(x,t_{1,f}^{(k+1)},\ldots,t_{k,f}^{(k+1)} \big)\Big)\]
on $i$ from $0$ to $n-1$ and appealing twice to \eqref{eqn_skew_map_calculation_for_iterated_id} and once to \eqref{eqn_step_up_eqn_for_polynomials}, we see that
\begin{align*}
f(n) & = f(0)+\sum_{i=0}^{n-1}\pi_{k+1}\Big(T^n_{h,\id,\ldots,\id}\big(x,t_{1,f}^{(k+1)},\ldots,t_{k,f}^{(k+1)} \big)\Big)\\
& = \Delta_1^0 f(0)+\sum_{i=0}^{n-1}\Big(p_{\vec{t}_{k,f}^{\,(k+1)}}(i)+h_{k,i}(x)\Big)\\
& = t_{k+1,f}^{(k+1)}+\sum_{i=0}^{n-1}p_{\vec{t}_{k,f}^{\,(k+1)}}(i)+h_{k+1,n}(x)\\
& = p_{\vec{t}_{k+1,f}^{\,(k+1)}}(n)+h_{k+1,n}(x)\\
& = \pi_{k+2}\Big(T^n_{h,\id,\ldots,\id}\big(x,t_{1,f}^{(k+1)},\ldots,t_{k+1,f}^{(k+1)}\big)\Big).
\end{align*}
This verifies the inductive step, proving \eqref{eqn_induction_to_show_f_sequence_in_last_coord} and concluding the proof of the lemma.
\end{proof}

\begin{lemma}
\label{lemma_minimality_of_higher_skew_given_base_minimality}
Let $(X, T)$ be a system, and let $h: X \to \T$ be continuous.  If the skew-product system $(X \times \T, T_h)$ is minimal, then for all $k \in \N$, the skew-product system $(X \times \T^{k}, T_{h, \id, \ldots, \id})$ is minimal.
\end{lemma}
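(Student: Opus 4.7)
I proceed by induction on $k$, writing $S_k := T_{h,\id,\ldots,\id}$ on $X \times \T^k$. The base case $k=1$ is the hypothesis. For the inductive step, I assume $S_{k-1}$ is minimal on $X \times \T^{k-1}$ and view $(X \times \T^k, S_k)$ as a skew-product extension of $(X \times \T^{k-1}, S_{k-1})$ by the compact abelian group $\T$ with cocycle $\phi(x, t_1, \ldots, t_{k-1}) = t_{k-1}$.

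\textbf{Key tool.} I invoke the Anzai/Furstenberg structure theorem for non-minimal compact-group extensions: if $S_k$ is not minimal, then there exist a proper closed subgroup $H \leq \T$ (necessarily finite cyclic, say of order $\ell \geq 1$) and a continuous $g \colon X \times \T^{k-1} \to \T/H$ satisfying $\phi(y) \equiv g(S_{k-1}y) - g(y) \pmod H$. This is obtained by taking a minimal subset $Z \subseteq X \times \T^k$, letting $H$ be its stabilizer under the $\T$-action on the last coordinate (which commutes with $S_k$), and using minimality of $S_{k-1}$ to show that $Z/H$ is the graph of a continuous function on all of $X \times \T^{k-1}$. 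Composing with the isomorphism $\T/H \cong \T$ given by multiplication by $\ell$ and setting $F := e^{-2\pi i \ell g}$ rephrases this as the existence of a continuous $F \colon X \times \T^{k-1} \to S^1$ satisfying $F \circ S_{k-1} = e^{-2\pi i \ell t_{k-1}} \cdot F$.

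\textbf{Fourier contradiction.} To rule out this eigenequation, I Fourier-expand in the fiber variables, $F(x, \vec t) = \sum_{\vec n \in \Z^{k-1}} F_{\vec n}(x) e^{2\pi i \vec n \cdot \vec t}$, substitute, unfold the shift $(t_1 + h(x), t_1 + t_2, \ldots, t_{k-2} + t_{k-1})$, and match coefficients of $e^{2\pi i \vec j \cdot \vec t}$ for each $\vec j \in \Z^{k-1}$. This produces a relation $F_{\Phi(\vec j)}(Tx)\, e^{2\pi i n_1(\vec j) h(x)} = F_{\Psi(\vec j)}(x)$, where $\Phi$ and $\Psi$ are explicit bijections of $\Z^{k-1}$ (with $\Phi(\vec j)_i = \sum_{s=i}^{k-1}(-1)^{s-i}j_s$ and $\Psi(\vec j) = (j_1, \ldots, j_{k-2}, j_{k-1} + \ell)$). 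Taking $L^2(\mu)$-norms for any $T$-invariant Borel probability measure $\mu$ on $X$ (which exists by Krylov--Bogolyubov, noting that $(X,T)$ is minimal as a factor of $(X \times \T, T_h)$) and using $T$-invariance yields $\|F_{\vec m}\|_{L^2(\mu)} = \|F_{\Theta \vec m}\|_{L^2(\mu)}$ for the bijection $\Theta := \Psi \circ \Phi^{-1}$. A short computation gives $\Theta(\vec m) = (m_1 + m_2, \ldots, m_{k-2} + m_{k-1}, m_{k-1} + \ell)$, so the last coordinate of $\Theta^n \vec m$ tends to $+\infty$ and every forward $\Theta$-orbit is infinite. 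Parseval with respect to $\mu$ times Haar measure on $\T^{k-1}$ then forces $\|F_{\vec m}\|_{L^2(\mu)} = 0$ for all $\vec m$, contradicting $|F| \equiv 1$.

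\textbf{Main obstacle.} The principal technical issue is justifying the structural description of non-minimality at the start -- producing the subgroup $H$ and the continuous graph $g$ from a minimal closed invariant subset. This requires the classical orbit-closure/stabilizer analysis on the fiber bundle $X \times \T^k \to X \times \T^{k-1}$, and uses minimality of $S_{k-1}$ crucially to ensure the graph is globally defined and continuous. Once that structural step is in hand, the Fourier recursion and the infinitude of $\Theta$-orbits are essentially bookkeeping.
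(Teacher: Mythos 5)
Your proof is correct, but it takes a genuinely different route from the paper's. The paper first reduces to $k=2$ by re-factoring the tower: for $k\geq 3$ it rewrites $(X\times\T^k, T_{h,\id,\ldots,\id})$ as a double skew product $(X'\times\T^2, T'_{h',\id})$ over $X'=X\times\T^{k-2}$, so the inductive hypothesis (minimality one level down) together with the $k=2$ case closes the induction. The $k=2$ case is then handled by a purely topological orbit-closure argument: fixing $(x_0,0,0)$ and letting $Z$ be its orbit closure, the paper proves a chain of four statements showing that $Z$ contains, successively, an $\eps$-dense subset of a $\T$-fiber, a full $\T$-fiber, an $\eps$-dense subset of a $\T^2$-fiber, and finally a full $\T^2$-fiber; the key steps use sequential compactness, rationally independent rotations, and the fact that applying a large power of $T_{h,\id}$ to a one-dimensional fiber $\{x_1\}\times\T\times\{z_1\}$ produces an $\eps$-dense subset of $\T^2$ because the image contains a steep line $\{(y,ny)\ |\ y\in\T\}$. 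You instead treat each level as a $\T$-extension of the one below, invoke the group-extension structure theorem to convert non-minimality into a continuous eigenfunction equation $F\circ S_{k-1}=e^{-2\pi i\ell t_{k-1}}F$ with $\ell\geq 1$, and kill it by Fourier expansion in the fiber coordinates against a $T$-invariant product measure; the decisive observation that your index bijection $\Theta$ on $\Z^{k-1}$ increases the last coordinate by $\ell$ at each step (so every forward orbit is infinite) then gives the Parseval contradiction. Your argument is more systematic and would extend with little extra effort to rule out even \emph{measurable} solutions of the eigenequation, at the cost of invoking a non-trivial structure theorem plus measure theory and harmonic analysis; the paper's argument is more hands-on but remains entirely within elementary topological dynamics and avoids any reliance on invariant measures.
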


\begin{proof}
The $k=1$ case is tautologically true. For the remaining cases, it suffices by induction on $k$ to show the $k = 2$ case: the system $(X \times \T^2, T_{h, \id})$ is minimal.  To see why, note that for $k \geq 3$, the system $(X \times \T^{k}, \allowbreak T_{h, \id, \ldots, \id})$ can be written as $(X' \times \T^2, T'_{h', \id})$, where $X' = X \times \T^{k-2}$, $T' = T_{h, \id, \ldots, \id}$ (where the identity skewing map is repeated $k-3$ many times), and $h': X' \to \T$ is defined by $h'(x, t_1, \ldots, t_{k-2}) = t_{k-2}$.  The induction hypothesis gives the minimality of the system $(X' \times \T, T'_{h'})$, and then the $k=2$ argument below gives the minimality of $(X' \times \T^2, T'_{h', \id})$.

To see that $(X \times \T^2, T_{h, \id})$ is minimal, fix $x_0 \in X $, and put \[Z = \overline{ \big\{ T_{h,\id}^n (x_0,0,0) \ | \ n \in \N \big\} }.\]  We will show that $Z = X \times \T^2$.  This proves that $(X \times \T^2, T_{h, \id})$ is minimal.  Indeed, because the system $(X \times \T, T_h)$ is minimal, any minimal subsystem $Z'$ of $(X \times \T^2, T_{h, \id})$ contains a point of the form $(x_0,0,s)$, $s \in \T$.  Applying the automorphism $\psi: (x,t_1,t_2) \mapsto (x,t_1,t_2 - s)$ to $Z'$, we find that $\psi Z'$ contains the point $(x_0,0,0)$ and hence $\psi Z'=X \times \T^2$.  It follows that $Z' = X \times \T^2$.

Consider the following statements:
\begin{enumerate}
    \item \label{item_minimalityofmultipleskew_one} for all $\eps > 0$, there exists $x_1 \in X$, an $\eps$-dense set $Y \subseteq \T$, and $z_1 \in \T$ such that $\{x_1\} \times Y \times \{z_1\} \subseteq Z$;
    \item \label{item_minimalityofmultipleskew_two} there exists $x_1 \in X$ and $z_1 \in \T$ such that $\{x_1\} \times \T \times \{z_1\} \subseteq Z$;
    \item \label{item_minimalityofmultipleskew_three} for all $\eps > 0$, there exists $x_1 \in X$ and an $\eps$-dense set $W \subseteq \T^2$ such that $\{x_1\} \times W \subseteq Z$;
    \item \label{item_minimalityofmultipleskew_four} there exists $x_1 \in X$ such that $\{x_1\} \times \T^2 \subseteq Z$.
\end{enumerate}
We will prove \eqref{item_minimalityofmultipleskew_one}, then show that \eqref{item_minimalityofmultipleskew_one} implies \eqref{item_minimalityofmultipleskew_two} implies \eqref{item_minimalityofmultipleskew_three} implies \eqref{item_minimalityofmultipleskew_four}. Because $(X \times \T, T_h)$ is minimal, the system $(X,T)$ is minimal, and so \eqref{item_minimalityofmultipleskew_four} implies that $Z = X \times \T^2$, concluding the proof of the lemma.

To prove that \eqref{item_minimalityofmultipleskew_one} holds, let $\eps > 0$. Let $\alpha_1, \ldots, \alpha_N \in \T$ be such that the set $\{1, \alpha_1, \ldots, \alpha_N\}$ is linearly independent over $\Q$ and the set $\{\alpha_1, \ldots, \alpha_N\}$ is $\eps$-dense in $\T$.  Because $(X \times \T, T_h)$ is minimal, for each $i \in \{1, \ldots, N\}$, there exists $t_i \in \T$ such that $(x_0,\alpha_i, t_i) \in Z$. Since $\{1, \alpha_1, \ldots, \alpha_N\}$ is linearly independent over $\Q$, there exists $(n_\ell)_{\ell =1}^\infty \subseteq \N$ such that
\[\lim_{\ell \to \infty} n_\ell \big( \alpha_1, \ldots, \alpha_N \big) = (-t_1, \ldots, -t_N) \in \T^N.\]

Using the sequential compactness of $X$ and $\T$, by passing to a subsequence of $(n_\ell)_{\ell =1}^\infty$, we may assume without loss of generality that there exists $x_1 \in X$ and $y_1, z_1 \in \T$ such that for all $i \in \{1, \ldots, N\}$, $\lim_{\ell \to \infty} T_h^{n_\ell} (x_0, \alpha_i) = (x_1, y_1 + \alpha_i)$, and such that $\lim_{\ell \to \infty} \sum_{j=0}^{n_\ell - 1} h_j(x_0) =  z_1$. Note then that for $i \in \{1, \ldots, N\}$,
\begin{align*}
    \lim_{\ell \to \infty} T^{n_\ell}_{h,\id} (x_0, \alpha_i, t_i) &= \lim_{\ell \to \infty}\left( T^{n_\ell}_h (x_0, \alpha_i), t_i + n_\ell \alpha_i + \sum_{j=0}^{n_\ell-1} h_j(x_0) \right)\\
    &= (x_1, y_1 + \alpha_i, z_1).
\end{align*}
Because $(x_0, \alpha_i, t_i) \in Z$ and $Z$ is closed and $T_{h,\id}$-invariant, we get that $(x_1, y_1 + \alpha_i, z_1) \in Z$.  Defining $Y = y_1 + \{\alpha_1, \ldots, \alpha_N\}$, we see that $\{x_1\} \times Y \times \{z_1\} \subseteq Z$, verifying \eqref{item_minimalityofmultipleskew_one}.

By the compactness of $X$, $\T$, and $Z$, it is immediate that \eqref{item_minimalityofmultipleskew_one} implies \eqref{item_minimalityofmultipleskew_two} and that \eqref{item_minimalityofmultipleskew_three} implies \eqref{item_minimalityofmultipleskew_four}.  Thus, we have only left to show that \eqref{item_minimalityofmultipleskew_two} implies \eqref{item_minimalityofmultipleskew_three}.

Suppose that \eqref{item_minimalityofmultipleskew_two} holds: there exists $x_1 \in X$ and $z_1 \in \T$ such that $\{x_1\} \times \T \times \{z_1\} \subseteq Z$.  Let $\eps > 0$.  Note that for $n \in \N$ and $y \in \T$, because $Z$ is $T_{h,\id}$-invariant,
\begin{align*}
    T^{n}_{h,\id} (x_1, y, z_1) = \left( T^n x_1, y + h_n(x_1), z_1 + ny + \sum_{i=0}^{n-1} h_i(x_1) \right) \in Z.
\end{align*}
Let $n \in \N$ be sufficiently large so that $\big\{ (y, ny) \in \T^2 \ \big | \ y \in \T \big\}$ is $\eps$-dense in $\T^2$. Recalling now that $x_1$, $z_1$, and $n$ are fixed, define
\[W \defeq \left\{ \left(y + h_n(x_1), ny + z_1 + \sum_{i=0}^{n-1} h_i(x_1) \right) \in \T^2 \ \middle | \ y \in \T \right\},\]
and note that $W$ is $\eps$-dense in $\T^2$.  It follows from the calculation of $T^{n}_{h,\id} (x_1, y, z_1)$ above that $\{T^n x_1\} \times W \subseteq Z$, verifying \eqref{item_minimalityofmultipleskew_three}, and finishing the proof of the lemma.
\end{proof}

\begin{proof}[Proof of \cref{maintheorem_combinatorial_form}]
Let $k \in \N \cup \{0\}$ be such that $\Delta_1^k f$ is almost periodic, and let $\eps > 0$.  If $k = 0$, then $\Delta_1^0 f = f$ is almost periodic.  The set $A$ contains the set of $\eps$-almost periods of $f$, hence $A$ is a \bz{} set.  It follows immediately that $A$ is syndetic and that $A-A$ contains a Bohr neighborhood of zero.

Suppose $k \geq 1$.  Since the conclusion of the theorem does not make reference to $k$, we may assume without loss of generality that $k$ is the least positive integer such that $\Delta_1^k f$ is almost periodic; in other words, we may assume that $\Delta_1^{k-1} f$ is not almost periodic.

By \cref{claim_bohr_ap_form_for_iterated_difference}, there exists a minimal equicontinuous system $(X,T)$, a continuous map $h: X \to \T$, and a point $x \in X$ such that for all $n \in \N\cup\{0\}$, $\Delta_1^k f(n) =h(T^n x)$.  Moreover, the points $t_i \defeq \Delta_1^{k-i}f(0) \in \T$, $1 \leq i \leq k$, are such that for all $n \in \N\cup\{0\}$, the equality in \eqref{eqn_f_expressed_as_last_coord_in_iterated_skew} holds.

Summing $\Delta_1^k f(n) =h(T^n x)$, we see that for all $n \in \N\cup\{0\}$,
\[\Delta_1^{k-1}f(n) - t_1 = \sum_{i = 0}^{n-1} \Delta_1^k f(i) = \sum_{i = 0}^{n-1} h(T^i x) = h_n(x).\]
Therefore, for all $n \in \N\cup\{0\}$,
\[\Delta_1^{k-1}f(n) = t_1 + h_n(x) = \pi_2 \big( T^n_h (x,t_1) \big).\]
This calculation shows that if the system $(X \times \T, T_h)$ were equicontinuous, the sequence $\Delta_1^{k-1}f$ would be almost periodic. Since $\Delta_1^{k-1}f$ is not almost periodic, the system $(X \times \T, T_h)$ is not equicontinuous.  It follows by \cref{theorem_not_minimal_implies_equicontinuous} that the system $(X \times \T, T_h)$ is minimal, and then it follows by \cref{lemma_minimality_of_higher_skew_given_base_minimality} that the system $(X \times \T^k, T_{h, \id, \ldots, \id})$ is minimal.  

Since the system $(X \times \T^k, T_{h, \id, \ldots, \id})$ is minimal, the set
\[B \defeq \big\{ n \in \N \ \big| \ d_{X \times \T^k} \big(T^n_{h,\id,\ldots,\id}(x,t_1,\ldots,t_k),(x,t_1,\ldots,t_k) \big) < \eps \big\}\]
is syndetic.  Since \eqref{eqn_f_expressed_as_last_coord_in_iterated_skew} holds and $t_k = f(0)$, we see that $B \subseteq A$, proving that the set $A$ is syndetic.

Let $U$ be the open $\eps$-ball about the point $(x,t_1, \ldots, t_k)$ in $X \times \T^k$. Since the system $(X \times \T^k, T_{h, \id, \ldots, \id})$ is minimal, it follows by a standard lemma (see, for example, \cite[Lemma 4.4]{boshernitzanglasner2009}) that
\[\big\{ n \in \N \ \big| \ U \cap T_{h, \id, \ldots, \id}^{-n} U \neq \emptyset \big\} = B - B.\]
It follows then from Lemma 2.12 that there exists $\delta > 0$ such that $\dyret_\delta(X \times \T^k, T_{h, \id, \ldots, \id}) \subseteq B-B$.  By Theorem A, there is a positive answer to Katznelson's Question for the system $(X \times \T^k, T_{h, \id, \ldots, \id})$: the set $\dyret_\delta(X \times \T^k, T_{h, \id, \ldots, \id})$ is a \bz{} set.  It follows that $B-B$, and hence $A-A$, is a \bz{} set, as was to be shown.
\end{proof}

\begin{example}
\label{example_application_of_thm_c}
Here's an example application of \cref{maintheorem_combinatorial_form} that doesn't seem to follow easily by other means.  Define $\varphi: \N \cup \{0\} \to \R$ by
\[\varphi(n) = \sum_{i=0}^\infty \frac{d_i(n)}{2^i},\]
where $d_i(n) \in \{0,1\}$ is the $i^{\text{th}}$ least-significant digit of $n$ in binary.  We claim that for all $\eps > 0$, the set
\[A \defeq \big\{ n \in \N \ \big| \ \| \varphi(1) + \cdots + \varphi(n) \| < \eps \big\}\]
is syndetic and is such that $A-A$ is a \bz{} set.  To see how this follows from \cref{maintheorem_combinatorial_form}, note that $\varphi$ is uniformly continuous with respect to the 2-adic metric on $\Z$, hence it extends to a continuous function $\varphi: \Z_2 \to \R$, where $\Z_2$ denotes the $2$-adic integers. Defining $T: \Z_2 \to \Z_2$ to be addition by 1, we see that the sequence $n \mapsto \varphi(n) = \varphi(T^n 0)$ is almost periodic.  Define $f: \N \cup \{0\} \to \R$ by $f(n) = \sum_{i=0}^{n-1} \varphi(i)$, so that $\Delta_1 f = \varphi$ is almost periodic. Invoking \cref{maintheorem_combinatorial_form} for the function $\pi \circ f$, where $\pi: \R \to \T$ is the quotient map, we see that $A$ is syndetic and that $A-A$ is a \bz{} set.
\end{example}

\section{Open questions}
\label{sec_open_questions}

The open questions in this section are split between those that are natural extensions of results in this work (in \cref{sec_next_steps}) and those that are closely related to the combinatorial form of Katznelson's Question (in \cref{sec_katznelson_adjacent}).

\subsection{Next steps}
\label{sec_next_steps}

The result in
\cref{maintheorem_katznelson_for_skew_towers} leads one naturally to ask whether more general extensions of equicontinuous systems have \bz{} large returns. We record two questions in this vein here.

\begin{question}
\label{question_basic_skew_over_2_step_nil}
Let $\alpha \in \T \setminus \Q$ and $h: \T \to \T$ be continuous. Does the skew product system $(\T^3, T)$ given by
\begin{align}
\label{eqn_two_skew_system}
    T(x,y,z) = \big(x+ \alpha, y+x, z + h(x) \big)
\end{align}
have \bz{} large returns?
\end{question}

Because the skewing function $x \mapsto (x, h(x))$ from the system in \eqref{eqn_two_skew_system} maps into $\T^2$, the intermediate value theorem technique at the heart of \cref{maintheorem_katznelson_for_skew_towers} does not seem to help in answering \cref{question_basic_skew_over_2_step_nil}.  More generally, one can ask about skew products by the 2-torus over equicontinuous systems.  Analogously, we can ask whether or not the result in \cref{maintheorem_combinatorial_form} continues to hold when $\T$ is replaced by $\T^2$.

In the following question, by an \emph{automorphism of $(X,T)$}, we mean a homeomorphism $\varphi: X \to X$ such that $\varphi \circ T = T \circ \varphi$.  If a compact group $K$ acts on $(X,T)$ by automorphisms, the \emph{quotient system} is composed of the set of equivalence classes $X/K \defeq \{Kx \ | \ x \in X\}$, a compact metric space, and the map $T$, which descends to a continuous self map of $X / K$.

\begin{question}
\label{question_more_general_isometric_ext}
Let $(X,T)$ be a minimal system.  Suppose that $\T$ acts on $(X,T)$ by automorphisms in such a way that the quotient system $(X / \T, T)$ is equicontinuous.  Does the system $(X,T)$ have \bz{} large returns?
\end{question}

A negative answer to either of the previous questions would yield a negative answer to the Katznelson's Question.  Positive answers, on the other hand, would represent significant progress in the topological-dynamical, structure-theoretic approach to resolving it.

\begin{question}
\label{question_3_syndetic_sets}
Is it true that for all $3$-colorings $\N = A_1 \cup A_2 \cup A_3$, the set \[(A_1 - A_1)\cup(A_2 - A_2)\cup(A_3 - A_3)\]
is a \bz{} set?
\end{question}

In the same way as \cref{cor_differences_of_a_two_syndetic_set} follows from \cref{theorem_two_coloring_Katznelson}, a positive answer to \cref{question_3_syndetic_sets} would imply that the set of pairwise differences of a 3-syndetic set is a \bz{} set.

It is also natural to ask for analogues of the sequential form of Katznelson's Question in which we consider more general notions of almost periodicity.  A sequence $f: \Z \to \R$ is \emph{Besicovitch almost periodic} if for all $\eps > 0$, there exists an almost periodic sequence $a: \Z \to \R$ such that
\[\limsup_{N \to \infty} \frac 1{2N+1} \sum_{n=-N}^N \big| f(n) - a(n) \big| < \eps.\]
It is a short exercise to show that if $f$ is Besicovitch almost periodic, then 
\begin{align}
\label{eqn_set_in_sequences_question_BAP}
    \big\{ m \in \Z \ | \ \inf_{n \in \Z} \| f(n+m) - f(n) \| < \eps \big\}
\end{align}
contains a Bohr neighborhood of zero. Is the same true for sequences whose first derivatives are Besicovitch almost periodic?

\begin{question}
\label{question_bap_sequence_extension}
If $f: \Z \to \R$ is such that $\Delta_1 f$ is Besicovitch almost periodic, does the set in \eqref{eqn_set_in_sequences_question_BAP} contain a Bohr neighborhood of zero?
More generally, if $\Delta_1^k f$ is Besicovitch almost periodic for some $k\in\N$, then does the set in \eqref{eqn_set_in_sequences_question_BAP} contain a Bohr neighborhood of zero?
\end{question}

\subsection{Closely related questions}
\label{sec_katznelson_adjacent}

The question was raised in \cite[Problem 2.2]{ruzsa_difference_sets_unpublished_1985} (and again more recently in \cite{bergelson_ruzsa_2009}) as to the existence of a set of positive upper asymptotic density whose set of differences does not contain a Bohr set. Griesmer \cite{griesmer_separating_bohr_denseness_2020} showed that such sets do exist.  The following question is an analogue for syndetic sets that remains unanswered.

\begin{question}
\label{question_differences_contain_bohr}
If $A \subseteq \N$ is syndetic, is the set $A-A$ a Bohr set?
\end{question}

An affirmative answer to the combinatorial form of Katznelson's Question clearly yields an affirmative answer to \cref{question_differences_contain_bohr}, but we were not able to answer Katznelson's Question by assuming a positive answer to \cref{question_differences_contain_bohr}.

An inhomogeneous result achieved in \cite{BFW2006} gives that the set $A-B$ is piecewise Bohr (a Bohr set intersected with a set containing arbitrarily long intervals) when $A$ and $B$ have positive upper Banach density.  Along those lines, it is natural to ask about an asymmetric result for differences of syndetic sets.  Interestingly, \cref{question_differences_contain_bohr} is equivalent to the following asymmetric one:
\begin{quote}
\emph{If $A, B \subseteq \N$ are syndetic, is the set $A-B$ a Bohr set?}
\end{quote}
To see that this is the same question as \cref{question_differences_contain_bohr}, it is easiest to see that both questions are equivalent to a third:
\begin{quote}
    \emph{If $A \subseteq \N$ is piecewise syndetic, is the set $A-A$ a Bohr set?}
\end{quote}
This question is equivalent to \cref{question_differences_contain_bohr} by the same reasoning that shows that Questions C1 and C2 are equivalent.  Clearly a positive answer to the inhomogeneous question yields a positive answer to \cref{question_differences_contain_bohr}.  Conversely, suppose $A$ and $B$ are syndetic.  Since $B$ is syndetic, there exists $k \in \N$ such that $A \subseteq \cup_{i=1}^k \big(A \cap (B-i) \big)$.  By the partition regularity of piecewise syndeticity, there is $i \in \{1, \ldots, k\}$ such that $A \cap (B-i)$ is piecewise syndetic.  If the set of differences of $A \cap (B-i)$ contains a Bohr set, then so does the set $A-B$.

To the authors' knowledge, the following analogue to \cref{question_differences_contain_bohr} concerning sumsets is also open.

\begin{question}
\label{question_sumset_contains_bohr}
If $A \subseteq \N$ is syndetic, is the set $A+A$ a Bohr set?
\end{question}

The inhomogeneous version of \cref{question_sumset_contains_bohr} -- \emph{If $A, B \subseteq \N$ are syndetic, does $A+B$ contain a Bohr set?} -- is also unanswered.  It can be shown that if $C \subseteq \N$ is syndetic, then there exist $A, B \subseteq \N$ syndetic such that $A+B \subseteq C-C$. Therefore, if there is a positive answer to the inhomogeneous analogue of \cref{question_sumset_contains_bohr}, then there is a positive answer to  \cref{question_differences_contain_bohr}. Beyond this, the relationship between Questions \ref{question_differences_contain_bohr} and \ref{question_sumset_contains_bohr} is not clear.
Interestingly, if $A=\{n\in\N: \|n^2\alpha \|<\eps\}$, then $A+A$ is a Bohr set, but not a $\bz$ set.

For $A \subseteq \N$, note that $n \in A-A \text{ if and only if } A \cap (A-n) \neq \emptyset$. This relation helps to motivate the next question, a higher-order analogue of the combinatorial form of Katznelson's Question.  A \emph{nil$_k$-\bz{} set} is the set of return times of a point to a neighborhood of itself in a $k$-step nilsystem; the curious reader is pointed toward \cite{huang_shao_ye_nilbohr2016} for more information.

\begin{question}
\label{quest_higher_order_difference_sets}
If $A \subseteq \N$ is syndetic, is it true that for all $k \in \N$, the set
\[\big\{ n \in \N \ \big| \ A \cap (A-n) \cap \cdots \cap (A-kn) \neq \emptyset \big\}\]
is a nil$_k$-\bz{} set?
\end{question}

The sets appearing in \cref{quest_higher_order_difference_sets} are intimately related to times of multiple recurrence in dynamical systems, so dynamical analogues of \cref{quest_higher_order_difference_sets} can be naturally formulated in a way that parallels the relation between Question C1 and Questions D1 and D2; see \cite[Proposition 2.3.4]{huang_shao_ye_nilbohr2016}.

A set $A \subseteq \Z^d$ is \emph{syndetic} if finitely many translates of $A$ cover $\Z^d$.   The following question is a generalization of the combinatorial form of Katznelson's Question.  It can be formulated more generally in locally compact abelian groups using characters and in more general topological groups using the Bohr compactification; see \cite{landstad1971}.

\begin{question}
\label{quest_z2_katznelson}
If $A \subseteq \Z^d$ is syndetic, does its set of pairwise differences $A-A$ contain a set of the form
\[\big\{ z \in \Z^d \ \big| \ z \cdot \lambda_1, \ldots, z \cdot \lambda_k \in U \big\}\]
where $\lambda_1, \ldots, \lambda_k \in \T^d$ and $U \subseteq \T$ is an open neighborhood of 0?
\end{question}

\bibliographystyle{alpha}
\bibliography{katzbib}

\end{document}